\documentclass{amsart}

\usepackage{amssymb, mathtools, delimset}
\usepackage{enumitem, etoolbox}
\usepackage{array, booktabs, floatrow}
\usepackage[nopatch=footnote]{microtype}
\usepackage{xcolor, tikz}
\usetikzlibrary{decorations.pathreplacing}
\tikzset{
graph edge/.style={line width=0.65pt},
graph vertex/.style={circle, fill=black, inner sep=1.3pt},
segment brace/.style={decorate, decoration={brace, mirror, amplitude=3pt}}
}

\usepackage[numbers, sort&compress, nonamebreak, longnamesfirst]{natbib}
\usepackage[
backref=page,
colorlinks=true,
linkcolor=blue,
citecolor=green!40!black,
urlcolor=cyan!45!black
]{hyperref}
\usepackage{aliascnt}
\usepackage[capitalise, nameinlink]{cleveref}

\setlist{itemsep=1pt, parsep=2pt, topsep=0.5pt, partopsep=0pt, leftmargin=17pt}
\allowdisplaybreaks
\numberwithin{equation}{section}
\numberwithin{table}{section}
\numberwithin{figure}{section}
\newcommand{\cone}[1]{\langle#1\rangle}

\newtheorem{theorem}{Theorem}[section]
\newaliascnt{lemma}{theorem}
\newtheorem{lemma}[lemma]{Lemma}
\aliascntresetthe{lemma}
\newaliascnt{proposition}{theorem}
\newtheorem{proposition}[proposition]{Proposition}
\aliascntresetthe{proposition}
\newaliascnt{corollary}{theorem}
\newtheorem{corollary}[corollary]{Corollary}
\aliascntresetthe{corollary}
\newaliascnt{conjecture}{theorem}
\newtheorem{conjecture}[conjecture]{Conjecture}
\aliascntresetthe{conjecture}

\makeatletter
\patchcmd{\NAT@citexnum}
{\NAT@test{\@ne}}
{\NAT@hyper@{\NAT@test{\@ne}}}
{}{\PackageError{csf-caterpillar}{Could not link citation authors}{}}
\let\caterpillar@citexnum\NAT@citexnum
\def\NAT@citexnum[#1][#2]#3{%
\ifNAT@swa\else
\ifnum\NAT@ctype=\z@
\@tempcnta\z@
\@for\@citeb:=#3\do{\advance\@tempcnta\@ne}%
\ifnum\@tempcnta=\tw@
\def\NAT@separator{\ and\NAT@penalty}%
\def\NAT@yrsep{\ and}%
\fi
\fi
\fi
\caterpillar@citexnum[#1][#2]{#3}}
\makeatother

\makeatletter
\appto{\thm@space@setup}{%
\thm@preskip=\parskip
\thm@postskip=0pt\relax}
\newif\ifcaterpillar@inresult
\newdimen\caterpillar@proofsep
\newcommand{\caterpillar@trackresult}[1]{%
\AtBeginEnvironment{#1}{%
\caterpillar@inresulttrue
\global\caterpillar@proofsep=0pt\relax}}
\forcsvlist{\caterpillar@trackresult}
{theorem,lemma,proposition,corollary,conjecture}
\newcommand{\caterpillar@tracklist}{%
\ifcaterpillar@inresult
\ifdim\dimexpr\parskip+4pt\relax>\caterpillar@proofsep
\global\caterpillar@proofsep=\dimexpr\parskip+4pt\relax
\fi
\fi}
\AtBeginEnvironment{enumerate}{\caterpillar@tracklist}
\expandafter\patchcmd\csname\string\proof\endcsname
{\trivlist}{%
\trivlist
\ifdim\caterpillar@proofsep>0pt
\@topsep=\caterpillar@proofsep
\else
\@topsep=3pt
\fi
\@topsepadd=0pt\relax
\global\caterpillar@proofsep=0pt\relax}
{}{\PackageError{csf-caterpillar}{Could not adjust proof spacing}{}}
\makeatother

\makeatletter
\AddToHook{begindocument/end}{%
\def\ltx@label#1{\caterpillar@displaylabel#1\@nil}}
\def\caterpillar@displaylabel{%
\@ifnextchar[\caterpillar@displaylabel@typed\caterpillar@displaylabel@plain}
\def\caterpillar@displaylabel@typed[#1]#2\@nil{\cref@label[#1]{#2}}
\def\caterpillar@displaylabel@plain#1\@nil{\cref@label{#1}}
\makeatother

\crefrangelabelformat{equation}{\textup{(#3#1#4)--(#5#2#6)}}
\crefformat{itm}{\upshape#2#1#3}
\crefformat{wolfgang}{#2Wolfgang's criterion#3}
\Crefformat{wolfgang}{#2Wolfgang's criterion#3}
\creflabelformat{def}{\upshape(#2#1)#3}
\crefname{def}{Def.}{Defs.}
\Crefname{def}{Def.}{Defs.}
\crefname{conjecture}{Conjecture}{Conjectures}
\Crefname{conjecture}{Conjecture}{Conjectures}
\crefname{equation}{Eq.}{Eqs.}
\Crefname{equation}{Eq.}{Eqs.}
\crefname{ineq}{Ineq.}{Ineqs.}
\Crefname{ineq}{Ineq.}{Ineqs.}
\crefname{table}{Tab.}{Tabs.}
\Crefname{table}{Tab.}{Tabs.}
\crefname{figure}{Fig.}{Figs.}
\Crefname{figure}{Fig.}{Figs.}
\crefformat{section}{#2\S#1#3}
\crefrangeformat{section}{#3\S\S#1#4--#5#2#6}
\crefmultiformat{section}{#2\S\S#1#3}{ and~#2#1#3}
{, #2#1#3}{, and~#2#1#3}
\Crefformat{section}{Section~#2#1#3}
\Crefrangeformat{section}{Sections~#3#1#4--#5#2#6}
\Crefmultiformat{section}{Sections~#2#1#3}{ and~#2#1#3}
{, #2#1#3}{, and~#2#1#3}
\crefalias{subsection}{section}

\title{\texorpdfstring{$e$}{e}-Positive caterpillars
have maximum degree at most three}
\author[D.Q.B.~Tang]{Davion Q.B. Tang}
\address[Davion Q.B. Tang]{School of Mathematics and Statistics,
Beijing Institute of Technology,
Beijing 102400,
P. R. China}
\email{davion@bit.edu.cn}

\author[D.G.L.~Wang]{David G.L. Wang\texorpdfstring{\textsuperscript{*}}{}}
\thanks{The work of David G.L. Wang was supported by the General Program of the
National Natural Science Foundation of China (Grant No.~12171034).}
\address[David G.L. Wang]{School of Mathematics and Statistics
\& Beijing Key Laboratory on MCAACI,
Beijing Institute of Technology;
MIIT Key Laboratory of Mathematical Theory and Computation in Information Security,
Beijing 102400,
P. R. China}
\email{glw@bit.edu.cn}

\author[M.M.Y.~Wang]{Monica M.Y. Wang}
\address[Monica M.Y. Wang]{School of Basic Education,
Beijing Institute of Graphic Communication,
Beijing 102600,
P. R. China}
\email{myw@bigc.edu.cn}

\subjclass[2020]{Primary 05E05; Secondary 05C05, 05C15}
\keywords{chromatic symmetric function,
\texorpdfstring{$e$}{e}-positivity,
caterpillar,
specialization,
invariant cone}
\thanks{David G.L. Wang\textsuperscript{*} is the corresponding author.}
\date{}

\makeatletter
\hypersetup{pdftitle={\@title}, pdfauthor={\authors}, pdfkeywords={\@keywords}}
\makeatother

\begin{document}
\bibliographystyle{abbrvnat}
\begin{abstract}
Dahlberg, She, and van Willigenburg conjectured that every $e$-positive tree has maximum degree at most three. We prove this conjecture for caterpillars. Our proof introduces a specialization--cone method: we evaluate chromatic symmetric functions under finite-support specializations that are nonnegative on the elementary symmetric functions, express the evaluations through transfer matrices, and certify negativity using finitely generated cones. Two specializations and three cones handle all caterpillars remaining after a connected-partition reduction, with no bound on the number or spacing of their trivalent vertices. The required cone inclusions and strict sign inequalities are certified by exact rational arithmetic. We also exhibit two infinite families of non-spider caterpillars of maximum degree four that admit connected partitions of every type, showing that Wolfgang's criterion alone leaves cases requiring further obstructions. Our result also reduces Li's criterion on $e$-positivity for caterpillars of maximum degree three to an explicit condition on the positions of trivalent vertices.
\end{abstract}

\maketitle

\section{Introduction}
\citet{Sta95} introduced the chromatic symmetric function
as a generalization of Birkhoff's chromatic polynomial.
For a finite graph~$G$,
its \emph{chromatic symmetric function} is the symmetric function
\[
X_G=\sum_f\prod_{v\in V(G)}x_{f(v)},
\]
where the sum runs over all proper colorings
$f\colon V(G)\to\mathbb Z_{>0}$ of~$G$.
It determines many basic parameters of a graph,
including its numbers of vertices,
edges,
and connected components,
as well as its chromatic number,
independence number,
matching number,
and girth;
for trees,
it also determines the degree sequence,
number of leaves,
and diameter;
see \citet[p.~238, Eq.~(5), Props.~2--3 and Cor.~5]{MMW08}.

The chromatic symmetric function also connects graph coloring
with representation theory and algebraic geometry.
For incomparability graphs of $(3+1)$-free posets,
\citet[Thm.~4]{Gas96} interpreted the Schur coefficients of~$X_G$
as counts of tableaux,
linking these graphs with representations of symmetric groups.
For naturally labeled unit interval graphs,
applying the Hall involution~$\omega$
to the chromatic quasisymmetric function introduced by \citet{SW16}
gives the graded Frobenius characteristic of Tymoczko's dot action
on the cohomology of the corresponding regular semisimple Hessenberg variety,
as proved by \citet[Thm.~129]{BC18}.

For a graph~$G$ on~$n$ vertices
and a basis $\{b_\lambda\colon \lambda\vdash n\}$
of the homogeneous component~$\Lambda_n$ of the ring of symmetric functions,
we call~$G$ \emph{$b$-positive}
if every $b$-coefficient of~$X_G$ is nonnegative.
\citet[p.~186]{Sta95} raised the question
``Which~$X_G$ are $e$-positive?''
Work on this question has established both $e$-positivity
and non-$e$-positivity for various graph classes.

On the positive side,
\citet{Hik24X} proved the \emph{Stanley--Stembridge conjecture}~\citep{SS93},
establishing $e$-positivity for incomparability graphs of $(3+1)$-free posets.
This includes \emph{unit interval graphs} and familiar examples such as
\emph{complete graphs},
\emph{paths},
\emph{twinned paths},
\emph{lariats},
\emph{lollipops},
\emph{melting lollipops},
\emph{twinned lollipops},
\emph{dumbbells},
\emph{generalized bulls},
\emph{$K$-chains},
and \emph{triangular ladders}.
Beyond this setting,
\citet[Cors.~54 and~65]{TV26-vglue} proved $e$-positivity
for vertex-glued sequences of naturally labeled unit interval graphs and cycles,
and for their cyclic closures,
encompassing
\emph{cycles},
\emph{tadpoles},
\emph{hats},
\emph{adjacent cycle-chains} (aka.~\emph{hat chains}),
\emph{kayak paddles},
\emph{clique-path-cycle graphs},
\emph{twinned cycles},
\emph{cycle-chords},
and \emph{sequential cycle-chords}.
Further instances outside the results above include
line graphs of tadpoles,
spiders of the form $S(m+1,\,m,\,1)$,
and \emph{clocks},
see \citet{WW23-JAC},
\citet[p.~2681]{DFv20},
and \citet{CHW26},
respectively.
Here a \emph{spider} $S(a_1,\dots,a_k)$ is the tree graph formed by identifying one endpoint
of each of $k$ paths;
these paths are its \emph{legs},
with positive lengths $a_1,\dots,a_k$.

On the negative side,
\citet{Sta95} observed that neither the \emph{claw}
nor the \emph{net} is $e$-positive;
see \cref{fig:intro-graphs}.
\begin{figure}[ht]
\centering
\begin{tikzpicture}[scale=0.65]
\begin{scope}[xshift=-10cm]
\draw[graph edge] (0,0.9)--(-0.9,0) (0,0.9)--(0.9,0)
(0,0.9)--(0,1.8);
\foreach \p in {(0,0.9),(-0.9,0),(0.9,0),(0,1.8)}
\node[graph vertex] at \p {};
\node at (0,-0.6) {claw};
\end{scope}
\begin{scope}[xshift=-5cm]
\draw[graph edge] (-0.7,0.2)--(0.7,0.2)--(0,1.0)--cycle;
\draw[graph edge] (-0.7,0.2)--(-1.5,0.2)
(0.7,0.2)--(1.5,0.2) (0,1.0)--(0,1.8);
\foreach \p in {(-0.7,0.2),(0.7,0.2),(0,1.0),(-1.5,0.2),(1.5,0.2),(0,1.8)}
\node[graph vertex] at \p {};
\node at (0,-0.6) {net};
\end{scope}
\begin{scope}
\draw[graph edge] (-0.7,1.8)--(0.7,1.8)--(0,0.9)--cycle;
\draw[graph edge] (0,0.9)--(-0.9,0) (0,0.9)--(0.9,0);
\foreach \p in {(-0.7,1.8),(0.7,1.8),(0,0.9),(-0.9,0),(0.9,0)}
\node[graph vertex] at \p {};
\node at (0,-0.6) {cricket};
\end{scope}
\begin{scope}[xshift=5cm]
\draw[graph edge] (-1.8,0)--(-0.9,0)--(0,0)--(0.9,0)--(1.8,0);
\draw[graph edge] (0,0)--(0,0.9)--(0,1.8);
\foreach \p in {(-1.8,0),(-0.9,0),(0,0),(0.9,0),(1.8,0),(0,0.9),(0,1.8)}
\node[graph vertex] at \p {};
\node at (0,-0.6) {$S(2,2,2)$};
\end{scope}
\begin{scope}[xshift=10cm]
\draw[graph edge] (-1.8,0)--(-0.9,0)--(0,0)--(0.9,0)--(1.8,0);
\draw[graph edge] (-0.9,0)--(-0.9,0.9) (0,0)--(0,0.9);
\foreach \p in {(-1.8,0),(-0.9,0),(0,0),(0.9,0),(1.8,0),(-0.9,0.9),(0,0.9)}
\node[graph vertex] at \p {};
\node at (0,-0.6) {$M_2$};
\end{scope}
\end{tikzpicture}
\caption{The claw,
the net,
the cricket,
and the spider~$S(2,2,2)$ are not $e$-positive,
while the non-spider caterpillar~$M_2$ is $e$-positive.}
\label{fig:intro-graphs}
\end{figure}
Known non-$e$-positivity results cover a variety of graph families.
Non-Schur-positivity is a stronger obstruction to $e$-positivity,
since every elementary symmetric function is Schur positive.
Results of~\citet{WW20,LLYZ25,LQYZ25,WZ26} cover
connected \emph{unbalanced bipartite graphs},
non-degenerate \emph{squids} other than the \emph{cricket}
(see \cref{fig:intro-graphs}),
incomparability graphs of certain distributive lattices,
and all but finitely many non-degenerate \emph{wheels},
\emph{windmills},
and \emph{fans}.
\citet[Thms.~4.5, 4.10, and~5.5]{DFv20} constructed infinite
non-$e$-positive families that cannot be contracted to a claw,
including a claw-free family.
Further examples include infinite non-$e$-positive families
in each class of $(\mathrm{claw},H)$-free graphs with
$H\in\{\mathrm{diamond},C_4,K_4,2K_2\}$,
\emph{brooms} other than paths and the spider~$S(2,1,1)$,
and \emph{double brooms} that do not degenerate to brooms;
see~\citet{FHM19,WW23-DAM}.

For trees,
\citet{DSv20} proposed the following central conjecture.
\begin{conjecture}[\citeauthor{DSv20}]
\label{conj:e-.tree}
Any $e$-positive tree has maximum degree $\Delta\le 3$.
\end{conjecture}

For a graph~$G$ and a partition $\lambda\vdash \abs{V(G)}$,
a \emph{connected partition} of~$G$ of \emph{type}~$\lambda$
is a partition of~$V(G)$ into \emph{blocks} that induce connected subgraphs
and whose sizes are the parts of~$\lambda$.
\citet[Prop.~1.3.3]{Wol97D} established the following
\emph{connected-partition criterion}.
\begin{proposition}[\citeauthor{Wol97D}]
\label[wolfgang]{prop:Wolfgang}
Let~$G$ be a connected $e$-positive graph on~$n$ vertices.
Then for any integer partition $\lambda\vdash n$,
the graph~$G$ has a connected partition of type~$\lambda$.
\end{proposition}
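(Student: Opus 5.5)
We would prove Wolfgang's criterion from Stanley's expansion of $X_G$ in the power-sum basis, by comparing the $p$-expansion with the $e$-expansion through a linear functional that kills the contributions of the missing types. Recall \citep{Sta95}
\[
X_G=\sum_{S\subseteq E(G)}(-1)^{\abs{S}}p_{\lambda(S)},
\]
where $\lambda(S)$ is the partition formed by the component sizes of the spanning subgraph $(V(G),S)$. Every block structure of $(V(G),S)$ is a connected partition of~$G$. Hence if $G$ has no connected partition of type~$\mu$, then no $S$ has $\lambda(S)$ refining to~$\mu$ from below in the sense needed. More precisely, every $\lambda(S)$ that occurs is the type of some connected partition.

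The key order relation is the following. If $\lambda(S)=\nu$, then merging components along edges of~$G$ (using connectivity of~$G$) produces connected partitions of every type coarser than~$\nu$ obtained by merging adjacent components. So the set $T(G)$ of types of connected partitions is closed under such coarsenings. We want a functional $\phi$ on $\Lambda_n$ with $\phi(e_\lambda)>0$ for the relevant $\lambda$ and $\phi(X_G)$ of the wrong sign unless $\mu\in T(G)$.

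We would instead use the cleaner route via the $e$-to-$p$ transition and the $\omega$ involution. Apply $\omega$: then $\omega X_G=\sum_S (-1)^{\abs S}\varepsilon_{\lambda(S)}p_{\lambda(S)}$ with $\varepsilon_\lambda p_\lambda=\omega p_\lambda$, which gives $\omega X_G=\sum_S (-1)^{n-c(S)}\cdots$. In fact $\omega X_G=\sum_{S}(-1)^{\abs{S}-(n-c(S))}p_{\lambda(S)}$, and one checks that the coefficient of $p_\lambda$ in $\omega X_G$ equals $\sum_{S:\lambda(S)=\lambda}(-1)^{\abs{S}-n+\ell(\lambda)}$. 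By Whitney-type reasoning (the broken-circuit theorem applied to each block, i.e.\ the sign of the Möbius function of the bond lattice), this coefficient is $(-1)^0$ times a nonnegative number. It is strictly positive exactly when some $S$ has $\lambda(S)=\lambda$, that is, when $\lambda\in T(G)$, since each connected block contributes $\abs{\mu}$ of its bond lattice, which is nonzero.

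Now suppose $X_G=\sum_\lambda c_\lambda e_\lambda$ with $c_\lambda\ge0$. Then $\omega X_G=\sum c_\lambda h_\lambda$, and $h_\lambda=\sum_\nu z_\nu^{-1}\abs{\ldots}p_\nu$ has nonnegative $p$-coefficients, with the coefficient of $p_\nu$ positive exactly when $\nu$ refines~$\lambda$. Since $c_{(n)}$ or some $c_\lambda$ is positive, in fact $X_G\ne0$. We then need the coefficient of $p_{\mu}$ to be positive for every~$\mu$. This requires some $c_\lambda>0$ with $\mu$ refining $\lambda$, and $\lambda=(n)$ suffices.

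The main obstacle is showing $c_{(n)}>0$. We would handle it by \citet{Sta95}'s result that the sum of $e$-coefficients over partitions of length~$k$ counts acyclic orientations with $k$ sinks. For connected~$G$, the count with exactly one sink is positive, so $\sum_{\ell(\lambda)=1}c_\lambda=c_{(n)}>0$. Then $\omega X_G$ has positive $p_\mu$-coefficient for all~$\mu$, hence $\mu\in T(G)$.
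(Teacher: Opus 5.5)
Your argument is correct. The paper gives no proof of this statement; it cites \citet[Prop.~1.3.3]{Wol97D}. Your route is the standard argument behind that citation, and it runs as follows.
\begin{enumerate}
\item The sink theorem of \citet{Sta95} gives $c_{(n)}>0$ for connected $G$.
\item After applying $\omega$, each $h_\lambda$ has nonnegative $p$-coefficients. This is equivalent to the $p$-coefficients of $e_\lambda$ having the coherent sign $(-1)^{n-\ell(\nu)}$. Hence every $p_\mu$-coefficient of $\omega X_G$ is at least $c_{(n)}/z_\mu>0$.
\item Stanley's expansion $X_G=\sum_{S}(-1)^{|S|}p_{\lambda(S)}$ shows that the $p_\mu$-coefficient can be nonzero only if some spanning subgraph, and hence some connected partition, has type~$\mu$.
\end{enumerate}

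A few points to clean up in the write-up.

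The Whitney/broken-circuit sign analysis of the bond lattice is correct but not needed. You only use the trivial implication ``nonzero $p_\mu$-coefficient $\Rightarrow$ some $S$ with $\lambda(S)=\mu$''.

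Your first two paragraphs are an abandoned detour and should be deleted. In particular, the remark that the set of connected-partition types is ``closed under such coarsenings'' holds only for merging adjacent blocks of a fixed set partition, not for coarsening types. For example, the claw has type $211$ but no connected partition of type $22$.

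You should justify in one line that a connected graph has an acyclic orientation with exactly one sink. Fix a vertex $v$, take a breadth-first ordering from $v$, and orient every edge toward its earlier endpoint. This orientation is acyclic, $v$ is a sink, and every other vertex has an out-edge to its parent.
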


\citet[\S 1]{BF06} called a connected graph \emph{decomposable}
if it has a connected partition of every type,
and proved that every such tree has $\Delta\le 4$;
see \cref{thm:Barth-Fournier}.
By \cref{prop:Wolfgang},
the same bound holds for $e$-positive trees.
\citet[\S 1]{HW08} studied trees with this property
under the name \emph{arbitrarily vertex decomposable}.
\citet{Li26X} called such a graph \emph{CPET}.
In the chromatic symmetric function literature,
\citet[Thm.~3.12]{Zhe22} proved the bound $\Delta\le 5$,
and \citet[Thm.~1.5]{Tom26} subsequently obtained $\Delta\le 4$
as part of a result on cut vertices.
\Citet[Thm.~1.6]{Tom26} proved the conjectured bound for spiders by excluding all
$4$-legged spiders.
\citet[Thms.~2.5 and~3.4--3.5; \S 4]{Li26X}
derived quantitative conditions for $e$-positivity of trees
using connected-partition counts and acyclic orientations,
including a necessary inequality for \emph{caterpillars}
obtained by counting orientations with two sinks.
We prove the conjecture uniformly for caterpillars.

\begin{theorem}\label{thm:e-.caterpillar}
Any $e$-positive caterpillar~$T$ has maximum degree $\Delta(T)\le 3$.
\end{theorem}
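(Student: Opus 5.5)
We argue by contradiction. Suppose $T$ is an $e$-positive caterpillar with $\Delta(T)\ge 4$. By \cref{prop:Wolfgang}, $T$ is decomposable, so the Barth--Fournier bound (\cref{thm:Barth-Fournier}) gives $\Delta(T)=4$. The plan has two stages. First, a \emph{connected-partition reduction} uses \cref{prop:Wolfgang} to cut $T$ down to a few explicit structural shapes. Second, a \emph{specialization--cone argument} shows that each surviving shape has some negative $e$-coefficient.

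\textbf{Stage 1: the reduction.} Write $T$ as a spine path $v_1\cdots v_m$ with leaves attached. Decomposability should force the following structure.
\begin{itemize}
\item There is exactly one degree-$4$ vertex. With two or more, we would exhibit a partition type, for example one with many parts of a size near $n/3$, that has no connected partition.
\item The remaining branch vertices are trivalent.
\item The lengths of the path segments between consecutive branch vertices satisfy divisibility and size constraints. Around the degree-$4$ vertex, a type such as $(k,k,\dots,k,r)$ with small $r$ forces these constraints.
\end{itemize}
I would organize this by cutting at the degree-$4$ vertex~$u$. Its neighbourhood consists of two spine directions and two pendant leaves, or one pendant leaf plus spine continuations. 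A connected partition of type $(k^{a},\dots)$ must isolate the pendant leaves of $u$ into blocks through $u$, and this yields arithmetic conditions. Spiders are already excluded by \citet[Thm.~1.6]{Tom26}. The two infinite decomposable families mentioned in the abstract show that this stage cannot finish the proof, so it only narrows the problem to a residual class $\mathcal R$. In $\mathcal R$ the degree-$4$ vertex sits in a constrained position, but the trivalent vertices elsewhere are unbounded in number and spacing.

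\textbf{Stage 2: specializations and transfer matrices.} For $\mathcal R$, choose a ring homomorphism $\phi\colon\Lambda\to\mathbb{Q}$ with $\phi(e_k)\ge 0$ for all $k$ and $\phi(e_k)=0$ for all large $k$ (finite support in $e$). Then $e$-positivity forces $\phi(X_T)\ge0$, so a witness $\phi(X_T)<0$ suffices. To compute $\phi(X_T)$ along the spine, use a recursion for $X_T$ in the power-sum or $e$-basis, for instance the deletion--near-contraction or a leaf-attachment recurrence. Cutting at spine edges, $\phi(X_T)$ becomes a product $\mathbf{u}^\top M_{s_1}M_{s_2}\cdots M_{s_\ell}\mathbf{w}$. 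Here the $M_s$ are small matrices, one for a bare spine vertex and one for a spine vertex carrying a leaf. The finite support of $\phi$ keeps the state vector finite-dimensional. The degree-$4$ vertex contributes a special boundary vector.

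\textbf{Stage 3: certifying negativity.} For the unbounded part, find a finitely generated cone $C$ in the state space with $M_sC\subseteq C$ for every letter $s$. Then every vector reached from a starting vector in $C$ stays in $C$. It then suffices to check that the pairing with the terminal vector from the degree-$4$ end is strictly negative on the generators of $C$, which is a finite exact rational computation. I expect one specialization and cone to cover most residual shapes. The remaining cases, such as those where the degree-$4$ vertex is close to an end of the spine, would need a second specialization or further cones. The main obstacle is finding a specialization and an invariant cone that work simultaneously. Invariance under both the bare and the leafed transfer matrix is delicate, and the negativity must hold uniformly on the whole cone. I would search numerically first: pick $\phi$ supported on $e_1,\dots,e_4$, compute the matrices, and iterate from the initial vector to guess extreme rays. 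I would then verify the inclusions $M_sC\subseteq C$ by exact rational linear programming.
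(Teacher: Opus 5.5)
Your architecture is the paper's: Wolfgang's criterion with the Barth--Fournier and Hor\v{n}\'ak--Wo\'zniak structure theory, then a finite-support specialization with $\theta(e_j)\ge 0$, transfer matrices along the spine, and invariant finitely generated cones certified in exact arithmetic. As written, though, it is a research plan rather than a proof. The step that actually produces $\theta(X_T)<0$ is deferred to a numerical search, and several parts of the setup would not work as stated.

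First, the degree-$4$ vertex is not a boundary vector. Both of its sides carry unboundedly many trivalent vertices, so the right formula is the paper's $\theta(X_T)=r_{a,\mathbf b}W_\theta r_{d,\mathbf c}^\top$, with the symmetric middle matrix $(W_\theta)_{ij}=\theta(X_{S(i,j,1,1)})$. What must be certified is therefore strict negativity of a \emph{bilinear} form on pairs of cone generators (e.g.\ $F_\varphi W_\varphi F_\varphi^\top<0$), together with nonvanishing of the side rows. A linear functional on one cone does not suffice.

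Second, you ask for a cone invariant under each single-letter matrix. The paper neither needs nor obtains this. It uses the exact Hor\v{n}\'ak--Wo\'zniak parities (terminal lengths $a,d$ even, all gaps odd), which your Stage~1 never extracts. With these parities, each side row is $\mathbf e_2^\top$ times a word in $A^2$ and $M_1$, which is empty or ends in $A^2$ when the center has no trivalent neighbor. Accordingly, $\cone{F_\varphi}$ is shown invariant only under $D_j=M_1^jA^2$ for $0\le j\le 3$, with general $M_1^uA^2$ reduced to these via $M_1^4=\frac32M_1^2+\frac12M_1+\frac3{16}I_4$.

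A smaller point: finite support gives a finite state space because $\theta$ is evaluation at the $m$ roots of $\sum_{j=0}^m(-1)^j\theta(e_j)z^{m-j}$. This makes $\theta(X_T)$ a weighted proper $m$-coloring sum, with the color of the current spine vertex as the state. A deletion--near-contraction or power-sum recursion does not obviously give this.

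The more serious missing idea concerns the residual case, which you misplace. The hard configuration is not a degree-$4$ vertex near an end of the spine. It is a center with a trivalent neighbor ($b_1=1$): then a side row ends in $M_1$ instead of $A^2$, and the nonadjacent certificate no longer applies. The paper does not close this case with cones alone; it feeds in further connected-partition information.
\begin{itemize}
\item A partition of type $3^{N/3}$, $4\,3^{(N-4)/3}$, or $3^{(N-2)/3}2$ forces at least one side factor to be a concatenation of blocks of size~$3$ (\cref{lem:adj.block3}). This places that side's row in the smaller cone $\cone{J}\subseteq\cone{H}$.
\item The boundary restriction $\mathbf b\ne\epsilon$, $b_s\ge a+3$ for $a\in\{2,4\}$ (\cref{lem:bs>=a+3}) is needed to seed $\cone{H}$.
\end{itemize}
Only then does the mixed inequality $HW_\psi J^\top<0$, for a specialization $\psi$ with $m=10$, finish the case. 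Your plan has no mechanism for shrinking one side's cone in this way. Without explicit certificates, or an argument that they exist, nothing in your Stages~2--3 is actually established.
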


The bound~$3$ is sharp:
\citet[Eg.~6.5]{Zhe22} recorded the $e$-positive non-spider
caterpillar~$M_2$ of $\Delta=3$ shown in \cref{fig:intro-graphs}.
\citet[\S 3]{CHS07} showed that caterpillars are $S(2,2,2)$-free trees;
see \cref{fig:intro-graphs}.
The classes of caterpillars and spiders overlap,
but neither contains the other.
Caterpillars also feature in the \emph{tree isomorphism problem}:
\citet[Prop.~10]{MMW08} showed that
the chromatic symmetric function recognizes caterpillars among trees,
and \citet[Thm.~2]{LS19} proved that it distinguishes individual caterpillars.
See also
\citet[Thm.~1.1]{GNRS24} for a more general setting.

\citet{Tom26} constructed an infinite family of CPET $4$-legged spiders
that are not $e$-positive,
showing that the criterion alone cannot settle the spider case.
In \cref{cor:infinite.CPET},
we give two infinite families of non-spider CPET caterpillars
of maximum degree~$4$,
one arising from a construction of \citet{CGMPW06}.
These families show that the CPET condition alone is insufficient
to establish \cref{thm:e-.caterpillar}.

Existing approaches to non-$e$-positivity fall into two broad classes:
structural obstructions and direct detection of negative $e$-coefficients.
The first approach includes \cref{prop:Wolfgang},
the stable-partition criterion of \citet{Sta98},
the strongly-niceness criterion of \citet{LLYZ25},
and the combinatorial formula of \citet{WW20}
for every Schur coefficient.
A second approach seeks a specific negative $e$-coefficient.
For example,
\citet{Tom26} ruled out $e$-positivity for $4$-legged spiders
by computing such a coefficient.
Further applications appeared in \citet{WW23-DAM}.

We introduce the \emph{specialization--cone method}
as our second main contribution.
We choose a finite-support specialization~$\theta$ satisfying
$\theta(e_j)\ge 0$ for every $j\ge 1$.
The argument combines three mathematical ingredients.
First,
the transfer formula in \cref{lem:transfer}
gives a finite-dimensional representation of~$\theta(X_T)$
as a bilinear form in the two side evaluation rows,
whose dimension depends only on~$\theta$.
Second,
invariant finitely generated cones contain all admissible side evaluation rows.
Third,
strict negativity of this form on the relevant pairs of cone generators
extends to their nonzero nonnegative combinations,
giving $\theta(X_T)<0$ and hence ruling out $e$-positivity.
Cone invariance controls arbitrarily long products
of the relevant transfer matrices.
Consequently,
these finite certificates handle an unbounded number of trivalent vertices
and arbitrary spacings permitted by the structural reduction.
In establishing \cref{thm:e-.caterpillar},
we implement this method using only two finite-support specializations
$\varphi$ and~$\psi$ and three row cones.

This is the first method
to combine specializations and invariant cones,
to the best of our knowledge,
for detecting non-$e$-positivity.
Both ingredients have precedents.
\citet[\S 4]{HJLOY21} specialized the elementary generators by $e_n\mapsto t$,
giving a new derivation of the sink theorem of \citet{Sta95}.
\citet[\S\S 3 and~4]{CC24} introduced the tree polynomial
through the specialization $X_{P_n}\mapsto x$,
interpreting the coefficients in terms of certain spanning trees
and edge cutsets.
Invariant cones are classical;
see, for example,
\citet{Bir67IC}.
Here we certify the required cone inclusions
by nonnegative rational factorizations.
Matrix methods have also appeared in the study of chromatic symmetric functions:
\citet{TV26-vglue} used matrix multiplication to encode vertex gluing.

This paper is organized as follows.
\Cref{sec:framework} describes untreated caterpillars
and expresses their chromatic symmetric functions in matrix products.
It establishes the framework of the specialization--cone method.
\Cref{sec:nonadj,sec:adj}
handle the cases without and with a trivalent neighbor of the degree-$4$ center,
respectively.
\Cref{sec:CPET} proves the additional CPET results.
\Cref{sec:Li} gives two explicit examples
and a necessary condition for $e$-positivity of caterpillars with $\Delta=3$.

\section{The structural reduction and matrix transfer}
\label{sec:framework}

This section is devoted to \cref{prop:T.abcd,lem:transfer}.
The former parameterizes untreated caterpillars,
and the latter expresses their chromatic symmetric functions in matrix products.

Write $[n]=\{1,\dots,n\}$ for $n\ge 0$.
We use the standard word terminology in \citet[Ch.~1]{Lot97B}.
A \emph{word} is a finite sequence of \emph{letters} from an alphabet.
The empty word is denoted by~$\epsilon$.
For a word $w=w_1\cdots w_l$,
its \emph{reversal} is $\overline w=w_l\cdots w_1$.
The \emph{concatenation} of words~$u$ and~$v$ is denoted by~$uv$.
For $k\ge 0$,
the power~$w^k$ denotes the concatenation of~$k$ copies of~$w$,
with $w^0=\epsilon$.
A \emph{factor} of~$w$ is a word~$z$ such that
$w$ has a \emph{factorization} $w=pzq$ for some words~$p$ and~$q$.
It is a \emph{prefix} if $p=\epsilon$,
and a \emph{suffix} if $q=\epsilon$.
A \emph{composition} is a word $I=i_1\cdots i_\ell$
over~$\mathbb Z_{>0}$,
with \emph{size} $\abs{I}=i_1+\cdots+i_\ell$.
We also write $I$ as the list of parts $i_j$.
A \emph{partition} $\lambda\vdash n$ is a multiset of positive integers
with sum~$n$,
identified with the composition whose parts are weakly decreasing.
The empty word is the unique composition of~$0$.

For general CPET trees,
\citet[Cor.~1]{BF06} proved the following bound.

\begin{theorem}[\citeauthor{BF06}]
\label{thm:Barth-Fournier}
Every CPET tree has maximum degree at most~$4$.
\end{theorem}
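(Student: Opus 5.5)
The plan is to show that a tree~$T$ with a vertex~$v$ of degree $d\ge 5$ cannot admit a connected partition of some carefully chosen type; this contradicts the CPET property. The natural choice is a type whose parts are nearly all equal: $\lambda=(k,k,\dots,k,r)$ with $k$ fixed and $0\le r<k$, or simply $\lambda=(2,2,\dots,2)$ or $(2,\dots,2,1)$ for the parity-adjusted near-perfect matching. I would first try the types $\lambda=(k^{q},r)$ for every $k$ with $2\le k\le n$ and argue by counting how the branches at~$v$ can be covered.

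Let $B_1,\dots,B_d$ be the components of $T-v$, with sizes $b_1\le\dots\le b_d$, so that $n=1+\sum b_i$. In any connected partition, the block containing~$v$ meets some set of branches, and every other block lies entirely inside a single branch. If $v$'s block has size~$s$, then at most $s-1$ branches are touched by it. Every untouched branch $B_i$ must then be tiled exactly by blocks, so its size is a sum of parts of~$\lambda$. For $\lambda=(k^q,r)$, at most one untouched branch can absorb the remainder part~$r$, and every other untouched branch must have size divisible by~$k$.

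The key step is choosing~$k$ adapted to the branch sizes. Take $k=b_1+1$, the smallest branch size plus one. Then the branch $B_1$ of size $k-1$ cannot be tiled by parts equal to~$k$. It must therefore either be touched by $v$'s block or carry the part~$r$ (which requires $r=k-1$, or a tiling by~$r$ together with smaller contributions). I would take $k$ slightly larger, in the range between~$b_1$ and the sizes of the two or three smallest branches. The goal is to force the small branches to need $v$'s block: each touched branch uses up the block's budget of $k-1$ extra vertices. Since $d\ge5$, at least four branches need attention, but one block of size~$k$ plus the single remainder can service only boundedly many of them when $k$ is about the size of the smallest branches.

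The main obstacle is the case analysis when the branch sizes are unequal, for example when many branches are leaves and one is huge. In that situation I would instead take $\lambda=(2,2,\dots)$ or $(3,3,\dots)$. With $d\ge5$ and at least three leaf neighbours, a partition into blocks of size at most~$3$ places at most two of those leaves in $v$'s block. Any further leaf would then be an isolated singleton, and the type allows at most one such singleton. Combining a small-$k$ argument for many short branches with a $k\approx b_1+1$ argument otherwise should cover all cases, and this is where I expect the real work to lie.
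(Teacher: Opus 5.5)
Your plan has a genuine gap that more case analysis cannot close. Every type you propose to test has the form $(k^q,r)$ with $0\le r<k$; this covers $(2,\dots,2)$, $(2,\dots,2,1)$ and $(3,\dots,3)$. Such types cannot detect a vertex of degree~$5$ in general.

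The budget heuristic is where it breaks. For a type $(k^q,r)$, the block $C$ containing $v$ must take exactly $b_i\bmod k$ vertices from every branch $B_i$, except possibly the one branch hosting the part $r$. So touching a branch costs only its residue, which may be $1$, and nothing forces the residues to be large for any $k$.

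Concretely, take a spider $S(b_1,\dots,b_d)$ with $\rho_i=b_i\bmod k$ and $\sigma=\sum_i\rho_i$. The type $(k^q,r)$ is realized whenever $\sigma-\max_i\rho_i\le k-1$:
\begin{itemize}
\item If $\sigma+1\le k$, let $C$ consist of $v$ and the first $\rho_i$ vertices of each leg, so $|C|=\sigma+1\in\{r,k\}$.
\item Otherwise $r=\sigma+1-k$ is at most the largest residue $\rho_{i_0}$. Let $C$ take only $\rho_{i_0}-r$ vertices from leg $i_0$, whose far end then carries the part~$r$.
\end{itemize}

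Now take $T=S(1,1,2,12,120)$, of order $137$.
\begin{itemize}
\item For $k\ge 17$, omitting the long leg's residue already gives $\sigma-\max_i\rho_i\le 1+1+2+12=16\le k-1$.
\item For $2\le k\le 16$, a direct check works; for example the residue vectors are $(1,1,0,0,0)$, $(1,1,2,0,0)$, $(1,1,2,2,0)$, $(1,1,2,5,1)$, $(1,1,2,12,8)$ for $k=2,3,5,7,14$. The inequality holds throughout, with equality at $k=2,3,5$.
\end{itemize}
So this tree of maximum degree~$5$ has a connected partition of every type your argument would use, and no choice of~$k$ can complete the proof.

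The missing idea is to use types with several distinct part sizes, chosen so that small branches cannot be split and are forced entirely into $v$'s block. In the example, the type $16^5\,15\,14^3$ is not realized. All its parts exceed $12$, so none of the legs of lengths $1,1,2,12$ can contain a block or leave a nonempty remainder outside $C$. All four therefore lie in $C$, giving $|C|\ge 17>16$.

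Turning this into a proof requires two things you have not supplied. You must choose such a window of part sizes for an arbitrary branch profile. You must also deal with those $n$ that cannot be written as a sum of parts from that window. By the example above, any proof must use such mixed types. The paper does not reprove this theorem; it quotes it from Barth and Fournier.
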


\Citet[Thm.~21 and Cors.~22--23]{HW08} proved further structural restrictions
for CPET caterpillars.

\begin{theorem}[\citeauthor{HW08}]
\label{thm:Hornak-Wozniak}
Any CPET tree~$G$ contains at most one path of odd order at least~$3$
whose endpoints are leaves of~$G$
and whose vertices other than the endpoints and their neighbors
have degree~$2$ in~$G$.
Consequently,
a CPET caterpillar has at most one vertex of degree~$4$;
any such vertex has exactly two leaf neighbors.
\end{theorem}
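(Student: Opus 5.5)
The plan is to argue by contradiction with a single well-chosen partition type made almost entirely of $2$'s. Call a path as in the statement a \emph{pendant odd path}. Write it as $x=p_1,p_2,\dots,p_k=y$ with $k\ge 3$ odd, where $x,y$ are leaves of~$G$ and $p_3,\dots,p_{k-2}$ have degree~$2$ in~$G$.

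The key local observation concerns any connected partition of~$G$ whose blocks all have size~$2$ except for a few odd blocks.
\begin{itemize}
\item The leaf $x$ lies in a block containing $p_2$. If that block has size~$2$, it is $\{p_1,p_2\}$; similarly for $y$ and $p_{k-1}$.
\item If $k=3$, the two leaves $p_1,p_3$ share the neighbor $p_2$. So they cannot both lie in $2$-blocks, and some odd block (of size $1$ or at least~$3$) must meet $\{p_1,p_2,p_3\}$.
\item If $k\ge 5$ and both end blocks are $\{p_1,p_2\}$ and $\{p_{k-1},p_k\}$, then the interior vertices $p_3,\dots,p_{k-2}$ form an odd number of degree-$2$ vertices. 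Their only neighbors outside the interior are already used. Hence they are covered by blocks inside the interior, which cannot all have size~$2$.
\end{itemize}
In every case, every pendant odd path $P$ meets an odd block of the partition in $V(P)$.

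Now choose the type according to parity. If $n=\abs{V(G)}$ is even, take $\lambda=2^{n/2}$. There is no odd block, so by the observation $G$ has no pendant odd path at all. If $n$ is odd, take $\lambda=3\,2^{(n-3)/2}$. There is exactly one odd block $B$, with $\abs{B}=3$, and by the observation $B$ meets $V(P)$ for every pendant odd path~$P$. It remains to show that one $3$-block cannot serve two distinct pendant odd paths $P\ne Q$.

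If $V(P)\cap V(Q)=\emptyset$, then $B$ would have to be a connected set of three vertices meeting both. Such a set is confined to a neighbourhood of $p_2$ or $p_{k-1}$ and cannot also meet $Q$, since a connected $3$-set meeting both paths would pass through a leaf or an interior degree-$2$ vertex in a way that strands a vertex of odd parity. I would check this by the same counting on the leftover vertices of $P$ and of $Q$.

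The delicate case, and what I expect to be the main obstacle, is when $P$ and $Q$ share vertices. This happens, for example, when several leaves hang from one vertex, giving several pendant odd paths of order~$3$ through it. Here I would try first to replace $\lambda$ by a type with parts $1$ or $3$ chosen so that the leaves forced into distinct blocks outnumber the available odd blocks. For instance, a vertex with $r$ leaf neighbours forces at least $r-1$ leaves out of $2$-blocks, and each odd block can absorb at most two of them. If a clean choice is not available, I would fall back to a case split on how $P$ and $Q$ overlap.

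For the consequence, let $T$ be a CPET caterpillar and $v$ a vertex of degree~$4$, which is the maximum possible by \cref{thm:Barth-Fournier}. At most two neighbours of $v$ lie on the spine, so $v$ has at least two leaf neighbours $\ell,\ell'$, and $\ell v\ell'$ is a pendant odd path of order~$3$.
\begin{itemize}
\item If $v$ had three leaf neighbours, I would rule this out by the overlap analysis above, or directly: for both parities of~$n$, choose a type with at most one odd part, which cannot absorb three leaves of a common neighbour.
\item If there were two vertices of degree~$4$, each would yield its own pendant odd path, and these are vertex-disjoint. This contradicts the first part.
\end{itemize}
Hence any such $v$ has exactly two leaf neighbours, and it is unique.
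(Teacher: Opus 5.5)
The paper does not prove this statement; it quotes it from Hor\v{n}\'ak and Wo\'zniak, so your proposal has to stand on its own. There is a genuine gap in the odd case of the first assertion.

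Your choice of the types $2^{n/2}$ and $3\,2^{(n-3)/2}$ is the right one, your local observation is correct, and the even case is complete. The odd case, however, is not proved. You leave the overlapping case as a list of things you would try. Moreover, the invariant you carry forward (the $3$-block $B$ \emph{meets} every pendant odd path) is too weak to finish. For example, let $v$ have leaf neighbours $a,b$, and let $a,v,q_3,q_4,y$ be a pendant odd path with $\deg q_3=2$. Then $B=\{a,v,b\}$ meets both $avb$ and $avq_3q_4y$. The contradiction comes from $q_3$ being stranded between $B$ and the $2$-block $\{q_4,y\}$, not from $B$. So no argument of the form ``a connected $3$-set cannot meet both paths'' can work once the paths overlap. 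Your fallback of counting leaves at a common neighbour also does not reach configurations such as two long pendant paths that share a leaf and diverge at $p_2$. This case is not marginal: in \cref{prop:T.abcd}, the parity of $b_1$ comes from the path of order $b_1+3$ from a leaf at $v_\ast$ to the leaf at $u_1$, which shares two vertices with $xv_\ast y$.

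The missing idea is to push your stranding argument one step further to obtain $B\subseteq V(P)$.
\begin{itemize}
\item If $p_1\in B$, then $B=\{p_1,p_2,w\}$ with $w$ adjacent to $p_2$.
\item For $k=3$, both leaves must join $p_2$, so $B=\{p_1,p_2,p_3\}$.
\item For $k\ge5$, neither $p_{k-1}$ nor $p_k$ is within distance $1$ of $p_2$, so the block of $p_k$ is the $2$-block $\{p_{k-1},p_k\}$. Unless $w=p_3$, the odd segment $p_3,\dots,p_{k-2}$ is then stranded.
\end{itemize}
Hence $B$ is $\{p_1,p_2,p_3\}$, or $\{p_{k-2},p_{k-1},p_k\}$, or lies in $\{p_3,\dots,p_{k-2}\}$ with $\{p_1,p_2\}$ and $\{p_{k-1},p_k\}$ as blocks. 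This excludes disjoint $P,Q$ at once.

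Now let $P\ne Q$ both contain $B$.
\begin{itemize}
\item Suppose $B$ contains the leaf $p_1$. Then $Q$ starts $p_1p_2p_3$; for $k=3$ this already gives $Q=P$. Otherwise $Q$ is forced along the degree-$2$ vertices up to $p_{k-1}$. Since $p_{k-1}$ has the extra neighbour $p_k$, $Q$ must stop at a second leaf of $p_{k-1}$, and that leaf would have to share the $2$-block $\{p_{k-1},p_k\}$.
\item Suppose $B$ lies inside $P$. Then $Q\supseteq\{p_2,\dots,p_{k-1}\}$. If $Q$ leaves $p_2$ through some $u\notin\{p_1,p_3\}$, then either $u$ is a leaf stranded by the block $\{p_1,p_2\}$, or $p_2$ is a vertex of $Q$ required to have degree~$2$. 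The same holds at $p_{k-1}$, so $Q=P$.
\end{itemize}
Thus the single type $3\,2^{(n-3)/2}$ suffices, and no further types are needed.

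Your derivation of the consequence for caterpillars survives the gap, because it involves only order-$3$ paths. Two leaves at a vertex $v$ must lie in the block of $v$, which therefore has size at least~$3$. This is impossible for type $2^{n/2}$ and forces that block to be $B$ for type $3\,2^{(n-3)/2}$. Hence no vertex has three leaf neighbours, and no two vertices have two each.
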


We can now describe the structure of CPET caterpillars with~$\Delta\ge 4$.

\begin{proposition}
\label{prop:T.abcd}
Every CPET caterpillar with $\Delta\ge 4$
must be of the form $T(a;\mathbf b\,\vert\,\mathbf c;d)$,
defined as follows:
it is obtained from two paths
meeting only at their common endpoint~$v_\ast$
by attaching two leaves there
and one leaf at each marked internal vertex
$u_1$,
$\dots$,
and~$u_s$ on one path and~$v_1$,
$\dots$,
and~$v_t$ on the other.
These vertices are indexed outward from~$v_\ast$.
The successive segment lengths are~$b_1$,
$\dots$,
$b_s$,
and~$a$ on the $u$-side
and~$c_1$,
$\dots$,
$c_t$,
and~$d$ on the $v$-side,
where~$a$ and~$d$ are positive even integers
and $\mathbf b=b_1\dotsm b_s$ and $\mathbf c=c_1\dotsm c_t$
are possibly empty compositions with odd parts.
\end{proposition}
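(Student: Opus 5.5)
Let $T$ be a CPET caterpillar with $\Delta(T)\ge 4$. By \cref{thm:Barth-Fournier}, $\Delta(T)=4$. By \cref{thm:Hornak-Wozniak}, $T$ has exactly one vertex $v_\ast$ of degree~$4$, and it has exactly two leaf neighbors. Since $T$ is a caterpillar, deleting all leaves leaves a spine path. The two non-leaf neighbors of $v_\ast$ lie on this spine, so $v_\ast$ is an internal spine vertex. The spine together with one end leaf at each extremity splits into two paths meeting only at~$v_\ast$. Every other vertex has degree at most~$3$. So each internal vertex of either path other than $v_\ast$ has degree $2$ or $3$, and the degree-$3$ ones carry exactly one pendant leaf. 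Call these $u_1,\dots,u_s$ and $v_1,\dots,v_t$, indexed outward. Define $b_i$ as the distance from $u_{i-1}$ to $u_i$ (with $u_0=v_\ast$), and $a$ as the distance from $u_s$ to the end leaf of the $u$-path. Define $c_j$ and $d$ analogously. This gives the shape $T(a;\mathbf b\,\vert\,\mathbf c;d)$, and it remains to prove the parity conditions: $a,d$ even and all $b_i,c_j$ odd.

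I would get parity from \cref{thm:Hornak-Wozniak} applied to suitable leaf-to-leaf paths. Both pendant leaves at~$v_\ast$ are leaves of~$T$, and the path $\ell\, v_\ast\, \ell'$ through them has order $3$. It is odd, its endpoints are leaves, and it has no vertices other than the endpoints and their neighbors. So this is one path of the kind the theorem counts, and \emph{no other} such path may exist.

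\emph{Odd $b_i$.} Take the pendant leaves at two consecutive marked vertices $u_{i-1}$ and $u_i$ (with $u_0=v_\ast$, using one of its pendant leaves). The path from leaf to leaf through the spine segment has order $b_i+3$. Its interior vertices other than the two neighbors of the endpoints are strictly between $u_{i-1}$ and $u_i$, so they have degree~$2$. Hence this is a counted path, so $b_i+3$ must be even, i.e.\ $b_i$ odd. The same argument gives odd $c_j$.

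\emph{Even $a$.} Use the path from the pendant leaf at $u_s$ (or at $v_\ast$ if $s=0$) to the end leaf of the $u$-path. Its order is $a+2$, and all intermediate vertices have degree~$2$. Hence $a+2$ is even, so $a$ is even; similarly $d$ is even.

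The segment lengths must also be positive. This needs a check that $a\ge 1$, i.e.\ that the end of each path really is a leaf beyond the last marked vertex, which holds by the choice of spine extension. Small cases also need care: for instance $b_i=1$ makes the two marked vertices adjacent, and the path order $4$ still fits the parity statement.

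The main obstacle is the precise reading of the hypothesis in \cref{thm:Hornak-Wozniak} for very short paths, such as order $2$ or $3$ and adjacent marked vertices. I would verify these boundary cases directly, and ensure the candidate paths above are distinct from the length-$3$ path at~$v_\ast$, so that the uniqueness clause applies. If a boundary case escapes the theorem, I would fall back on exhibiting an explicit missing connected-partition type (e.g.\ $\lambda=(2,2,\dots,2,1)$-type arguments on the near-perfect matching structure) to reach the same parity conclusion.
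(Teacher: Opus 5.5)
Your proposal is correct and is essentially the paper's proof: there, too, \cref{thm:Barth-Fournier,thm:Hornak-Wozniak} give the unique center $v_\ast$ with two leaf neighbors $x,y$, and the paper observes that $xv_\ast y$ is the one permitted odd path. It then reads off the parities from uniqueness, and your leaf-to-leaf paths of orders $b_i+3$ and $a+2$ are exactly the ``elementary'' details it leaves implicit. Your boundary worries are unfounded and no fallback is needed: for $b_i=1$ the path has even order $4$ with a vacuous degree condition, and $a=1$ would produce a second odd path of order $3$ (distinct from $xv_\ast y$ since $u_s\ne v_\ast$, while $s=0$ forces $a\ge 2$ anyway), so your argument already excludes it.
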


\begin{proof}
Let~$G$ be a CPET caterpillar with $\Delta\ge 4$.
By \cref{thm:Barth-Fournier,thm:Hornak-Wozniak},
$G$ has a unique vertex~$v_\ast$ of degree~$4$ with exactly two leaf neighbors,
say, $x$ and~$y$.
The path~$xv_\ast y$ has order~$3$.
By the uniqueness of such a path in \cref{thm:Hornak-Wozniak},
it is elementary to find the desired parities of all parameters.
\end{proof}

The same construction defines $T(a;\mathbf b\,\vert\,\mathbf c;d)$
for arbitrary positive terminal lengths and possibly empty compositions.
We call~$v_\ast$ the \emph{center}.
The segments of lengths~$b_i$ and~$c_j$ are the \emph{gaps},
and those of lengths~$a$ and~$d$ are the \emph{terminals};
see \cref{fig:T.abcd}.
For positive terminal lengths,
the two components left after removing~$v_\ast$ and its two attached leaves
are the \emph{sides}.
Their \emph{weights},
namely their numbers of vertices,
are $a+\sum_{i=1}^s(b_i+1)$ and $d+\sum_{j=1}^t(c_j+1)$, respectively.

\begin{figure}[ht]
\centering
\begin{tikzpicture}[
x=1.0cm, y=0.84cm,
every node/.style={font=\small}]
\coordinate (L) at (-6,0);
\coordinate (ur) at (-4.6,0);
\coordinate (uone) at (-1.8,0);
\coordinate (center) at (0,0);
\coordinate (vone) at (1.8,0);
\coordinate (vs) at (4.6,0);
\coordinate (R) at (6,0);
\coordinate (adotswest) at (-5.6,0);
\coordinate (adotseast) at (-5.0,0);
\coordinate (leftdotswest) at (-3.5,0);
\coordinate (leftdotseast) at (-2.9,0);
\coordinate (bunodotswest) at (-1.2,0);
\coordinate (bunodotseast) at (-0.6,0);
\coordinate (cunodotswest) at (0.6,0);
\coordinate (cunodotseast) at (1.2,0);
\coordinate (rightdotswest) at (2.9,0);
\coordinate (rightdotseast) at (3.5,0);
\coordinate (ddotswest) at (5.0,0);
\coordinate (ddotseast) at (5.6,0);
\draw[graph edge] (L)--(adotswest);
\draw[graph edge] (adotseast)--(ur)--(leftdotswest);
\draw[graph edge] (leftdotseast)--(uone)--(bunodotswest);
\draw[graph edge] (bunodotseast)--(center)--(cunodotswest);
\draw[graph edge] (cunodotseast)--(vone)--(rightdotswest);
\draw[graph edge] (rightdotseast)--(vs)--(ddotswest);
\draw[graph edge] (ddotseast)--(R);
\draw[graph edge] (ur)--++(0,0.92);
\draw[graph edge] (uone)--++(0,0.92);
\draw[graph edge] (center)--++(-0.42,0.92);
\draw[graph edge] (center)--++(0.42,0.92);
\draw[graph edge] (vone)--++(0,0.92);
\draw[graph edge] (vs)--++(0,0.92);
\foreach \p in {L,ur,uone,center,vone,vs,R}
\node[graph vertex] at (\p) {};
\foreach \p in {(-4.6,0.92),(-1.8,0.92),(-0.42,0.92),
(0.42,0.92),(1.8,0.92),(4.6,0.92)}
\node[graph vertex] at \p {};
\node[inner sep=1.2pt] at (-5.3,0) {$\cdots$};
\node[inner sep=1.2pt] at (-3.2,0) {$\cdots$};
\node[inner sep=1.2pt] at (-0.9,0) {$\cdots$};
\node[inner sep=1.2pt] at (0.9,0) {$\cdots$};
\node[inner sep=1.2pt] at (3.2,0) {$\cdots$};
\node[inner sep=1.2pt] at (5.3,0) {$\cdots$};
\node[below=5pt] at (ur) {$u_s$};
\node[below=5pt] at (uone) {$u_1$};
\node[below=5pt] at (center) {$v_\ast$};
\node[below=5pt] at (vone) {$v_1$};
\node[below=5pt] at (vs) {$v_t$};
\draw[segment brace]
(-5.94,-0.62)--node[below=4pt] {$a$} (-4.66,-0.62);
\draw[segment brace]
(-4.54,-0.62)--node[below=4pt] {$b_s,\dots,b_2$} (-1.86,-0.62);
\draw[segment brace]
(-1.74,-0.62)--node[below=4pt] {$b_1$} (-0.06,-0.62);
\draw[segment brace]
(0.06,-0.62)--node[below=4pt] {$c_1$} (1.74,-0.62);
\draw[segment brace]
(1.86,-0.62)--node[below=4pt] {$c_2,\dots,c_t$} (4.54,-0.62);
\draw[segment brace]
(4.66,-0.62)--node[below=4pt] {$d$} (5.94,-0.62);
\end{tikzpicture}
\caption{The caterpillar $T(a;\mathbf b\,\vert\,\mathbf c;d)$
with center~$v_\ast$,
gap lengths~$b_i$ and~$c_j$,
and terminal lengths~$a$ and~$d$.}
\label{fig:T.abcd}
\end{figure}

Let $\Lambda$ be the ring of symmetric functions.
Fix an integer~$m\ge 2$ and a $\mathbb Q$-algebra specialization
$\theta\colon\Lambda\to\mathbb Q$
such that $\theta(e_m)\ne 0$ and $\theta(e_j)=0$ for $j\ge 1$ unless $2\le j\le m$.
Write $L_n=\theta(X_{P_n})$ for $n\ge 1$ and set $L_0=1$.

\begin{lemma}
\label{lem:rec.L}
$L_n=\theta(e_n)+\sum_{j=2}^n(j-1)\theta(e_j)L_{n-j}$ for $n\ge 1$.
\end{lemma}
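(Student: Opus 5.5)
The plan is to use the known $e$-expansion of the path's chromatic symmetric function and then apply~$\theta$. The classical formula (Stanley, or Shareshian--Wachs) states that
\[
X_{P_n}=\sum_{I=i_1\cdots i_\ell\models n} w_I\, e_{I},\qquad w_I=i_1\prod_{k=2}^{\ell}(i_k-1),
\]
where $e_I=e_{i_1}\cdots e_{i_\ell}$ and the sum runs over compositions~$I$ of~$n$. I would take this formula as the starting point, citing it from the literature. If a self-contained argument is preferred, it follows from the generating function $\sum_{n\ge0}X_{P_n}z^n=\bigl(\sum_{j\ge0}e_jz^j\bigr)/\bigl(1-\sum_{j\ge2}(j-1)e_jz^j\bigr)$.

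Equivalently, I would derive the recurrence directly at the symmetric-function level,
\[
X_{P_n}=e_n+\sum_{j=2}^{n}(j-1)\,e_j\,X_{P_{n-j}}\qquad(n\ge1),
\]
with the convention $X_{P_0}=1$. To obtain it from the composition formula, I would group the compositions of~$n$ by their \emph{last} part~$j$. The one-part composition $I=n$ contributes $w_I e_I=n\,e_n$. A composition $I=I'j$ with $I'\models n-j$ nonempty contributes $w_{I'}(j-1)e_{I'}e_j$, because the weight of a non-first part~$j$ is $j-1$. Summing over~$I'$ gives $(j-1)e_jX_{P_{n-j}}$. For $j=n$ the prefix $I'$ is empty; I would interpret the $j=n$ term $(n-1)e_nX_{P_0}$ as accounting for $n-1$ of the $n$ copies of $e_n$, with the remaining copy being the standalone $e_n$. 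The same split also shows that the $j=1$ term vanishes, since its weight $j-1=0$; this is why the sum starts at $j=2$.

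Applying the algebra homomorphism~$\theta$ to the recurrence gives the lemma immediately, since $\theta(X_{P_{n-j}})=L_{n-j}$ and $L_0=1$. The hypothesis that $\theta(e_j)=0$ outside $2\le j\le m$ is not needed for the identity itself. It only makes the sum effectively finite, with terms for $2\le j\le\min(n,m)$, and makes $\theta(e_n)=0$ for $n>m$, so that the recurrence becomes linear of order~$m$, as the transfer-matrix step later requires.

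The main obstacle is bookkeeping rather than substance: one must pin down the correct form of the path $e$-expansion, with first-part weight $i_1$ versus $i_1-1$ for later parts, and then correctly account for the one-part composition and the empty prefix at $j=n$. As a sanity check I would verify small cases. For $n=1$: $L_1=\theta(e_1)$. For $n=2$: $X_{P_2}=2e_2$, and indeed $e_2+(2-1)e_2\cdot1=2e_2$. For $n=3$: $X_{P_3}=3e_3+e_{21}$, matching $e_3+e_2X_{P_1}+2e_3$ with $X_{P_1}=e_1$.
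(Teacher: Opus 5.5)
Your proof is correct and rests on the same input as the paper's, namely Stanley's path generating function $\sum_{n\ge0}X_{P_n}z^n=\bigl(\sum_{j\ge0}e_jz^j\bigr)/\bigl(1-\sum_{j\ge2}(j-1)e_jz^j\bigr)$. The paper specializes this under~$\theta$, clears the denominator and compares coefficients of~$z^n$; this gives your symmetric-function recurrence directly and makes the detour through the composition expansion, including the bookkeeping of the one-part composition at $j=n$, unnecessary.
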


\begin{proof}
Specializing the path generating function of
\citet[Prop.~5.3]{Sta95} gives
\[
\sum_{n\ge 0}L_n z^n
=\frac{1+\sum_{j=2}^m\theta(e_j)z^j}
{1-\sum_{j=2}^m(j-1)\theta(e_j)z^j}.
\]
Comparing coefficients of~$z^n$ proves the formula.
\end{proof}

Let~$\mathbf e_i$ be the~$i$th standard column of~$\mathbb Q^m$,
and let the superscript~$\top$ denote transpose.
Then the outer product $\mathbf e_i\mathbf e_j^\top$
has a single nonzero entry~$1$
in row~$i$ and column~$j$.
Define the $m\times m$ matrix
\begingroup
\renewcommand{\arraystretch}{1.4}
\begin{equation}
\label[def]{def:A}
A=\brk4{
\begin{array}{c|cccc}
\mathbf 0&\multicolumn{4}{c}{I_{m-1}}
\\
\hline
(m-1)\theta(e_m)&(m-2)\theta(e_{m-1})&\dotsm&\theta(e_2)&0
\end{array}
},
\end{equation}
\endgroup
where~$I_{m-1}$ is the identity matrix of order~$m-1$.
The last row encodes the recurrence in \cref{lem:rec.L}.
Thus~$A$ is the \emph{companion matrix} of the characteristic polynomial
\[
\det(zI_m-A)
=z^m-\sum_{j=2}^m(j-1)\theta(e_j)z^{m-j}.
\]
In particular,
iterating
$\mathbf e_i^\top A=\mathbf e_{i+1}^\top$
from~$i=1$ gives
\begin{equation}
\label{e1:A.i-1=ei}
\mathbf e_1^\top A^{i-1}=\mathbf e_i^\top
\qquad\text{for $1\le i\le m$.}
\end{equation}
Put $\ell=(L_2,\,\dots,\,L_{m+1})^\top$ and define
\begin{equation}
\label[def]{def:M}
M_j=(A^2-\ell\mathbf e_1^\top)A^{j-1},
\qquad
M_{(b_1,\dots,b_s)}=M_{b_s}\cdots M_{b_1},
\qquad\text{and}\qquad
M_\epsilon=I_m.
\end{equation}
Define the \emph{side evaluation row}
\begin{equation}
\label[def]{def:r.ab}
r_{a,\mathbf b}=\mathbf e_1^\top A^{a-1}M_{\mathbf b}
\qquad\text{for $a\ge 1$.}
\end{equation}
Finally,
define the \emph{initial spider matrix}~$W_\theta$ by
\begin{equation}
\label[def]{def:W.initial}
(W_\theta)_{ij}=\theta(X_{S(i,j,1,1)})
\qquad\text{for $1\le i,j\le m$.}
\end{equation}
It is symmetric.
Note that $S(i,j,1,1)=T(i;\epsilon\,\vert\,\epsilon;j)$.

\begin{lemma}[Transfer invertibility]
\label{lem:inv}
The matrices~$A$ and~$M_j$ are invertible for every $j\ge 1$.
As a consequence,
$r_{a,\mathbf b}\ne 0$ for every positive integer~$a$
and possibly empty composition~$\mathbf b$.
\end{lemma}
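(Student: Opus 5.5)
Invertibility of~$A$ is immediate from the companion form. Expanding $\det A$ along the first column, the only nonzero entry there is $(m-1)\theta(e_m)$ in the bottom-left corner. Its complementary minor is $\det I_{m-1}=1$. Hence $\det A=\pm(m-1)\theta(e_m)$, which is nonzero because $m\ge 2$ and $\theta(e_m)\ne 0$. Since $M_j=(A^2-\ell\mathbf e_1^\top)A^{j-1}$, it then suffices to show that $B:=A^2-\ell\mathbf e_1^\top$ is invertible. Then each $M_j$, and hence each $M_{\mathbf b}$, is a product of invertible matrices. Finally, $r_{a,\mathbf b}=\mathbf e_1^\top A^{a-1}M_{\mathbf b}$ is the nonzero row $\mathbf e_1^\top$ multiplied by an invertible matrix, so $r_{a,\mathbf b}\ne 0$.

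For~$B$, I will use the matrix determinant lemma:
\[
\det(A^2-\ell\mathbf e_1^\top)=\det(A^2)\bigl(1-\mathbf e_1^\top A^{-2}\ell\bigr).
\]
So I need $\mathbf e_1^\top A^{-2}\ell\ne 1$. I will compute the row $\mathbf e_1^\top A^{-2}$ explicitly from the shift structure. By \cref{e1:A.i-1=ei}, $\mathbf e_i^\top A=\mathbf e_{i+1}^\top$ for $i<m$. The last row gives
\[
\mathbf e_m^\top A=(m-1)\theta(e_m)\mathbf e_1^\top+\sum_{k=2}^{m-1}(m-k)\theta(e_{m-k+1})\mathbf e_k^\top .
\]
Solving this for $\mathbf e_1^\top$ gives $\mathbf e_1^\top A^{-1}$ as an explicit combination of $\mathbf e_m^\top$ and $\mathbf e_{k-1}^\top$. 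Applying $A^{-1}$ once more gives $\mathbf e_1^\top A^{-2}$. Pairing the result with $\ell=(L_2,\dots,L_{m+1})^\top$ and using the recurrence of \cref{lem:rec.L} should collapse the sum.

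Concretely, I expect $\mathbf e_1^\top A^{-1}$ to act as a backward shift of the sequence $(L_n)$. For the vector $(L_{n},\dots,L_{n+m-1})$ one has $A$ acting as a forward shift, except for the inhomogeneous term $\theta(e_n)$ in \cref{lem:rec.L}. That term vanishes for $n>m$, so the homogeneous recurrence holds there. Thus $\mathbf e_1^\top A^{-2}\ell$ should equal a value like $L_0$ corrected by the boundary terms $\theta(e_j)$. The aim is to show it equals $0$ or some explicit quantity that is never $1$. An alternative route is to show directly that $\ker B=0$. Write $Bx=0$ as $A^2x=(\mathbf e_1^\top x)\ell$, that is, $x=(\mathbf e_1^\top x)A^{-2}\ell$. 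If $\mathbf e_1^\top x=0$ then $x=0$. Otherwise, normalize $\mathbf e_1^\top x=1$, which forces $\mathbf e_1^\top A^{-2}\ell=1$, and this is again the same scalar condition.

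The main obstacle is this scalar identity: computing $\mathbf e_1^\top A^{-2}\ell$ carefully in the presence of the inhomogeneous terms $\theta(e_j)$ for $2\le j\le m$. My first attempt is to extend $(L_n)$ backward by the homogeneous recurrence, with $L_0=1$ and $L_1=\theta(e_1)=0$. I will then track the discrepancy caused by the numerator $1+\sum\theta(e_j)z^j$ of the generating function. I expect the final value to be $\mathbf e_1^\top A^{-2}\ell=1-c$ with $c$ a nonzero multiple of $\theta(e_m)^{-1}$, or more likely that $\det B$ equals a power of $(m-1)\theta(e_m)$ times a nonzero constant.

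If the generic computation gets messy, a fallback is to note that $B$ is itself a companion-like matrix. Since $\ell\mathbf e_1^\top$ only modifies the first column, $B$ agrees with $A^2$ outside its first column. Its determinant can then be read off by expanding along that first column, where only the entries of $A^2\mathbf e_1-\ell$ appear.
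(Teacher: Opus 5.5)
\textbf{Gap: the key scalar is never computed.} Your reduction is sound and matches the paper's route. You get $\det A=\pm(m-1)\theta(e_m)$ and $M_j=M_1A^{j-1}$. The matrix determinant lemma then gives $\det M_1=\det(A^2)\,(1-v_1)$ with $v=A^{-2}\ell$, so everything hinges on showing $v_1=\mathbf e_1^\top A^{-2}\ell\neq 1$. That scalar is the entire nontrivial content of the lemma, and your proposal only records expectations about it. The first of your two guesses is also off: $\theta(e_m)$ cancels, and the correction to $1$ is the constant $1/(m-1)$.

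This step is not a formality, and your own heuristic shows why. If the recurrence of \cref{lem:rec.L} were homogeneous at $n=m$, then $A^{-1}$ would act as a pure backward shift on $(L_n)$. You would get $v_1=L_0=1$, and $A^2-\ell\mathbf e_1^\top$ would be singular. So you must actually compute the effect of the inhomogeneous term $\theta(e_m)$, and the proposal stops exactly there.

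\textbf{The missing computation.}
\begin{itemize}
\item Since $\theta(e_{m+1})=0$, the recurrence at $n=m+1$ is homogeneous. The last row of $A$ therefore gives $A(L_1,\dots,L_m)^\top=\ell$, so $v=A^{-1}(L_1,\dots,L_m)^\top$.
\item The first $m-1$ rows of $Av=(L_1,\dots,L_m)^\top$ give $v_{i+1}=L_i$ for $i<m$.
\item The last row then reads
\[
(m-1)\theta(e_m)\,v_1=L_m-\sum_{j=2}^{m-1}(j-1)\theta(e_j)L_{m-j}.
\]
\item At $n=m$, \cref{lem:rec.L} contributes both the inhomogeneous term $\theta(e_m)$ and the summand $(m-1)\theta(e_m)L_0$. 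Hence the right side equals $m\theta(e_m)$.
\item Thus $v_1=m/(m-1)$ and $1-v_1=-1/(m-1)$, which gives $\det M_1=-(m-1)\theta(e_m)^2\neq 0$.
\end{itemize}
In your row formulation this is the same computation: $\mathbf e_1^\top A^{-2}\ell=\mathbf e_1^\top A^{-1}(L_1,\dots,L_m)^\top$, and your explicit formula for $\mathbf e_1^\top A^{-1}$ produces the displayed expression. With this inserted, the rest of your argument ($M_{\mathbf b}$ invertible, hence $r_{a,\mathbf b}=\mathbf e_1^\top A^{a-1}M_{\mathbf b}\neq 0$) goes through unchanged, and the first-column-expansion fallback is unnecessary.
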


\begin{proof}
Expansion along the first column of $A$ gives
\[
\det A=(-1)^{m-1}(m-1)\theta(e_m)\ne 0.
\]
Put $v=A^{-2}\ell$.
The path recurrence gives $A v=(L_1,\dots,L_m)^\top$,
so $v_{i+1}=L_i$ for $i<m$.
The last row of this identity and \cref{lem:rec.L} at $n=m$ give
\[
(m-1)\theta(e_m)v_1
=L_m-\sum_{j=2}^{m-1}(j-1)\theta(e_j)L_{m-j}
=m\theta(e_m).
\]
Note that $M_1=A^2-\ell\mathbf e_1^\top
=A^2(I_m-v\mathbf e_1^\top)$.
The second factor differs from~$I_m$ only in its first column.
Expansion along this column gives its determinant as $1-v_1$.
Thus
\[
\det M_1
=\det A^2(1-v_1)
=-(m-1)\theta(e_m)^2\ne 0.
\]
Since $M_j=M_1A^{j-1}$,
all gap transfers are invertible.
By \cref{def:r.ab}, $r_{a,\mathbf b}\ne 0$.
\end{proof}

\citet[p.~1780]{KS25} call a star subgraph a \emph{leaf star}
if each of its edges is incident with a leaf of the ambient graph.
Let $T$ be a caterpillar.
A \emph{spine} of $T$ is a path containing all its nonleaf vertices
and having two leaves of $T$ as endpoints.
Given a chosen spine~$P$ of~$T$,
the \emph{internal leaf star} at a vertex~$v\in V(P)$
is the subgraph of~$T$ induced by~$v$
and all its leaf neighbors outside~$P$.
The vertex sets of these stars partition~$V(T)$.
For $T=T(a;\mathbf b\,\vert\,\mathbf c;d)$ with $a,d\ge 1$,
we choose~$P$ to be the union of the two defining paths.
The internal leaf star at $v_\ast$ has order~$3$,
and every other internal leaf star has order~$2$ or~$1$.

\begin{lemma}[Transfer formula]
\label{lem:transfer}
We have
$\theta\brk1{X_{T(a;\mathbf b\,\vert\,\mathbf c;d)}}
=r_{a,\mathbf b}W_\theta r_{d,\mathbf c}^\top$
for positive integers~$a$ and~$d$
and possibly empty compositions~$\mathbf b$ and~$\mathbf c$.
\end{lemma}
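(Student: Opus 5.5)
The plan is to prove the identity by induction on $\abs{\mathbf b}+\abs{\mathbf c}$, using only that $\theta$ is an algebra homomorphism and that $X$ is multiplicative on disjoint unions. By the symmetry of $W_\theta$ and of the construction under swapping the two sides, it suffices to build up the $u$-side while treating the $v$-side as a fixed rooted graph. The formula $r_{a,\mathbf b}W_\theta r_{d,\mathbf c}^\top$ says that $\theta(X_T)$ is bilinear in two ``state vectors''. Accordingly, I would define, for a rooted side $R$ attached to $v_\ast$ and for the fixed rest $H$ of the tree (the center star together with the other side), the column vector $h=(h_1,\dots,h_m)^\top$ with $h_i=\theta(X_{H\cup P_i})$. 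Here $P_i$ is a path of $i$ vertices hung at $v_\ast$. The goal is to show that $\theta(X_{H\cup R})=\rho_R\, h$ for a row $\rho_R$ depending only on $R$, and that $\rho_R=r_{a,\mathbf b}$.

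Step 1 treats the terminal. For $R=P_a$, I would show that $g_a=\theta(X_{H\cup P_a})$ satisfies the recurrence of \cref{lem:rec.L}, namely
\[
g_n=\sum_{j=2}^m(j-1)\theta(e_j)\,g_{n-j}
\]
for $n>m$. I expect this to follow from an edge-cutting relation for pendant paths. Either specialize Stanley's generating function argument (\cref{lem:rec.L}) in its rooted form, or apply the triple-deletion/deletion–near-contraction relation along the last edges of the pendant path. The vanishing $\theta(e_1)=0$ and $\theta(e_j)=0$ for $j>m$ kill the inhomogeneous terms once $n>m$. Since $A$ is the companion matrix of this recurrence, $g_a=\mathbf e_1^\top A^{a-1}h$ for all $a\ge1$: for $a\le m$ this is \cref{e1:A.i-1=ei}, and beyond that it holds by induction. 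Applying this on both sides yields the base case $\mathbf b=\mathbf c=\epsilon$, which is exactly the definition \cref{def:W.initial}, extended from $i,j\le m$ to arbitrary $a,d$.

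Step 2 is the gap transfer, and it is the main obstacle. Let $R$ consist of a first gap of length $b_1$ ending at the marked vertex $u_1$, which carries a pendant leaf, followed by a rooted remainder $R'$. I need the leaf-moving identity
\[
X_{H\cup R}=X_{H\cup R''}-X_{P_{?}}\cdot X_{\cdots},
\]
where $R''$ replaces the leaf at $u_1$ by lengthening the spine by two vertices. The correction term is a disjoint union of a path and the graph obtained after cutting at $u_1$. Under $\theta$, the correction produces the rank-one term $\ell\,\mathbf e_1^\top$, with $\ell=(L_2,\dots,L_{m+1})^\top$ recording the split-off path, while the lengthening produces $A^2$. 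Composing with the $A^{b_1-1}$ that propagates along the gap gives $\rho_R=\rho_{R'}M_{b_1}$ in the appropriate order, hence $\rho_R=\mathbf e_1^\top A^{a-1}M_{b_s}\cdots M_{b_1}$. I would first try to derive the leaf identity from the Orellana–Scott triple-deletion relation applied to the triangle-free configuration formed by the leaf edge and the two spine edges at $u_1$. The delicate point is bookkeeping: I must check that the path cut off has exactly the lengths $2,\dots,m+1$ appearing in $\ell$, and that the remaining piece is again of terminal type, so that its value is read off by $\mathbf e_1^\top$.

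Step 3 concludes. Once linearity in $h$ and the transfer $M_j$ are established on the $u$-side, I would apply the same argument on the $v$-side with the $u$-side frozen. This replaces $h$ by $W_\theta r_{d,\mathbf c}^\top$, completing the induction and giving the stated bilinear form.
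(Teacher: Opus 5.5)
Your route differs from the paper's and can be made to work, but Step~2 does not go through as you state it.

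\textbf{Correcting Step~2.} Let $w$ be the spine neighbour of $u_1$ inside $R'$, and let $y$ be the leaf at $u_1$. Apply triple deletion to the path $w\,u_1\,y$, with $wy$ a non-edge. Moving the edge $wu_1$ to $wy$ produces $H\cup R''$, and the relation has three terms, not two:
\[
X_{H\cup R}=X_{H\cup R''}+e_1X_{H\cup(R-y)}-X_{R'+y}\,X_{H\cup P_{b_1}}.
\]
Here $R'+y$ is $R'$ with $y$ hung at $w$, and $H\cup P_{b_1}$ is $H$ with the $b_1$-vertex path starting at $u_1$ hung at $v_\ast$. The middle term vanishes because $\theta(e_1)=0$.

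The split-off factor is not a path of length at most $m+1$. It is $R'$ plus one vertex, of unbounded size, so the check you propose would fail. What rescues the argument is your own hypothesis, applied in other contexts:
\begin{itemize}
\item With $H$ a single vertex one has $h=\ell$, so $\theta(X_{R'+y})=\rho_{R'}\ell$.
\item With $H$ extended by a pendant path of $b_1+1$ vertices and rerooted at its far end, Step~1 gives the vector $A^{b_1+1}h$. Hence $\theta(X_{H\cup R''})=\rho_{R'}A^{b_1+1}h$.
\item Step~1 also gives $\theta(X_{H\cup P_{b_1}})=\mathbf e_1^\top A^{b_1-1}h$.
\end{itemize}
Together these give $\rho_{R'}(A^2-\ell\mathbf e_1^\top)A^{b_1-1}h=\rho_{R'}M_{b_1}h$, as required. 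Two consequences follow. The induction hypothesis must be quantified over all rooted graphs $H$, not only the fixed center-plus-other-side. And Step~1, which you only assert, must be proved for every rooted $H$, for instance from a rooted version of Stanley's power-sum expansion.

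\textbf{Comparison with the paper.} The paper chooses $x_1,\dots,x_m$ with $e_j(x)=\theta(e_j)$, so $\theta(X_G)$ becomes a sum over proper $m$-colorings. It tracks color rows under two transfers: $P=KD$ appends a spine vertex, and $Q=-KD^2$ appends a vertex with a leaf. Then:
\begin{itemize}
\item $\sum x_i=0$ gives $Q=P^2+\mathbf x\mathbf x^\top$, which is exactly the colored shadow of your corrected triple-deletion relation under $\theta$.
\item Cayley--Hamilton for $P$ plays the role of your Step~1.
\item The intertwiner $S$, with $SP=AS$ and $SQ=M_1S$, converts everything to $A$ and $M_1$ at once.
\item The middle matrix $SKD^3KS^\top$ is identified with $W_\theta$ by evaluating at $r_{i,\epsilon}=\mathbf e_i^\top$.
\end{itemize}
No induction and no bookkeeping of contexts are needed.

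Your version stays inside $\Lambda$ with rational data and standard graph identities, and avoids complex roots. It also explains both pieces of $M_1$ combinatorially: $A^2$ is the leaf absorbed into the spine, and $\ell\mathbf e_1^\top$ is the split-off piece evaluated against a one-vertex context. The price is a separate proof of the pendant-path recurrence for arbitrary rooted graphs, which the paper gets for free.
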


\begin{proof}
Let $T=T(a;\mathbf b\,\vert\,\mathbf c;d)$.
By Vieta's formulas,
there exist~$x_1,\dots,x_m\in\mathbb C$ such that
\[
e_j(x_1,\dots,x_m)=\theta(e_j)
\qquad\text{for $1\le j\le m$}.
\]
In particular, $\sum_i x_i=\theta(e_1)=0$.
Put
\[
\mathbf x=
\begin{pmatrix}
x_1
\\
x_2
\\
\vdots
\\
x_m
\end{pmatrix},
\qquad
D=
\begin{pmatrix}
x_1&&&
\\
&x_2&\multicolumn{2}{c}{\raisebox{.6ex}[0pt][0pt]{\huge$0$}}
\\
\multicolumn{2}{c}{\raisebox{-.6ex}[0pt][0pt]{\huge$0$}}&\ddots&
\\
&&&x_m
\end{pmatrix},
\qquad\text{and}\qquad
K=
\begin{pmatrix}
0&&&
\\
&0&\multicolumn{2}{c}{\raisebox{.6ex}[0pt][0pt]{\huge$1$}}
\\
\multicolumn{2}{c}{\raisebox{-.6ex}[0pt][0pt]{\huge$1$}}&\ddots&
\\
&&&0
\end{pmatrix},
\]
where~$K$ has order~$m$.
Let $P=KD$ and $Q=-KD^2$.
Then
\begingroup
\renewcommand{\arraystretch}{1.4}
\[
P=
\begin{pmatrix}
0&x_2&x_3&\dots&x_m
\\
x_1&0&x_3&\dots&x_m
\\
x_1&x_2&0&\dots&x_m
\\
\vdots&\vdots&\vdots&\ddots&\vdots
\\
x_1&x_2&x_3&\dots&0
\end{pmatrix}
\qquad\text{and}\qquad
Q=
\begin{pmatrix}
0&-x_2^2&-x_3^2&\dots&-x_m^2
\\
-x_1^2&0&-x_3^2&\dots&-x_m^2
\\
-x_1^2&-x_2^2&0&\dots&-x_m^2
\\
\vdots&\vdots&\vdots&\ddots&\vdots
\\
-x_1^2&-x_2^2&-x_3^2&\dots&0
\end{pmatrix}.
\]
\endgroup
Use the chosen spine~$P$ of~$T$.
Build either side from its terminal toward the center,
one internal leaf star at a time.
For the subtree~$H$ of a side constructed so far,
let~$v$ be the last spine vertex reached.
We introduce the \emph{color row} of~$H$,
defined by $\rho(H)=(\rho_1,\dots,\rho_m)$,
where
\[
\rho_i=\sum_{\substack{\kappa\colon V(H)\to[m]\text{ proper}
\\
\kappa(v)=i}}
\prod_{u\in V(H)}x_{\kappa(u)}.
\]
Write the all-one column of length~$m$ as~$\mathbf 1$.
Then
\[
\theta(X_H)
=X_H(x_1,\dots,x_m)
=\rho_1+\dots+\rho_m
=\rho(H)\mathbf 1.
\]

Appending a spine vertex alone transfers~$\rho=\rho(H)$ to~$\rho P$,
since the updated~$j$th coordinate is
\[
x_j\sum_{i\ne j}\rho_i
=\sum_{i=1}^m \rho_i P_{ij}
=(\rho P)_j.
\]
In particular,
\begin{equation}
\label{pf:L=xP.n-1}
L_n
=\theta(X_{P_n})
=\mathbf x^\top P^{n-1}\mathbf 1
\qquad\text{for $n\ge 1$}.
\end{equation}
On the other hand,
appending an internal leaf star of order~$2$ transfers~$\rho$ to~$\rho Q$,
since this star contributes the weight
$x_i\sum_{j\ne i}x_j=-x_i^2$ when the spine vertex has color~$i$.

We now relate these transfers to the matrices~$A$ and~$M_1$.
The~$i$th column of $zI_m-P$ is $(z+x_i)\mathbf e_i-x_i\mathbf 1$.
Expand the determinant by multilinearity in the columns.
Any term choosing~$\mathbf 1$ in at least two columns vanishes,
since those columns are proportional.
Thus
\[
\det(zI_m-P)
=\prod_{i=1}^m(z+x_i)
-\sum_{i=1}^m x_i\prod_{k\ne i}(z+x_k).
\]
For each set~$I\subseteq[m]$ of size~$j$,
the product $z^{m-j}\prod_{i\in I}x_i$
occurs once in the first product and~$j$ times in the sum.
Summing over~$I$ gives
\begin{equation}
\label{CharPoly.P}
\det(zI_m-P)
=z^m-\sum_{j=2}^m(j-1)\theta(e_j)z^{m-j}.
\end{equation}
By the \emph{Cayley--Hamilton theorem}
(see \citet[Thm.~2.4.3.2]{HJ13B}), \cref{CharPoly.P} gives
\begin{equation}
\label{Cayley-Hamilton:P}
P^m=\sum_{j=2}^m(j-1)\theta(e_j)P^{m-j}.
\end{equation}
Define the $m\times m$ matrix~$S$ to have~$i$th row $\mathbf x^\top P^{i-1}$.
We claim that
\begin{equation}
\label{SP=AS}
SP=AS.
\end{equation}
In fact,
for $1\le i<m$,
the $i$th row of~$AS$ is the $(i+1)$st row of~$S$,
namely $\mathbf x^\top P^i$,
which is also the $i$th row of~$SP$.
By \cref{def:A,Cayley-Hamilton:P},
the last row of~$AS$ is
\[
\sum_{j=2}^m(j-1)\theta(e_j)\mathbf x^\top P^{m-j}
=\mathbf x^\top P^m,
\]
which is the last row of~$SP$.
This proves the claimed \cref{SP=AS}.

Since $P\mathbf 1=-\mathbf x$,
\cref{pf:L=xP.n-1} gives
\[
(S \mathbf x)_i
=\mathbf x^\top P^{i-1}\mathbf x
=-\mathbf x^\top P^i\mathbf 1
=-L_{i+1}
\qquad\text{for $1\le i\le m$}.
\]
Thus $S \mathbf x=-\ell$.
We also have $Q=P^2+\mathbf x \mathbf x^\top$
and $\mathbf x^\top=\mathbf e_1^\top S$.
As a consequence,
\[
S Q
=SP^2+S \mathbf x \mathbf x^\top
=A^2 S-\ell \mathbf e_1^\top S
=M_1 S.
\]
Applying \cref{SP=AS} repeatedly, we find
\[
SQP^{t-1}
=M_1SP^{t-1}
=M_1A^{t-1}S
=M_tS
\qquad\text{for $t\ge 1$.}
\]
Starting with the terminal and appending the internal leaf stars,
we obtain the color row of the $\mathbf b$-side:
\begin{align*}
\mathbf x^\top P^{a-1}(QP^{b_s-1})\cdots(QP^{b_1-1})
&=\mathbf e_1^\top A^{a-1}S(QP^{b_s-1})\cdots(QP^{b_1-1})
\\
&=\mathbf e_1^\top A^{a-1}M_{b_s}\cdots M_{b_1}S
=r_{a,\mathbf b}S.
\end{align*}
The other side similarly has color row~$r_{d,\mathbf c}S$.
Fix the color~$i$ of~$v_\ast$.
Then the weight of the internal leaf star at~$v_\ast$ is $x_i(\sum_{j\ne i}x_j)^2=x_i^3$.
Therefore,
the contribution of the $\mathbf b$-side to~$X_T$ is
\[
\sum_j(r_{a,\mathbf b}S)_jK_{ji}
=(r_{a,\mathbf b}SK)_i.
\]
Similarly,
the second side contributes $(KS^\top r_{d,\mathbf c}^\top)_i$.
Summing over the color~$i$ at~$v_\ast$ gives
\begin{equation}
\label{pf:XT:rWr}
\theta(X_T)
=\sum_{i=1}^m(r_{a,\mathbf b}SK)_i\,x_i^3\,
(KS^\top r_{d,\mathbf c}^\top)_i
=r_{a,\mathbf b}M r_{d,\mathbf c}^\top,
\qquad\text{where $M=S K D^3 K S^\top$.}
\end{equation}
Let $1\le i,j\le m$.
By \labelcref{e1:A.i-1=ei,pf:XT:rWr},
\[
M_{ij}
=\mathbf e_i^\top M\mathbf e_j
=r_{i,\epsilon} M r_{j,\epsilon}^\top
=\theta(X_{T(i;\epsilon\,\vert\,\epsilon;j)})
=\theta(X_{S(i,j,1,1)})
=(W_\theta)_{ij}.
\]
Hence $M=W_\theta$.
\end{proof}

\section{A one-cone proof of the nonadjacent case}
\label{sec:nonadj}

In this section,
we treat caterpillars whose center has no trivalent neighbor,
see \cref{prop:nonadj}.
We apply the framework of \cref{sec:framework}
with $m=4$ and $\theta=\varphi$,
where
\[
\varphi(e_2)=1,
\qquad
\varphi(e_3)=1/2,
\qquad\text{and}\qquad
\varphi(e_4)=1/4.
\]
By \cref{def:A,def:M,def:W.initial},
\begingroup
\setlength{\arraycolsep}{4pt}
\[
A=
\begin{pmatrix}
0&1&0&0
\\
0&0&1&0
\\
0&0&0&1
\\
3/4&1&1&0
\end{pmatrix},
\quad
M_1=
\begin{pmatrix}
-2&0&1&0
\\
-3/2&0&0&1
\\
-9/4&1&1&0
\\
-7/2&3/4&1&1
\end{pmatrix},
\quad\text{and}\quad
W_\varphi=-\frac1{32}
\begin{pmatrix}
80&72&124&176
\\
72&44&80&104
\\
124&80&152&204
\\
176&104&204&265
\end{pmatrix}.
\]
\endgroup
Direct multiplication gives
\begin{equation}
\label{eq:fourth-order-transfer-polynomial}
M_1^4
=\frac32M_1^2+\frac12M_1+\frac3{16}I_4.
\end{equation}
The \emph{row cone} generated by a matrix~$F$,
denoted by~$\cone{F}$,
is the set of all nonnegative linear combinations of its rows.
Let $D_j=M_1^j A^2$ for $0\le j\le 3$, and
\[
F_\varphi=
\begin{pmatrix}
2&3&-3&1
\\
1&1&2&-1
\\
-1&-4&-1&4
\\
-3&4&3&-1
\end{pmatrix}.
\]
The following certificate gives a row cone invariant under these four matrices.
All matrix and vector inequalities in this paper are entrywise.

\begin{lemma}
\label{lem:cone.nonadj}
We have $\mathbf e_2^\top\in\cone{F_\varphi}$,
$\cone{F_\varphi}D_j\subseteq\cone{F_\varphi}$ for each $j\in\{0,1,2,3\}$,
and
$F_\varphi W_\varphi F_\varphi^\top<0$.
\end{lemma}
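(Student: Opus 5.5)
This is a finite certificate, so the plan is exact rational verification of three claims, each by exhibiting explicit nonnegative witnesses.

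\emph{First claim.} To show $\mathbf e_2^\top\in\cone{F_\varphi}$, I will solve $y^\top F_\varphi=\mathbf e_2^\top$ for $y\in\mathbb Q^4$. Since $F_\varphi$ is square, this means computing $y^\top=\mathbf e_2^\top F_\varphi^{-1}$, after first checking $\det F_\varphi\ne 0$. I then check that $y\ge 0$. If a coordinate were negative the claim would fail, so this step is a direct test of the stated certificate.

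\emph{Second claim.} Cone invariance $\cone{F_\varphi}D_j\subseteq\cone{F_\varphi}$ is equivalent to each row of $F_\varphi D_j$ lying in $\cone{F_\varphi}$. In matrix form, it is equivalent to the existence of a matrix $N_j\ge 0$ with
\[
F_\varphi D_j=N_jF_\varphi .
\]
Because $F_\varphi$ is invertible, $N_j$ is forced:
\[
N_j=F_\varphi D_jF_\varphi^{-1}=F_\varphi M_1^jA^2F_\varphi^{-1}.
\]
So the task is to compute $N_0=F_\varphi A^2F_\varphi^{-1}$ and $R=F_\varphi M_1F_\varphi^{-1}$. Then $N_j=R^jN_0$ for $j=1,2,3$, and I verify entrywise nonnegativity of the four matrices $N_0,\dots,N_3$.

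A shortcut would be available if $R$ itself were nonnegative, since then $N_j\ge0$ follows from $N_0\ge0$. I do not expect this, because $M_1$ alone need not preserve the cone; that is presumably why $D_j$ ends in $A^2$. So I plan to compute all four matrices directly. \cref{eq:fourth-order-transfer-polynomial} is consistent with stopping at $j\le 3$, since higher powers of $M_1$ reduce to lower ones, but it is not needed for this verification.

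\emph{Third claim.} For $F_\varphi W_\varphi F_\varphi^\top<0$, I compute the symmetric $4\times4$ matrix $F_\varphi W_\varphi F_\varphi^\top$ exactly. That is ten distinct entries, using the explicit $W_\varphi$ with its factor $-1/32$. I then check that every entry is strictly negative. Equivalently, after clearing the factor $-1/32$, every entry of $F_\varphi\widetilde WF_\varphi^\top$ must be strictly positive, where $\widetilde W$ is the integer matrix.

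The argument contains no conceptual obstacle. The main risk is arithmetic error in $F_\varphi^{-1}$ and in the products $N_j$. I would carry the computation out in exact rational arithmetic, by hand or with a computer algebra check, and record $\det F_\varphi$ and the matrices $N_j$ explicitly so the reader can confirm $F_\varphi D_j=N_jF_\varphi$ by multiplication alone. That identity is then the self-contained certificate: it avoids the need to trust the inverse, since only integer or rational products must be checked.
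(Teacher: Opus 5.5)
Your plan is the paper's proof. The paper exhibits the coefficient row $(31,11,1,24)/196$ for $\mathbf e_2^\top$. It records the intertwiners $D_j'$, which are your $N_j$ and are forced since $\det F_\varphi=196\neq 0$, satisfying $F_\varphi D_j=D_j'F_\varphi$, and notes they are entrywise (even strictly) positive. Finally it writes out $F_\varphi W_\varphi F_\varphi^\top$ to see that every entry is negative, so what remains for you is only to carry out the exact arithmetic.
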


\begin{proof}
Direct calculation gives
$\mathbf e_2^\top
=(31,\,11,\,1,\,24)F_\varphi/196\in\cone{F_\varphi}$
and $F_\varphi D_j=D_j'F_\varphi$,
where the matrices~$D_j'$ are all strictly positive
and defined respectively by
\begin{align*}
D_0'&=\frac1{784}
\begin{pmatrix}
76&8&856&356
\\
209&1149&247&48
\\
113&489&187&372
\\
912&1076&668&156
\end{pmatrix},
&
D_1'&=\frac1{784}
\begin{pmatrix}
655&87&293&368
\\
634&10&90&4
\\
709&397&339&100
\\
851&479&1077&564
\end{pmatrix},
\\[6pt]
D_2'&=\frac1{3136}
\begin{pmatrix}
2246&3130&4454&2036
\\
165&1753&3331&1544
\\
1575&1855&3661&1820
\\
4716&3096&5520&3316
\end{pmatrix},
&\text{and}\quad
D_3'&=\frac1{6272}
\begin{pmatrix}
10033&3617&7171&4916
\\
6995&579&1585&2172
\\
7952&3024&5656&4032
\\
13671&9359&15533&8820
\end{pmatrix}.
\end{align*}
Thus every row of~$F_\varphi D_j$
belongs to~$\cone{F_\varphi}$.
Finally,
\[
F_\varphi W_\varphi F_\varphi^\top
=-\frac1{32}
\begin{pmatrix}
389&475&292&411
\\
475&581&476&505
\\
292&476&272&112
\\
411&505&112&17
\end{pmatrix}<0.\qedhere
\]
\end{proof}

\begin{proposition}
\label{prop:nonadj}
\Cref{thm:e-.caterpillar} holds when
the center of the caterpillar has no trivalent neighbor.
\end{proposition}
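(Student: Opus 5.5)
Argue by contradiction. Suppose $T$ is an $e$-positive caterpillar with $\Delta(T)\ge 4$ whose center has no trivalent neighbor. By \cref{prop:Wolfgang}, $T$ is CPET. By \cref{prop:T.abcd}, $T=T(a;\mathbf b\,\vert\,\mathbf c;d)$ with $a,d$ positive even and all $b_i,c_j$ odd. The hypothesis that the center has no trivalent neighbor means $u_1$ and $v_1$ (when they exist) are not adjacent to $v_\ast$, so $b_1\ge 3$ and $c_1\ge 3$. The terminal neighbor of $v_\ast$ when $\mathbf b=\epsilon$ has degree $2$, since $a\ge 2$. Apply the specialization $\varphi$. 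It satisfies $\varphi(e_1)=0$ and $\varphi(e_j)\ge 0$ for all $j$, so $e$-positivity would force $\varphi(X_T)\ge 0$. By \cref{lem:transfer}, $\varphi(X_T)=r_{a,\mathbf b}W_\varphi r_{d,\mathbf c}^\top$, so it suffices to prove that this form is negative.

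The key step is to show that $r_{a,\mathbf b}\in\cone{F_\varphi}$ for all such parameters. Using $\mathbf e_1^\top A=\mathbf e_2^\top$ and $M_j=M_1A^{j-1}=M_1(A^2)^{(j-1)/2}$ for odd $j$, I would write
\[
r_{a,\mathbf b}=\mathbf e_2^\top (A^2)^{k_0}\,M_1(A^2)^{k_s}\,M_1(A^2)^{k_{s-1}}\cdots M_1(A^2)^{k_1},
\]
where $k_0=(a-2)/2$ and $k_i=(b_i-1)/2$. Here $k_1\ge 1$ because $b_1\ge 3$, while the intermediate $k_i$ may vanish. Since the word ends in $A^2$, I can regroup everything after $\mathbf e_2^\top$ into blocks of the form $M_1^{j}A^2$ with $j\ge 0$. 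Indeed, each maximal run $M_1^j$ is followed by at least one $A^2$, and any extra copies of $A^2$ are blocks $D_0$. The case $\mathbf b=\epsilon$ is just $\mathbf e_2^\top D_0^{k_0}$.

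For $j\ge 4$, the relation $M_1^4=\frac32M_1^2+\frac12M_1+\frac3{16}I_4$ has nonnegative coefficients. Multiplying it by $M_1^{j-4}A^2$ and inducting on $j$ shows that $M_1^jA^2$ is a nonnegative combination of $D_0,\dots,D_3$. Consequently $\cone{F_\varphi}M_1^jA^2\subseteq\cone{F_\varphi}$ for every $j\ge0$, by \cref{lem:cone.nonadj}. Starting from $\mathbf e_2^\top\in\cone{F_\varphi}$ and applying the blocks one at a time gives $r_{a,\mathbf b}\in\cone{F_\varphi}$. The same argument gives $r_{d,\mathbf c}\in\cone{F_\varphi}$.

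To conclude, write $r_{a,\mathbf b}=\alpha F_\varphi$ and $r_{d,\mathbf c}=\beta F_\varphi$ with $\alpha,\beta\ge 0$. Both vectors are nonzero by \cref{lem:inv}. Then
\[
\varphi(X_T)=\alpha\,(F_\varphi W_\varphi F_\varphi^\top)\,\beta^\top<0,
\]
because every entry of $F_\varphi W_\varphi F_\varphi^\top$ is strictly negative and $\alpha,\beta$ are nonzero and nonnegative. This contradicts $\varphi(X_T)\ge0$.

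The main point needing care is the regrouping into blocks $M_1^jA^2$. It requires the word to end in $A^2$, which is exactly where the nonadjacency hypothesis $b_1,c_1\ge3$ enters. It also requires the gap-length convention that $b_1=1$ corresponds to adjacency, which I would double-check against \cref{fig:T.abcd} and \cref{def:M}.
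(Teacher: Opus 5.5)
Your proposal is correct and follows the paper's own proof. Both arguments reduce via Wolfgang's criterion and \cref{prop:T.abcd}, write $r_{a,\mathbf b}=\mathbf e_2^\top V$ with $V$ ending in $A^2$ because $b_1,c_1\ge 3$, split $V$ into factors $M_1^{u}A^2$ that the quartic relation for $M_1$ makes nonnegative combinations of $D_0,\dots,D_3$, and conclude from \cref{lem:cone.nonadj} and \cref{lem:inv}; the only differences are cosmetic (you argue by contradiction and spell out the induction on $j$ that the paper leaves implicit).
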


\begin{proof}
By \cref{prop:Wolfgang},
it suffices to treat the caterpillars described in \cref{prop:T.abcd}.
Write $T=T(a;\mathbf b\,\vert\,\mathbf c;d)$
with $\mathbf b=(b_1,\dots,b_s)$ having odd parts~$b_i$
and with even~$a\ge 2$.
Write $b_i=2k_i+1$ with $k_i\in\mathbb Z_{\ge 0}$.
Since $v_\ast$ has no trivalent neighbor,
we have $k_1\ge 1$ when $s\ge 1$.
By \labelcref{e1:A.i-1=ei},
$\mathbf e_1^\top A=\mathbf e_2^\top$.
By \cref{def:r.ab},
\begin{equation}
\label{pf:r.ab}
r_{a,\mathbf b}
=\mathbf e_2^\top A^{a-2}
(M_1 A^{2k_s})\cdots(M_1 A^{2k_1}).
\end{equation}
Thus $r_{a,\mathbf b}=\mathbf e_2^\top V$ for a word~$V$
in~$A^2$ and~$M_1$ that is either empty or ends with~$A^2$.
Splitting~$V$ after each occurrence of~$A^2$ gives
\[
V=(M_1^{u_1}A^2)\cdots(M_1^{u_\ell}A^2),
\qquad\text{where $\ell\ge 0$ and $u_i\ge 0$ for $1\le i\le \ell$.}
\]
By \cref{eq:fourth-order-transfer-polynomial},
each factor is a nonnegative linear combination of~$D_0=A^2$,
$D_1$,
$D_2$,
and~$D_3$.
Thus
$r_{a,\mathbf b}\in\cone{F_\varphi}$
by \cref{lem:cone.nonadj}.
The same argument applies to the other side.
We may therefore write
$r_{a,\mathbf b}=\lambda F_\varphi$
and
$r_{d,\mathbf c}=\mu F_\varphi$
for nonnegative rows~$\lambda$ and~$\mu$.
By \cref{pf:r.ab,lem:inv},
the side evaluation rows $r_{a,\mathbf b}$ and $r_{d,\mathbf c}$ are nonzero.
Thus~$\lambda$ and~$\mu$ are nonzero.
Hence \cref{lem:transfer,lem:cone.nonadj} give
\[
\varphi(X_T)
=\lambda(F_\varphi W_\varphi F_\varphi^\top)\mu^\top<0,
\]
which rules out $e$-positivity.
\end{proof}

\section{A two-cone proof of the adjacent case}
\label{sec:adj}

In this section,
we treat the remaining case of \cref{thm:e-.caterpillar}.

\begin{proposition}
\label{prop:adj}
\Cref{thm:e-.caterpillar} holds when
the center of the caterpillar has a trivalent neighbor.
\end{proposition}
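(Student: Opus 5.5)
By \cref{prop:Wolfgang} and \cref{prop:T.abcd}, it suffices to show $\theta(X_T)<0$ for some admissible specialization~$\theta$, where $T=T(a;\mathbf b\,\vert\,\mathbf c;d)$ with $a,d$ even and all gaps odd. "Center has a trivalent neighbor" means $b_1=1$ or $c_1=1$. Write $b_i=2k_i+1$ as in the proof of \cref{prop:nonadj}. Then
\[
r_{a,\mathbf b}=\mathbf e_2^\top A^{a-2}(M_1A^{2k_s})\cdots(M_1A^{2k_1}).
\]
If $k_1=0$, this row ends with one or more factors~$M_1$ not followed by~$A^2$. So the side row is $\mathbf e_2^\top V M_1^{u}$ with $u\ge 1$ and $V$ a word of the form treated in \cref{prop:nonadj}. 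The $\varphi$-cone $\cone{F_\varphi}$ controls $\mathbf e_2^\top V$, but is probably not invariant under a trailing $M_1^u$. Indeed, the paper announces "two cones" for this section and a second specialization~$\psi$.

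\textbf{Plan.} Sides are \emph{type~N} (no trailing $M_1$, i.e.\ $s=0$ or $k_1\ge1$) or \emph{type~J} ($k_1=0$). A type-J row is $\mathbf e_2^\top V M_1^{u}$ with $u\ge1$. By \cref{eq:fourth-order-transfer-polynomial}, if we work with $\varphi$, then $M_1^u$ for $u\ge1$ is a nonnegative combination of $M_1,M_1^2,M_1^3$ (and $I$, which is excluded only after reducing $u\ge4$ carefully: $M_1^4=\tfrac32M_1^2+\tfrac12M_1+\tfrac3{16}I$ reintroduces~$I$). So I would instead take a second specialization $\psi$, with $m$ possibly larger, chosen so that its $M_1$ satisfies a relation $M_1^{p}=\sum_{1\le q<p}\gamma_qM_1^q$ with $\gamma_q\ge0$ and no identity term. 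Alternatively, I would define the second cone as $\cone{G}$ with $G$ spanned by rows $F M_1^{q}$ for $q=1,\dots,p-1$, and verify $\cone{G}M_1\subseteq\cone{G}$ by exact nonnegative factorization $G M_1=G'G$.

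Concretely, under~$\psi$ I would certify three things by exact rational arithmetic, as in \cref{lem:cone.nonadj}.
\begin{enumerate}
\item A cone $\cone{F_\psi}$ that contains $\mathbf e_2^\top$ and is invariant under $A^2$ and the $D_j=M_1^jA^2$; it contains all type-N rows.
\item A cone $\cone{G_\psi}\supseteq\cone{F_\psi}M_1$ that is invariant under~$M_1$; it contains all type-J rows.
\item Strict negativity of the cross forms, $G_\psi W_\psi G_\psi^\top<0$ and $G_\psi W_\psi F_\psi^\top<0$.
\end{enumerate}
The case where both sides are type~N is already covered by \cref{prop:nonadj}. Nonvanishing of the coefficient rows follows from \cref{lem:inv}, exactly as before. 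Then \cref{lem:transfer} gives $\psi(X_T)<0$, and since $\psi(e_j)\ge0$, $X_T$ cannot be $e$-positive.

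The count of "three cones" suggests one more refinement. Possibly the $J$–$N$ mixed case uses $\varphi$ with $\cone{F_\varphi}$ and a second $\varphi$-cone for type~J, while $\psi$ handles $J$–$J$ with a single cone. I would organize the search accordingly: first test whether $\cone{F_\varphi}M_1$ together with its $M_1$-orbit closes into a finite cone with negative pairing against $\cone{F_\varphi}$; if the $J$–$J$ pairing fails to be negative under~$\varphi$, switch to~$\psi$ for that case.

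\textbf{Main obstacle.} The hard step is finding $\psi$ and generator matrices so that both invariance and strict negativity hold simultaneously. This is a computer search: iterate the orbit of generators under the transfer matrices, prune to extreme rays, and then check signs. It has no guarantee of success, and the choice of $\psi(e_j)$ will require experimentation.
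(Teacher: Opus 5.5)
Your proposal has a genuine gap. The certificates are the whole content of the proposition: the specialization $\psi$, the generator matrices, the invariance factorizations and the sign checks. You leave all of them to an open-ended search. More seriously, that search aims at the wrong target.

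You use only the parity information from \cref{prop:T.abcd}. Your cones would therefore have to give $\psi(X_T)<0$ for \emph{every} adjacent caterpillar with even terminals and odd gaps, including the many that are not CPET. Such caterpillars can have positive evaluations under the available specializations. Since $r_{2,(1)}=\mathbf e_4^\top-L_3\mathbf e_1^\top$, \cref{lem:transfer} gives $\varphi(X_{T(2;(1)\,\vert\,\epsilon;2)})=(W_\varphi)_{42}-L_3(W_\varphi)_{12}=(-104+108)/32=1/8>0$. That caterpillar has one side of type~J and one of type~N, so your fallback of treating mixed pairs with $\cone{F_\varphi}$ fails. For the paper's $\psi$, by my computation (transfer formula cross-checked by edge-cover expansion), $\psi(X_{T(2;(1)\,\vert\,(1);2)})=(W_\psi)_{44}-2L_3(W_\psi)_{14}+L_3^2(W_\psi)_{11}\approx 0.233>0$. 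Both sides there are of type~J, so $G_\psi W_\psi G_\psi^\top<0$ cannot hold for any cone containing $\mathbf e_2^\top M_1$. Neither caterpillar is CPET: they have no connected partition of type $3^3$, resp.\ $3^32$. So \cref{thm:e-.caterpillar} is not contradicted. But nothing suggests that one finite-support specialization is negative on the whole parity family, and the paper does not claim one.

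What is missing is the further use of the CPET hypothesis. \Cref{lem:bs>=a+3} uses obstruction sums to show that a terminal of length $2$ or $4$ forces $\mathbf b\ne\epsilon$ and $b_s\ge a+3$. Hence every side row starts from one of the seeds $\delta=r(1^6)$, $\varepsilon=r(1^2\,2\,1^4)$ or $\zeta A=r(1^4\,2\,1^4)$, never from $\mathbf e_2^\top$ or $\mathbf e_4^\top$. This yields a single cone $\cone{H}$ invariant under both $A^2$ and $M_1$, so trailing factors $M_1$ cause no difficulty at all. Negativity, however, is certified only for $HW_\psi J^\top$, not for $HW_\psi H^\top$. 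The second side is placed in the smaller cone $\cone{J}$ by \cref{lem:adj.block3,lem:coneJ.adj}. A connected partition of type $3^{N/3}$, $4\,3^{(N-4)/3}$ or $3^{(N-2)/3}2$ forces one side factor to be a concatenation of blocks of size~$3$, and such a side's row lies in $\cone{J}$. The relevant dichotomy is therefore not whether a side ends with $M_1$, but whether it splits into blocks of size~$3$. Without this input you have no reason to expect the finite certificate you are searching for to exist.
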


We will prove \cref{prop:adj} in three steps.
First, 
we encode each remaining caterpillar by a word
and use a connected partition to obtain a side consisting of blocks of size~$3$.
Second, 
we prove the boundary restrictions needed to choose initial prefixes
of the side factors.
Finally, 
we use two row cones to obtain the required negative evaluations.

By \cref{prop:Wolfgang},
it suffices to consider a CPET caterpillar $T=T(a;\mathbf b\mid \mathbf c;d)$
as in \cref{prop:T.abcd}.
Since the center has a trivalent neighbor,
we may suppose that $b_1=1$ by symmetry.
The order of~$T$ is the odd number
\begin{equation}
\label{N.abcd}
N=a+d+\sum_i(b_i+1)+\sum_j(c_j+1)+3.
\end{equation}
Let~$P$ be the chosen spine of~$T$.
Unless otherwise stated,
we read along~$P$ from the endpoint on the $\mathbf b$-side
toward the endpoint on the $\mathbf c$-side.
List its vertices in this order as~$z_1$,
$\dots$,
and~$z_k$.
For $1\le i\le k$,
let~$G_i$ be the internal leaf star at~$z_i$ relative to~$P$.
Define
\[
w(T)=w_1w_2\cdots w_k,\quad\text{where $w_i=\abs{V(G_i)}$.}
\]
We define the two \emph{side factors}~$\beta$ and~$\gamma$ by
$w(T)=\beta3\overline{\gamma}$,
where
\begin{equation}
\label[def]{beta:gamma}
\beta=1^a2 1^{b_s-1}\cdots2 1^{b_1-1}
\qquad\text{and}\qquad
\overline{\gamma}=1^{c_1-1}2\cdots1^{c_t-1}2 1^d.
\end{equation}
Since $b_1=1$,
the last letter of~$\beta$ is~$2$.

Let~$\lambda$ be a partition with all parts at least~$2$.
Let~$\pi$ be a type-$\lambda$-connected partition of~$T$.
Every leaf lies in the same block of~$\pi$ as its neighbor.
Connectivity then forces each block~$B$
to be a union of the vertex sets $V(G_i)$ for consecutive indices~$i$.
It is represented by a factor~$z$ of~$w(T)$ such that $\abs{z}=\abs{B}$.
We call~$z$ a \emph{block} of~$w(T)$.

Since the center~$v_\ast$ has exactly two leaves,
the internal leaf star containing~$v_\ast$ has order~$3$.
Since~$T$ is CPET,
it has a connected partition of type
\[
\lambda=
\begin{dcases*}
3^{N/3}, & if $N\equiv 0\pmod3$,
\\
4\,3^{(N-4)/3}, & if $N\equiv 1\pmod3$,
\\
3^{(N-2)/3}2, & if $N\equiv 2\pmod3$.
\end{dcases*}
\]
Let~$\pi$ be a connected partition of~$T$ of type~$\lambda$.
In the block factorization of~$w(T)$ induced by~$\pi$,
every block has size~$3$ except possibly one of size~$2$ or~$4$.

\begin{lemma}
\label{lem:adj.block3}
At least one of the side factors~$\beta$ and~$\gamma$
is a concatenation of blocks of size~$3$.
\end{lemma}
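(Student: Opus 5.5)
The plan is to locate the block of $\pi$ that contains the center letter~$3$ of $w(T)=\beta3\overline{\gamma}$. Everything follows from two facts recorded just before the statement. First, every block of $\pi$ is a factor of $w(T)$, so no letter is ever split between blocks. Second, all blocks have size~$3$ except at most one block of size~$2$ or~$4$.

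Let $z$ be the block containing the letter~$3$ coming from the internal leaf star at~$v_\ast$. Then $\abs{z}\ge 3$, so $\abs{z}\in\{3,4\}$. I would split into these two cases.

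\emph{Case $\abs{z}=3$.} Here $z$ is exactly the letter~$3$. The remaining blocks therefore factor $\beta$ and $\overline{\gamma}$ separately, with no block straddling the center. At most one block overall has size different from~$3$, so that block lies within at most one of the two sides. The other side, namely $\beta$ or $\gamma$ (reversal does not matter), is then a concatenation of blocks of size~$3$.

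\emph{Case $\abs{z}=4$.} Then $z$ is the exceptional block, and all other blocks have size~$3$. Since $z$ is a factor containing the central~$3$ and has size~$4$, it is $3$ together with one adjacent letter~$1$. The last letter of $\beta$ is~$2$ because $b_1=1$. Hence $z$ cannot extend into $\beta$, for that would give size at least $2+3=5$. So $z=31$ is a prefix of $3\overline{\gamma}$, and the blocks covering $\beta$ are exactly the blocks contained in $\beta$. All of these have size~$3$, so $\beta$ is a concatenation of blocks of size~$3$.

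I do not expect a genuine obstacle. The only point needing care is the observation that blocks are unions of whole internal leaf stars, so letters cannot be split; this was established before the statement, since each leaf lies in the same block as its neighbor. Combined with the fact that $\beta$ ends in~$2$, this excludes a size-$4$ block straddling into $\beta$. Cases such as $c_1=1$, where $\overline{\gamma}$ begins with~$2$, or $t=0$ need no separate treatment: they only constrain which configurations occur and do not affect the conclusion.
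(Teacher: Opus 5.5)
Your proposal is correct and follows essentially the same argument as the paper: locate the block containing the central letter~$3$. If that block is just~$3$, at most one side carries the exceptional block. Otherwise it is the size-$4$ block~$31$, which cannot reach into~$\beta$ because~$\beta$ ends in~$2$, so every block covering~$\beta$ has size~$3$.
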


\begin{proof}
The final letter~$2$ on the $\mathbf b$-side
cannot lie in the block containing the letter~$3$,
since every block has size at most~$4$.
If that block is~$3$,
both side factors consist of whole blocks of~$\pi$.
Since~$\pi$ has at most one block of size other than~$3$,
at least one side consists entirely of blocks of size~$3$.
Otherwise the block containing the letter~$3$ is~$31$,
which is the unique block of size~$4$.
All blocks on the $\mathbf b$-side then have size~$3$.
\end{proof}

We next derive the boundary restrictions for terminal lengths $2$ and $4$.

\begin{lemma}
\label{lem:bs>=a+3}
If $a\in\{2,4\}$,
then $\mathbf b\ne \epsilon$ and $b_s\ge a+3$.
The same restrictions hold for $(d,\mathbf c)$.
\end{lemma}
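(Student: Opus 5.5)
The plan is to use \cref{prop:Wolfgang} through the CPET property. If the restriction fails, we exhibit a partition type~$\lambda$ for which the block containing the terminal leaf~$z_1$ on the $\mathbf b$-side cannot exist. As in the discussion before \cref{lem:adj.block3}, take every part of~$\lambda$ to be at least~$2$. Then every block of a type-$\lambda$ connected partition is a factor of~$w(T)$, and the block containing~$z_1$ is a \emph{prefix} of~$w(T)$. So it suffices to find a type~$\lambda\vdash N$ none of whose parts is a prefix sum of~$w(T)$.

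First, compute the set of prefix sums of $w(T)=1^a2\,1^{b_s-1}\cdots$. There are two cases.
\begin{enumerate}
\item If $\mathbf b=\epsilon$, then $w(T)=1^a3\cdots$. The prefix sums are $1,\dots,a$, then $a+3$, and so on. Hence $a+1$ and $a+2$ are never prefix sums.
\item If $\mathbf b\ne\epsilon$, the prefix sums are $1,\dots,a$, then $a+2,\dots,a+b_s+1$, then $a+b_s+3$ or $a+b_s+4$, according as the next letter is~$2$ or~$3$. Hence $a+1$ and $a+b_s+2$ are never prefix sums.
\end{enumerate}
Suppose the restriction fails while $\mathbf b\ne\epsilon$. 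Since $b_s$ is odd, this means $b_s\le a+1$. The forbidden pair $\{p,q\}$ is then one of
\[
\{3,5\},\ \{3,7\}\ (a=2),\qquad \{5,7\},\ \{5,9\},\ \{5,11\}\ (a=4).
\]
If $\mathbf b=\epsilon$, the forbidden pair is $\{3,4\}$ or $\{5,6\}$. In every case $\gcd(p,q)=1$. So whenever $N$ exceeds the Frobenius number $pq-p-q$, we can write $N=xp+yq$ with $x,y\ge0$. Take $\lambda=p^xq^y$. A connected partition of type~$\lambda$ would need a prefix block of size $p$ or~$q$, which is impossible. This contradicts \cref{prop:Wolfgang}, or equivalently the CPET hypothesis.

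The main obstacle is the finitely many small orders~$N$ below these Frobenius numbers. The largest is $39$, for the pair $\{5,11\}$. I would handle these as follows. First, use the lower bound on $N$ from \cref{N.abcd}: we have $d\ge2$, each gap contributes at least~$2$, the center contributes~$3$, and $N$ is odd. This already eliminates many residual cases, for instance all cases with $\{p,q\}=\{3,4\}$ or $\{3,5\}$. For the remaining finitely many cases, I would either choose a different type adapted to both ends simultaneously, or use parts forbidden at the $\mathbf b$-end together with a single exceptional part that is also excluded at the $\mathbf c$-end. Failing that, I would check those small caterpillars directly with a computer.

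The statement for $(d,\mathbf c)$ follows by the same argument, reading $w(T)$ from the other end, since the reversal $\overline{w(T)}$ begins with $1^d2\,1^{c_t-1}\cdots$.
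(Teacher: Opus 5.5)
Your strategy is the paper's: choose a type all of whose parts are sizes attained by no prefix of $w(T)$ (what the paper calls $\mathbf b$-obstructions), so that the block containing $z_1$ cannot exist. Your obstruction pairs $\{a+1,a+2\}$ and $\{a+1,a+b_s+2\}$ are correct, and the Frobenius argument does dispose of all large $N$.

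The gap is the finite residue below the Frobenius numbers. You identify it but do not close it, and the pair $\{p,q\}$ alone cannot close it. For example, $T(4;(1,5)\,\vert\,\epsilon;24)$ must be ruled out, since its $\mathbf b$-side has $a=4$ and $b_s=5$, its center is adjacent to $u_1$, and $d=24$ is unrestricted. Yet $N=39$ is exactly the Frobenius number of $\{5,11\}$. Likewise $T(4;\epsilon\,\vert\,(1);10)$ has $N=19$, which is not a sum of $5$'s and $6$'s. Even for $\{3,7\}$, your stated lower bound does not exclude $N=11$, which is the Frobenius number; the paper uses the trivalent neighbor of $v_\ast$ to force a further gap and obtain $N\ge 13$. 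A computer search over the finitely many caterpillars with $N\le 39$ would be a legitimate fallback. It has not been carried out, however, so the proof stops exactly where the paper does its real work.

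The missing idea is a further supply of $\mathbf b$-obstructions near the center. Let $L$ and $R$ be the side weights, so $N=L+R+3$.
\begin{enumerate}
\item The letter $3$ makes $L+1$ and $L+2$ non-prefix sums.
\item The trivalent neighbor of $v_\ast$ makes $L-1$ a non-prefix sum if it lies on the $\mathbf b$-side, and $L+4$ if it lies on the $\mathbf c$-side.
\end{enumerate}
For $a=4$ and $\mathbf b=\epsilon$, this gives the extra obstruction $8=L+4$, so $13=5+8$ and $19=5+6+8$.

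For $a=4$ and $\mathbf b\ne\epsilon$, the paper uses one such part together with parts from $\{5,u\}$, where $u=b_s+6$. This reduces to showing that $\{R+1,R+2,R+4\}$, or $\{R-1,R+1,R+2\}$ in the second neighbor case, meets $5\mathbb Z_{\ge0}+u\mathbb Z_{\ge0}$. The paper checks this for $R\in\{4,6,8,10,12\}$ and propagates it by adding $10=5+5$. The value $R=2$ is excluded by proving the case $a=2$ first and applying it to the other side, so the order of the argument matters. In the example above, $L=12$ and the type $14\,5^5$ works.

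Your second suggestion does not work as phrased. An extra part that is merely excluded at the $\mathbf c$-end can still be the size of the block containing $z_1$. The extra part must itself be a $\mathbf b$-obstruction, or equivalently, $N$ minus it must be a $\mathbf c$-obstruction.
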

\begin{proof}
For an integer $m\in\{1,\dots,N-1\}$,
we call~$m$ a \emph{$\mathbf b$-obstruction}
if no prefix of~$w(T)$ has size~$m$,
and a \emph{$\mathbf c$-obstruction}
if no prefix of~$\overline{w(T)}$ has size~$m$.
For each excluded configuration,
we exhibit a partition type for which~$T$ has no connected partition of that type.
Suppose $N=f_1+\cdots+f_m$,
where each~$f_i\ge 2$ is a $\mathbf b$-obstruction.
Then~$T$ has no connected partition of type~$f_1\dotsm f_m$.
Indeed,
the block containing~$z_1$
would be represented by a proper prefix of~$w(T)$,
but its size would be one of the $\mathbf b$-obstructions~$f_i$.
An internal leaf star of order~$q\ge 2$
following internal leaf stars of total order~$p$
gives $\mathbf b$-obstructions $p+1$,
$\dots$,
and $p+q-1$.
We treat the two parts in order.

\noindent (i).
Suppose that $a=2$.
If $\mathbf b=\epsilon$,
the center gives $\mathbf b$-obstructions~$3$ and~$4$.
By \cref{N.abcd},
we have $N\ge 9$.
The identities
\[
6=3+3,\qquad 7=3+4,\qquad 8=4+4,
\]
and increments of~$3$ cover all possible orders~$N$.

Now suppose that $\mathbf b\ne \epsilon$.
The first two internal leaf stars of order at least two
give $\mathbf b$-obstructions~$3$ and~$b_s+4$.
If $b_s=1$,
then $N\ge 9$.
The identities
\[
8=3+5,\qquad 9=3+3+3,\qquad 10=5+5,
\]
and increments of~$3$ again cover all such orders.
If $b_s=3$,
the trivalent neighbor of $v_\ast$ forces at least one further gap
on one of the two sides.
Hence $N\ge 13$ by \cref{N.abcd}.
The identities
\[
12=3+3+3+3,\qquad 13=3+3+7,\qquad 14=7+7,
\]
and increments of~$3$ cover every such order.
Since~$b_s$ is odd,
this completes part~(i).

\noindent (ii).
Suppose that $a=4$.
If $\mathbf b=\epsilon$,
the center and its trivalent neighbor give $\mathbf b$-obstructions~$5$,
$6$,
and~$8$.
Here $N\ge 11$.
The identities
\[
10=5+5,\qquad 11=5+6,\qquad 12=6+6,\qquad 13=5+8,\qquad 14=6+8,
\]
and increments of~$5$ cover all such orders.

Now suppose that $\mathbf b\ne \epsilon$ and $b_s\in\{1,3,5\}$.
The first two internal leaf stars of order at least two
give $\mathbf b$-obstructions~$5$ and $u=b_s+6\in\{7,9,11\}$.
Write
\[
S_u=\{5i+u j\colon i,j\in\mathbb Z_{\ge 0}\}.
\]
Let~$L$ and~$R$ be the weights of the $\mathbf b$-side and the $\mathbf c$-side,
respectively.
By \cref{prop:T.abcd},
both weights are even and satisfy $N=L+R+3$.
If $R=2$,
then $(d,\mathbf c)=(2,\epsilon)$,
which is excluded in (i) using $\mathbf c$-obstructions.
Hence $R\ge 4$.
The center gives $\mathbf b$-obstructions~$L+1$ and~$L+2$.
Moreover,
if a trivalent neighbor of the center lies on the $\mathbf b$-side,
then $L-1$ is a $\mathbf b$-obstruction;
if it lies on the $\mathbf c$-side,
then $L+4$ is a $\mathbf b$-obstruction.

If the chosen neighbor lies on the $\mathbf b$-side,
it suffices to show that
\[
F_1\cap S_u\ne \emptyset,
\qquad\text{where $F_1=\{R+1,\,R+2,\,R+4\}$},
\]
since
adding the corresponding $\mathbf b$-obstruction from
$L+2$,
$L+1$,
and $L-1$,
respectively,
gives~$N$.
For a neighbor on the $\mathbf c$-side,
use $F_2=\{R-1,\,R+1,\,R+2\}$,
together with $L+4$,
$L+2$,
and $L+1$,
respectively.
\Cref{tab:boundary.adj} supplies a suitable element
for $R\in\{4,6,8,10,12\}$.
\begin{table}[H]
\centering
\caption{Sums of $5$ and $u$ completing a $\mathbf b$-obstruction to~$N$.}
\label{tab:boundary.adj}
\begin{tabular}{@{}l*{6}{>{$}c<{$}}@{}}
\toprule
Neighbor on & u & R=4 & R=6 & R=8 & R=10 & R=12
\\
\midrule
$\mathbf b$-side & 7,9 & 5 & 10 & 10 & 14 & 14
\\
$\mathbf b$-side & 11 & 5 & 10 & 10 & 11 & 16
\\
$\mathbf c$-side & 7 & 5 & 5 & 10 & 12 & 14
\\
$\mathbf c$-side & 9 & 5 & 5 & 10 & 9 & 14
\\
$\mathbf c$-side & 11 & 5 & 5 & 10 & 11 & 11
\\
\bottomrule
\end{tabular}
\end{table}
Here each entry in the last~$5$ columns is a sum of parts in $\{5,u\}$;
it is in~$F_1$ for the first~$2$ rows,
and in~$F_2$ for the last~$3$ rows.
For instance,
the entry $14=7+7=5+9$.
Every even $R\ge 4$ is one of the listed values
plus a nonnegative multiple of $10=5+5$.
Adding that multiple to the corresponding entry
preserves both membership in~$S_u$ and its required form.
The table and its extensions therefore exclude all three values of~$b_s$,
completing part~(ii).

Reflection gives the same conclusions for~$(d,\mathbf c)$
using $\mathbf c$-obstructions.
\end{proof}

We now apply the framework of \cref{sec:framework}
with $m=10$ and $\theta=\psi$,
where the nonzero values of~$\psi$ on the elementary generators are
\[
\psi(e_2)=1,\quad
\psi(e_3)=\frac{251}{5000},\quad
\psi(e_4)=\frac{203}{400},\quad
\psi(e_6)=\frac{163}{1000},\quad
\psi(e_8)=\frac{73}{2000},\quad
\psi(e_{10})=\frac{49}{10000}.
\]
Define a \emph{monoid homomorphism}~$h$ from the free monoid on~$\{1,2\}$
to the multiplicative monoid of rational $10\times10$ matrices by
$h(1)=A$ and $h(2)=M_1$.
By \cref{def:A},
\begingroup
\renewcommand{\arraystretch}{1.8}
\setlength{\arraycolsep}{8pt}
\[
A=\brk*{
\begin{array}{c|ccccccccc}
\mathbf 0&\multicolumn{9}{c}{I_9}
\\
\hline
\rule{0pt}{2em}\dfrac{441}{10000}&0&\dfrac{511}{2000}&0&\dfrac{163}{200}&0&
\dfrac{609}{400}&\dfrac{251}{2500}&1&0
\end{array}
}.
\]
\endgroup
For any word~$z$ over~$\{1,2\}$,
define its \emph{evaluation row} by
\[
r(z)=\mathbf e_1^\top A^{-1}h(z).
\]
Then for any words~$z$ and~$z'$ over~$\{1,2\}$,
the homomorphism property of~$h$ gives
\begin{equation}
\label{r=rh}
r(zz')
=\mathbf e_1^\top A^{-1}h(z)h(z')
=r(z)h(z').
\end{equation}

Let
\begin{align*}
\delta&=r(1^6)=\mathbf e_1^\top A^{-1}A^6
=\mathbf e_6^\top=(0,0,0,0,0,1,0,0,0,0),
\\
\varepsilon&=r(1^2 2 1^4)
=\mathbf e_1^\top A^{-1}A^2 (A^2-\ell \mathbf e_1^\top) A^4
=\mathbf e_8^\top-L_3\mathbf e_5^\top
=(0,0,0,0,-753/5000,0,0,1,0,0),
\quad\text{and}\\
\zeta&=r(1^4 2 1^3)
=\mathbf e_1^\top A^{-1} A^4 (A^2-\ell \mathbf e_1^\top) A^3
=\mathbf e_9^\top-L_5\mathbf e_4^\top
=(0,0,0,-1757/5000,0,0,0,0,1,0).
\end{align*}
Let $J$ be the $30\times 10$ matrix
\[
J=
\begin{pmatrix}
\delta
\\
HA^4
\\
HA^2M_1
\\
\zeta A^3
\end{pmatrix}, 
\]
where~$H$ is the $14\times10$ matrix
\begingroup
\setlength{\arraycolsep}{1pt}
\[
\begin{pmatrix}
0&0&0&0&0&1&0&0&0&0
\\
804715&21168&-145196&132893&-10000000&455358&1443927&1033951&1592797&985794
\\
-482912&21093&-159004&132467&-10000000&454090&1445511&1031974&1598166&982388
\\
-61577&4099&108538&25835&111221&89618&-1000000&218147&417586&193215
\\
-342703&9622&-10000000&60398&173930&206906&675699&473513&775654&449285
\\
-53391&3264&-33054&18396&-33823&57097&-15830&100000&7841&50609
\\
-10000000&5176&49102&32519&162939&111190&339117&249997&348319&239714
\\
-2848&-17&-554&-107&-702&-365&-528&10000&192&-763
\\
-640912&14083&-232094&105916&294730&396990&-234569&645095&-97570&1000000
\\
1000000&2102&732175&13374&426832&45822&13907&78342&-214733&87389
\\
-1000000&2353&681216&14939&385935&51779&48315&92287&-165771&99596
\\
-13489&386&11932&2425&100000&8404&43637&12984&-50731&15917
\\
-3528&-52&-3486&-327&-2130&-1413&-804&-2435&197&10000
\\
254158&148934&1378435&935758&4663152&3203782&9732753&7201248&10000000&6901416
\end{pmatrix}.
\]
\endgroup

\begin{lemma}
\label{lem:coneH.adj}
We have
$\delta,\varepsilon,\zeta A\in\cone{H}$,
$\cone{H} A^2\subseteq\cone{H}$,
$\cone{H} M_1\subseteq\cone{H}$,
and $HW_\psi J^\top<0$.
\end{lemma}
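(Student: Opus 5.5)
The lemma is a finite certificate, and I will prove it by exact rational arithmetic in the same style as \cref{lem:cone.nonadj}. First I assemble the data. The matrix $A$ is displayed above. I compute $L_1,\dots,L_{11}$ from \cref{lem:rec.L} with $\theta=\psi$, which gives $\ell$ and then $M_1=A^2-\ell\mathbf e_1^\top$. Next I compute $W_\psi$. For this I use the identity $(W_\psi)_{ij}=r_{i,\epsilon}W_\psi r_{j,\epsilon}^\top=\psi(X_{S(i,j,1,1)})$ for $1\le i,j\le 10$. These values can be obtained either from the $e$-expansions of the spiders $S(i,j,1,1)$ or, more conveniently, from the representation $W_\psi=SKD^3KS^\top$ in the proof of \cref{lem:transfer}. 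That product is symmetric in $x_1,\dots,x_{10}$, so its entries are polynomials in the $\psi(e_j)$ and can be evaluated over $\mathbb Q$ without finding the roots. Alternatively, the chromatic symmetric functions of these small spiders can be computed directly and specialized.

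\textbf{Membership claims.} For $\delta$, $\varepsilon$, and $\zeta A$, I exhibit explicit nonnegative rows $\lambda$ with $\lambda H$ equal to the target. The first row of $H$ is $\mathbf e_6^\top=\delta$, so $\delta\in\cone{H}$ is immediate. For $\varepsilon$ and $\zeta A=\mathbf e_{10}^\top-L_5\mathbf e_5^\top$, I solve the linear feasibility problem $\lambda H=v$, $\lambda\ge0$. I find a solution supported on at most ten rows of $H$ by choosing a basis of rows whose inverse gives a nonnegative solution, as in an LP vertex, and then record the rational coefficients.

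\textbf{Invariance claims.} For $\cone{H}A^2\subseteq\cone{H}$ and $\cone{H}M_1\subseteq\cone{H}$, I need nonnegative $14\times14$ matrices $G_1,G_2$ with $HA^2=G_1H$ and $HM_1=G_2H$. Since $H$ has rank $10$ and $14$ rows, these factorizations are not unique. I find each row of $G_i$ by solving a separate LP: the row $h_kA^2$, respectively $h_kM_1$, must be a nonnegative combination of the rows of $H$. I then round the LP solution to rationals and re-verify the identity exactly. Because the factorization is underdetermined, exactness has to be restored by re-solving on the chosen support with rational arithmetic rather than by rounding alone.

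\textbf{Sign claim.} Finally, $HW_\psi J^\top<0$ is a single $14\times30$ rational matrix product. I form $J$ from $\delta$, $HA^4$, $HA^2M_1$, and $\zeta A^3$, compute the product, and check every entry. Strictness is essential here, because later arguments apply it to nonzero nonnegative combinations.

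The main obstacle is the invariance step. The generators of $H$ have large entries, and the cone must be nearly tight, so the nonnegative factorizations may have many zero or tiny coefficients. An LP solution in floating point may then fail exact verification. If a row cannot be written exactly with nonnegative coefficients on some support, I will try other vertex supports until the exact rational solve succeeds. The sign check and the three membership checks are routine once $W_\psi$ is computed correctly, and computing $W_\psi$ is the other error-prone point. I would cross-check $W_\psi$ by comparing $r_{a,\epsilon}W_\psi r_{d,\epsilon}^\top$ with an independent evaluation of $\psi(X_{S(a,d,1,1)})$ for some $a,d>10$.
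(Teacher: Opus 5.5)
Your proposal is correct and follows essentially the same route as the paper. There, $\delta$ is read off as the first row of $H$; the remaining $1+1+14+14=30$ row identities $\varepsilon=uH$, $\zeta A=vH$, $HA^2=UH$, $HM_1=VH$ are each certified by an exact rational solve on ten linearly independent rows of $H$ with nonnegative coefficients; and the $14\times 30$ matrix $HW_\psi J^\top$ is then checked entrywise, all of this done by the verifier script (which also builds $W_\psi$ from its definition), just as you plan via LP-guided support selection followed by exact re-solving.
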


\begin{proof}
Since $\delta=\mathbf e_6^\top$ is the first row of~$H$,
we have $\delta\in\cone{H}$.
The self-contained script \texttt{verify.py} constructs
nonnegative rational rows~$u$ and~$v$
and nonnegative rational matrices~$U$ and~$V$ satisfying
\[
\varepsilon=uH,
\quad
\zeta A=vH,
\quad
HA^2=UH,
\quad\text{and}\quad
HM_1=VH.
\]
For each of these~$1+1+14+14=30$ row identities,
the script specifies ten rows of~$H$,
checks their independence,
and solves for the coefficients using exact rational elimination.
It verifies both the identity and the nonnegativity of every coefficient.
The first two identities give $\varepsilon,\zeta A\in\cone{H}$,
and the last two give the invariance of~$\cone{H}$ by linearity.
Finally,
the script constructs~$W_\psi$ and~$J$ from their definitions
and verifies that all $14\cdot30=420$ entries of~$HW_\psi J^\top$ are negative.
\end{proof}

We now show that both side evaluation rows belong to~$\cone{H}$,
and that a side consisting of blocks of size~$3$ gives a row in~$\cone{J}$.

\begin{lemma}
\label{lem:cone.side.adj}
Both $r(\beta)$ and~$r(\gamma)$ belong to~$\cone{H}$.
\end{lemma}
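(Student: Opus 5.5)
We prove the statement by reading each side factor as a word over $\{1,2\}$ and factoring it into pieces whose images under~$h$ are controlled by \cref{lem:coneH.adj}. By \cref{r=rh}, if a word factors as $z=z_0z_1\cdots z_k$ with $r(z_0)\in\cone{H}$ and each $h(z_i)$ a product of copies of $A^2$ and~$M_1$, then invariance of~$\cone{H}$ under $A^2$ and~$M_1$ gives $r(z)\in\cone{H}$. The plan is therefore to choose a suitable initial prefix~$z_0$ of each side factor. The rest of the word then splits into letters~$2$ and pairs~$11$, and the pairing works because all gap lengths are odd and the terminals are even.

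Take $\beta=1^a21^{b_s-1}\cdots21^{b_1-1}$. Each factor $21^{b_i-1}$ has $b_i-1$ even, so it equals $2(11)^{k_i}$. Hence everything after $1^a$ is a word in the blocks $2$ and $11$, and its image under~$h$ is a product of $M_1$ and~$A^2$. The prefix handling depends on the terminal length~$a$.

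\begin{itemize}
\item If $a\ge 6$, write $1^a=1^6(11)^{(a-6)/2}$. Since $r(1^6)=\delta\in\cone{H}$, we get $r(\beta)\in\cone{H}$.
\item If $a\in\{2,4\}$, \cref{lem:bs>=a+3} gives $\mathbf b\ne\epsilon$ and $b_s\ge a+3$. So $\beta$ begins with $1^a21^{a+2}$, and the exponent $b_s-1\ge a+2$ is even.
\begin{itemize}
\item For $a=2$, split off the prefix $1^221^4$, whose row is $\varepsilon\in\cone{H}$. The remaining $1^{b_s-5}$ has even length, so the rest again factors into $11$'s and $2$'s.
\item For $a=4$, split off the prefix $1^421^4$. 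Its row is $r(1^421^3)A=\zeta A\in\cone{H}$. The remainder $1^{b_s-5}$ has even length, and the argument finishes the same way.
\end{itemize}
\end{itemize}

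The reversed side factor~$\gamma$ is read from the $d$-terminal: $\gamma=1^d21^{c_t-1}\cdots21^{c_1-1}$. It has the same shape, so the identical argument applies, using the $(d,\mathbf c)$ half of \cref{lem:bs>=a+3}.

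The main point needing care is the bookkeeping of parities and the boundary. Every case produces even blocks of $1$'s because $a,d$ are even and the $b_i,c_j$ are odd. The boundary lemma guarantees exactly enough $1$'s after the first $2$ to match the prefixes defining $\varepsilon$ and~$\zeta$. The case $b_1=1$ contributes a trailing $2$ with no $1$'s, which is harmless. No further cone computation is needed beyond \cref{lem:coneH.adj}.
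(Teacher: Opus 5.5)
Your proposal is correct and follows essentially the same route as the paper. The paper likewise chooses the prefix $1^6$, $1^2 2 1^4$, or $1^4 2 1^4$ according to the terminal length, invoking \cref{lem:bs>=a+3} when the terminal length is $2$ or $4$. It then factors the remaining word into copies of $11$ and $2$, using the even run lengths of ones, and concludes by the invariance of $\cone{H}$ under $A^2$ and $M_1$.
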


\begin{proof}
Let $(\alpha,a_0)$ be either $(\beta,a)$ or $(\gamma,d)$.
By \cref{beta:gamma,lem:bs>=a+3},
we may choose a prefix~$z$ of~$\alpha$ as follows.
If $a_0=2$,
take $z=1^2 2 1^4$; then $r(z)=\varepsilon$.
If $a_0=4$,
take $z=1^4 2 1^4$;
then $r(z)=\zeta A$ by \cref{r=rh}.
If $a_0\ge 6$,
take $z=1^6$; then $r(z)=\delta$.
In each case,
$r(z)\in\cone{H}$ by \cref{lem:coneH.adj}.

Write $\alpha=z\sigma$.
Every run of ones in~$\alpha$ has even length
by \cref{prop:T.abcd}.
Since~$z$ ends after an even number of ones in its final run,
every run of ones in~$\sigma$ also has even length.
Pairing these ones within each run and keeping each letter~$2$ separately
expresses~$\sigma$ as a possibly empty concatenation of~$11$ and~$2$.
By \cref{r=rh},
appending these words multiplies the evaluation row by~$A^2$ and~$M_1$,
respectively.
The two inclusions in \cref{lem:coneH.adj}
therefore give $r(\alpha)\in\cone{H}$.
\end{proof}

\begin{lemma}
\label{lem:coneJ.adj}
If a side factor~$\alpha$ is a concatenation of blocks of size~$3$,
then $r(\alpha)\in\cone{J}$.
\end{lemma}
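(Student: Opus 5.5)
The plan is to peel off a short suffix of $\alpha$ and apply \cref{r=rh}. Write $\alpha=\alpha'y$, where $y$ is one of $1^4$ or $1^22$. Then
\[
r(\alpha)=r(\alpha')A^4\qquad\text{or}\qquad r(\alpha)=r(\alpha')A^2M_1 .
\]
If we can show $r(\alpha')\in\cone{H}$, then $r(\alpha)$ is a nonnegative combination of rows of $HA^4$ or of $HA^2M_1$, which are rows of~$J$. The membership $r(\alpha')\in\cone{H}$ will come from rerunning the argument of \cref{lem:cone.side.adj} on $\alpha'$. That needs $\alpha'$ to begin with the prescribed prefix $z$ (chosen according to the terminal length $a_0\in\{2,4\}$ or $a_0\ge6$, via \cref{lem:bs>=a+3}), and every run of ones of $\alpha'$ after $z$ to have even length. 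The finitely many configurations where this fails should be exactly those covered by the remaining rows $\delta$ and $\zeta A^3$ of~$J$.

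The first step is to determine the possible endings of $\alpha$ from the block hypothesis. The letters of $\alpha$ are $1$ and $2$, so the blocks of size~$3$ are $111$, $12$ and $21$. By \cref{prop:T.abcd} and \cref{beta:gamma}, every maximal run of ones in $\alpha$ has even length. I will test which suffixes of $\alpha$ can have size exactly~$3$.
\begin{itemize}
\item If $\alpha$ ends in $22$, the suffix sizes are $2,4,\dots$, so no suffix has size $3$. This is impossible.
\item If $\alpha$ ends in $21^2$, the suffix sizes are $1,2,4$. Again this is impossible.
\end{itemize}
Hence either $\alpha$ ends in $1^{2k}2$ with $k\ge1$ (the letter before a final $2$ must be $1$, and the run has even length), or $\alpha$ ends in a run $1^{2k}$ with $2k\ge4$. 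In the first case take $y=1^22$; in the second take $y=1^4$. In both cases $\alpha'$ still has all its runs of ones, other than possibly the initial terminal run, of even length. So the pairing into $11$'s and $2$'s from \cref{lem:cone.side.adj} applies to $\alpha'$ once $\alpha'$ contains the prefix~$z$.

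The second step, which I expect to be the main obstacle, is the boundary analysis: deciding when $\alpha'$ is too short to contain $z$, and checking that these cases give exactly $\delta$ or $\zeta A^3$. I would first list all such $\alpha$ and verify $\abs{\alpha}\equiv0\pmod3$ for each.
\begin{itemize}
\item $a_0\ge6$, so $z=1^6$. The only bad case is $\alpha=1^{a_0}$ with $a_0-4<6$. Divisibility by $3$ forces $a_0=6$, giving $r(\alpha)=\delta$.
\item $a_0=4$, so $z=1^421^4$. Here \cref{lem:bs>=a+3} gives $b_s\ge7$ (or $d$-side analogue), so the first run after the $2$ has length at least $6$. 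The bad case is $\alpha=1^421^6$, of size $11$. That size is not a multiple of $3$, so I will instead check which word actually realizes $\zeta A^3=r(1^421^6)$. I expect the intended case to be $\alpha'=1^421^2$, with $\alpha=1^421^21^4$, or a nearby word with the correct size, and I will match it carefully against the block condition.
\item $a_0=2$, so $z=1^221^4$. The bad cases ($\alpha'$ ending inside this prefix) I expect to be excluded by size modulo $3$ together with $b_s\ge5$.
\end{itemize}

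Finally, since $J$ contains $\delta$ and $\zeta A^3$ as rows, these boundary words land in $\cone{J}$ directly. All other cases land in $\cone{H}A^4\cup\cone{H}A^2M_1\subseteq\cone{J}$, which completes the proof. The same argument applies verbatim to $\gamma$, using $(d,\mathbf c)$ in \cref{lem:bs>=a+3}.
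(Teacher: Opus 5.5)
Your plan follows the paper's proof. You strip a final $1111$ or $112$, obtain $r(\alpha')\in\cone{H}$ from the prefix-and-pairing argument of \cref{lem:cone.side.adj}, and absorb the short words into the extra rows $\delta$ and $\zeta A^3$ of~$J$. Your derivation of the possible endings of $\alpha$ is correct.

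The defect is an arithmetic slip in the case $a_0=4$. The size of a word is the sum of its letters, so $\abs{1^4 2 1^6}=4+2+6=12$, not~$11$. This word is a legitimate concatenation $111\mid 12\mid 111\mid 111$ of blocks of size~$3$. Its evaluation row is $r(1^4 2 1^3)A^3=\zeta A^3$, which is a row of~$J$. Your proposed replacement $1^4 2 1^2 1^4$ is literally the same word. As written, then, your argument leaves this case open, and it suggests the row $\zeta A^3$ has no role. If you concluded from the miscount that the case is vacuous, you would be omitting a case that really occurs.

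With the count corrected, the case closes. Since $b_s\ge 7$ (or $c_t\ge 7$) by \cref{lem:bs>=a+3}, the word $1^4 2 1^6$ is the only word with $a_0=4$ for which $\alpha'$ fails to contain $1^4 2 1^4$. For every other such word, $r(\alpha')\in\cone{H}$ via $\zeta A\in\cone{H}$.

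Two smaller points.

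\emph{The case $a_0=2$.} No boundary analysis is needed here. \cref{lem:bs>=a+3} makes $\alpha$ begin with $112$, whose prefixes have sizes $1,2,4$, so the first block cannot have size~$3$. This is the same test you applied to suffixes, and it is how the paper disposes of $a_0=2$. Your route also works once checked: the words with $a_0=2$ whose $\alpha'$ misses $1^2 2 1^4$ are $1^2 2 1^4$, $1^2 2 1^6$, and $1^2 2 1^4 2$, of sizes $8$, $10$, and~$10$.

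\emph{The case $a_0\ge 6$.} Besides $1^{a_0}$, you must also dispose of $\alpha=1^{a_0}2$. There $\alpha'=1^{a_0-2}$ misses $1^6$ when $a_0=6$, but that word has size~$8$ and is excluded. Using that $\abs{\alpha}$ is even, hence a multiple of~$6$, makes all these size exclusions immediate.
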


\begin{proof}
Let~$a_0$ be the terminal length of~$\alpha$.
Since~$\abs{\alpha}$ is even and every block has size~$3$,
the size~$\abs{\alpha}$ is a multiple of~$6$.
If $a_0=2$,
then~$\alpha$ begins with~$112$
by \cref{lem:bs>=a+3},
and its first block does not have size~$3$,
a contradiction.
Thus $a_0\ge 4$.
If $\alpha=1^6$ or $\alpha=1^4 2 1^6$,
then its evaluation row is~$\delta$ or~$\zeta A^3$,
respectively,
by \cref{r=rh},
and is a row of~$J$.
We may therefore exclude these two words.

By \cref{prop:T.abcd},
there exists a prefix~$\tau$ of even size $\abs{\alpha}-4$ such that
$\alpha=\tau1111$ or $\alpha=\tau112$.
If $a_0=4$,
then~$\alpha$ begins with~$1^4 2 1^6$
by \cref{lem:bs>=a+3}.
Since this word was excluded and~$\abs{\alpha}$ is a multiple of~$6$,
we have $\abs{\alpha}\ge 18$.
Thus~$\tau$ contains the prefix~$1^4 2 1^4$,
whose evaluation row is~$\zeta A\in\cone{H}$
by \cref{r=rh}
and \cref{lem:coneH.adj}.
If $a_0\ge 6$,
then $\abs{\alpha}\ge 12$;
otherwise $\abs{\alpha}=6=a_0$ and $\alpha=1^6$,
which was excluded.
Thus~$\tau$ contains the prefix~$1^6$,
whose evaluation row is~$\delta\in\cone{H}$.
In either case,
every remaining run of ones in~$\tau$ has even length,
as in the proof of \cref{lem:cone.side.adj}.
Appending~$11$ and~$2$ therefore gives $r(\tau)\in\cone{H}$
by \cref{r=rh}
and the two invariance inclusions in \cref{lem:coneH.adj}.
Applying \cref{r=rh} to the final suffix gives
$r(\alpha)\in\cone{HA^4}$
or
$r(\alpha)\in\cone{HA^2M_1}$.
Both cones are contained in~$\cone{J}$ by the definition of~$J$,
completing the proof.
\end{proof}

We can now prove \cref{prop:adj}.

\begin{proof}
By \cref{prop:T.abcd} and \cref{prop:Wolfgang},
it suffices to show that every CPET caterpillar
$T=T(a;\mathbf b\mid \mathbf c;d)$
whose center has a trivalent neighbor is not $e$-positive.
By symmetry,
we may assume $b_1=1$.
Since~$h$ is a homomorphism,
$h(2 1^{j-1})=M_1 A^{j-1}=M_j$ for $j\ge 1$.
By \cref{beta:gamma}
and \cref{r=rh},
\[
r(\beta)
=\mathbf e_1^\top A^{a-1}M_{\mathbf b}
=r_{a,\mathbf b},
\]
and similarly
$r(\gamma)=r_{d,\mathbf c}$.
By \cref{lem:cone.side.adj},
both $r(\beta)$ and~$r(\gamma)$ belong to~$\cone{H}$.
By \cref{lem:adj.block3,lem:coneJ.adj},
at least one of them belongs to~$\cone{J}$.
Both rows are nonzero by \cref{lem:inv}.
By the symmetry of~$W_\psi$,
we may interchange~$\beta$ and~$\gamma$ if necessary
and write $r(\beta)=\lambda H$ and $r(\gamma)=\mu J$
for nonzero nonnegative coefficient rows~$\lambda$ and~$\mu$.
Thus \cref{lem:transfer,lem:coneH.adj} give
\[
\psi(X_T)
=r(\beta)W_\psi r(\gamma)^\top
=\lambda(HW_\psi J^\top)\mu^\top<0.
\]
Since $\psi(e_j)\ge 0$ for every $j\ge 1$,
$T$ is not $e$-positive.
\end{proof}

Combining \cref{prop:nonadj,prop:adj}
proves \cref{thm:e-.caterpillar}.

\section{Two infinite CPET families}
\label{sec:CPET}

We give two infinite families of non-$e$-positive CPET caterpillars
in the nonadjacent and adjacent cases,
respectively.
These families show that \cref{prop:Wolfgang} alone cannot establish
\cref{thm:e-.caterpillar} in either case.
For the adjacent family,
we determine exactly which parameter values give CPET caterpillars.
Put
$G_a=T(a;(1)\,\vert\,\epsilon;a-2)$
and $N=2a+3$.

\begin{lemma}
\label{lem:CPET.mu.Ga}
Let~$a\ge 4$ be even.
Let $\mu=\mu_1\dotsm\mu_\ell\vdash N$
with $\mu_i\ge 2$ for $1\le i\le \ell$.
Then~$G_a$ has a connected partition of type~$\mu$
if and only if at least one of the following conditions holds:
\begin{enumerate}[label=(\roman*)]
\item
\label[itm]{itm:adjacent-cpet-separated}
There is a set~$I\subseteq[\ell]$ such that
$\sum_{i\in I}\mu_i\in\{a+1,a+2\}$.
\item
\label[itm]{itm:adjacent-cpet-together}
There are an index~$k\in[\ell]$ and disjoint sets~$I$ and~$J$
such that
\[
I\sqcup J=[\ell]\setminus\{k\},
\qquad
\sum_{i\in I}\mu_i\le a,
\qquad\text{and}\qquad
\sum_{i\in J}\mu_i\le a-2.
\]
\end{enumerate}
\end{lemma}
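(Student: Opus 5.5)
We first fix the word of $G_a$. Here the $\mathbf b$-side has terminal $a$ and one gap $b_1=1$, so $\beta=1^a2$. The $\mathbf c$-side has no gaps and terminal $a-2$. Hence
\[
w(G_a)=1^a\,2\,3\,1^{a-2},
\]
of total size $a+2+3+a-2=2a+3=N$. Since every part satisfies $\mu_i\ge 2$, each leaf lies in the block of its neighbor. As in \cref{sec:adj}, a connected partition of type~$\mu$ therefore corresponds to a factorization of $w(G_a)$ into consecutive factors (blocks) whose sizes are the parts of~$\mu$ in some order. The plan is to classify such factorizations by how the two stars of orders~$2$ and~$3$ (the trivalent vertex $u_1$ and the center $v_\ast$) are distributed among blocks.

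For the ``only if'' direction, take a factorization and split into two cases. In the first case the letters $2$ and $3$ lie in different blocks. Then some block boundary lies between them, so the union of the blocks to the left of that boundary is a prefix of size exactly $a+2$. Cutting elsewhere is impossible, since any boundary must fall between letters and the only positions available around these stars give prefixes of size $a+1$ or $a+2$. More carefully, the boundary between $2$ and $3$ gives size $a+2$, and we record that (i) holds with $I$ the set of indices of those left blocks. In the second case $2$ and $3$ lie in a common block, say of index~$k$. The blocks strictly left of it cover part of the prefix $1^a$, and the blocks strictly right of it cover part of the suffix $1^{a-2}$. This gives $I,J$ with $\sum_I\le a$ and $\sum_J\le a-2$, which is exactly (ii).

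For the ``if'' direction, we build factorizations from the two conditions. Given (ii), place the parts indexed by $I$ as blocks of ones at the left end and those indexed by $J$ as blocks of ones at the right end. Part $k$ then fills the middle, which contains $23$ together with the leftover ones on each side. This works because all those ones are consecutive and the sizes add up to $N$. Given (i) with sum $a+2$, put the parts in $I$ on a left prefix. They tile $1^a2$ greedily in any order, which is valid because the only constraint is that no cut falls inside the letter $2$; the last block simply absorbs the $2$. The complement then tiles $3\,1^{a-2}$ with the $3$ in the first block. Given (i) with sum $a+1$, use the complement instead: its sum is $a+2=|3\,1^{a-2}|+1$, so that approach fails directly. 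Instead, place the $I$-parts on the right, where they tile $3\,1^{a-2}$ from the right with sum $a+1$. The remaining parts have sum $a+2$ and tile $1^a2$, which is again fine.

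The main obstacle is the careful bookkeeping in the first case of the ``only if'' direction. There I must check that a block boundary between the two heavy stars forces a subset sum of exactly $a+2$, or by complement $a+1$, and that no other boundary positions create extra possibilities. I will also verify that greedy tiling never needs to cut inside a star, using that each star sits at an end of the region being tiled.
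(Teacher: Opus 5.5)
Your plan follows the paper's proof. The ``only if'' direction splits on whether $u_1$ and $v_\ast$ (your letters $2$ and $3$) share a block, and the construction from condition (ii) fills both arms from their terminal ends; both parts are correct. Your aside that boundaries near the stars give prefixes of size $a+1$ or $a+2$ is wrong, since a prefix of size $a+1$ would split the star at $u_1$. Your ``more carefully'' sentence already corrects it: the only boundary between the adjacent letters $2$ and $3$ gives the prefix $1^a2$ of size $a+2$.

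The gap is in the construction from condition (i). Tiling $1^a2$ in any order is valid, because the last block has size at least $2$ and absorbs the letter $2$. The same reasoning fails for $3\,1^{a-2}$. The block containing the letter $3$ must have size at least $3$, so placing a part equal to $2$ at that end cuts through the center star. Hence ``each star sits at an end of the region being tiled'' does not justify ``the complement then tiles $3\,1^{a-2}$ with the $3$ in the first block.''

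The missing step, which the paper states explicitly, is a parity argument. The parts assigned to this side sum to $a+1$, which is odd because $a$ is even. So they cannot all equal $2$: some part is odd, hence at least $3$, and you must use that part for the block containing the letter $3$. The same remark is needed in your sum-$(a+1)$ case, where tiling $3\,1^{a-2}$ from the right requires the leftmost block to be such a part.

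That case is just the sum-$(a+2)$ case applied to $[\ell]\setminus I$, exactly as the paper's ``without loss of generality'' does. Your sentence saying ``that approach fails directly'' is confused, since what you then do is precisely this complement reduction.
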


\begin{proof}
Let~$u$ be the trivalent vertex of~$G_a$.
It is adjacent to the center~$v_\ast$.
As before,
since $\mu_i\ge 2$,
every internal leaf star is contained in a block
of every connected partition of type~$\mu$.
If~$u$ and~$v_\ast$ lie in different blocks,
then each block lies in a component of $G_a-u v_\ast$,
whose orders are $a+2$ and~$a+1$.
The parts assigned to either component therefore satisfy (i).
Otherwise,
deleting the two internal leaf stars
leaves two paths~$P_a$ and~$P_{a-2}$.
Every other block lies in one of these paths,
which gives (ii).

Conversely,
suppose that (i) holds.
Suppose that $\sum_{i\in I}\mu_i=a+2$ without loss of generality.
Then
\[
G_a-uv_\ast\cong P_{a+2}\sqcup S(a-2,\,1,\,1).
\]
Partition the path consecutively using the parts indexed by~$I$.
Since the remaining parts sum to the odd number~$a+1$,
some part~$q$ is odd and at least~$3$.
Constructing this block to contain the leaf star at the center,
and then partitioning the remaining path consecutively gives the desired partition.

If (ii) holds,
remove consecutive partition blocks of the sizes indexed by~$I$ and~$J$
from the terminal ends of the arms of lengths~$a$ and~$a-2$,
respectively.
The two bounds ensure that the remainder is connected and has size~$\mu_k$,
completing the partition.
\end{proof}

The following proposition characterizes exactly when~$G_a$ is CPET.

\begin{proposition}
\label{prop:CPET.Ga}
Let~$a\ge 3$ be an integer.
Then the caterpillar~$G_a$ is CPET if and only if
\[
a\equiv 2\pmod4
\qquad\text{and}\qquad
\gcd(2a+3,\,15)=1.
\]
\end{proposition}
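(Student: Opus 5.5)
We would first dispose of the cases where \cref{lem:CPET.mu.Ga} does not apply. If $a$ is odd, then $G_a=T(a;(1)\,\vert\,\epsilon;a-2)$ has both terminal lengths odd. Since $G_a$ has a unique degree-$4$ vertex, its representation in the form of \cref{prop:T.abcd} is unique up to swapping the two sides. \Cref{prop:T.abcd} therefore shows $G_a$ is not CPET, and the displayed condition also fails because $a\not\equiv 2\pmod 4$. The same representation argument covers $a=3$. So we may assume $a\ge 4$ is even. We then prove the equivalence using \cref{lem:CPET.mu.Ga}, handling types with parts equal to~$1$ separately.

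\textbf{Necessity.} For each violated condition we exhibit a type $\mu$ with all parts at least~$2$ for which both \cref{itm:adjacent-cpet-separated} and \cref{itm:adjacent-cpet-together} fail.
\begin{itemize}
\item If $a=4k$, take $\mu=4^{2k}3$. Every subset sum is $\equiv 0$ or $3\pmod 4$, while $a+1$ and $a+2$ are $\equiv 1,2\pmod4$, so \cref{itm:adjacent-cpet-separated} fails. For \cref{itm:adjacent-cpet-together}, removing the part~$3$ leaves $8k$, but two groups of fours bounded by $4k$ and $4k-2$ give at most $8k-4$. Removing a part~$4$ leaves $8k-1$, but the best split has size $\max\{4k+(4k-5),(4k-1)+(4k-4)\}=8k-5$.
\item If $3\mid N$, then $a\equiv0\pmod 3$. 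Take $\mu=3^{N/3}$. Neither $a+1$ nor $a+2$ is a multiple of~$3$, and $\lfloor a/3\rfloor+\lfloor (a-2)/3\rfloor=(2a-3)/3<N/3-1$.
\item If $5\mid N$, then $a\equiv1\pmod5$. Take $\mu=5^{N/5}$. The analogous floor computation gives $(2a-7)/5<(2a-2)/5$.
\end{itemize}

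\textbf{Sufficiency.} Assume $a\equiv2\pmod4$ and $\gcd(N,15)=1$.

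First we reduce types containing parts equal to~$1$ to the case of parts at least~$2$. A singleton part can be taken as the leaf at the end of the $a$-arm or the $(a-2)$-arm, or as a pendant leaf, leaving a smaller tree of a similar shape. Alternatively, we can rerun the argument of \cref{lem:CPET.mu.Ga} allowing singleton blocks; leaves then need not share blocks with their neighbours, which only adds freedom.

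For $\mu$ with all parts at least~$2$, we verify \cref{itm:adjacent-cpet-separated} or \cref{itm:adjacent-cpet-together} by cases on the multiset of parts.
\begin{itemize}
\item If some part satisfies $\mu_k\ge 5$ or is reasonably large, we greedily pack the remaining parts into bins of capacities $a$ and $a-2$. \cref{itm:adjacent-cpet-together} then follows unless the packing is tight.
\item If there is a part $2$, subset sums of the remaining parts together with twos fill every value in an interval, which hits $a+1$ or $a+2$.
\item Otherwise all parts lie in $\{3,4\}$, or the parts are few and large; these require a closer argument.
\end{itemize}

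The main obstacle is this last regime of small parts, especially types built mostly from $3$'s, $4$'s and $5$'s. Here the conditions $a\equiv 2\pmod 4$, $3\nmid N$ and $5\nmid N$ must be exactly what rules out the counterexamples above. We would organise this step by the residue of $a+1$ and $a+2$ modulo the parts present, using that $a+2\equiv 0\pmod 4$. For example, any type with at least $(a+2)/4$ fours satisfies \cref{itm:adjacent-cpet-separated}. A type with few fours has enough $3$'s or $5$'s that a combination of them reaches $a+1$ or $a+2$, and the only way this can fail is when all parts equal~$3$ or all equal~$5$. That case is excluded by $\gcd(N,15)=1$, since then $3\nmid N$ or $5\nmid N$ respectively.
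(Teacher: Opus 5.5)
Your handling of odd $a$ and your necessity argument (types $4^{2k}3$, $3^{N/3}$, $5^{N/5}$) are correct and essentially coincide with the paper's. The gap is in sufficiency, which is not proved: the case split is neither exhaustive nor carried out, and the one concrete claim you make for the hard regime is false. Take $a=10$, so $N=23$ and both hypotheses hold, and $\mu=(5,5,5,4,4)$. It has two fours, fewer than $(a+2)/4=3$, and is neither all $3$'s nor all $5$'s. Yet its subset sums $5i+4j$ ($i\le 3$, $j\le 2$) are $0,4,5,8,9,10,13,14,15,18,19,23$, never $11$ or $12$. So condition~(i) fails, and this type is realized only through condition~(ii): the distinguished part is $5$, with groups $\{5,5\}$ and $\{4,4\}$ of sums $a$ and $a-2$. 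Likewise, a part $2$ does not make the subset sums fill an interval: for $a=10$ and $\mu=(7,7,7,2)$ they are $0,2,7,9,14,16,21,23$. Your bin-packing step also cannot set aside the ``tight'' case. When the distinguished part is $5$, the remaining parts sum to $2a-2=a+(a-2)$, so the packing is always tight; that is exactly the configuration above.

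The missing idea is a balancing argument that reduces everything to three boundary configurations. Set aside the largest part $\mu_1$ and place the remaining parts one at a time into two groups, always adding to the lighter group. The final sums $x\le y$ satisfy $x+y=N-\mu_1$ and $y-x\le\mu_1$, so $2y\le N=2a+3$ and hence $y\le a+1$. If $y=a+1$, condition~(i) holds. If $y\le a$ and $x\le a-2$, condition~(ii) holds with distinguished part $\mu_1$. The only leftover cases are $(x,y)\in\{(a,a),(a-1,a),(a-1,a-1)\}$, which force $\mu_1=3,4,5$ respectively, and this is precisely where the hypotheses are used:
\begin{itemize}
\item For $\mu_1=3$, the condition $3\nmid N$ supplies a part $2$, and moving it across gives sum $a+2$.
\item For $\mu_1=4$, the condition $a\equiv 2\pmod 4$ forces a part $2$ or $3$ in the group of sum $a$, and moving it gives $a+1$ or $a+2$.
\item For $\mu_1=5$, any part $2$ or $3$ can be moved in the same way. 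Otherwise $5\nmid N$ supplies a part $4$, and each group of odd sum $a-1$ contains a $5$. Swapping the $4$ with a $5$ from the other group yields sums $a$ and $a-2$, giving condition~(ii).
\end{itemize}
Your reduction for parts equal to $1$ also fails as stated. Deleting an arm leaf gives, e.g., $T(a-1;(1)\,\vert\,\epsilon;a-2)$, which lies outside the family, so there is no induction hypothesis to invoke. The simple fix is to merge all parts $1$ into another part and realize that type. Then peel the singletons off as leaves of the subtree induced by the merged block.
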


\begin{proof}
Suppose that~$G_a$ is CPET.
By \cref{prop:T.abcd},
$a$ is even.
If $4\mid a$,
consider the type $3\,4^{a/2}$.
Every sum in condition~\cref{itm:adjacent-cpet-separated}
of \cref{lem:CPET.mu.Ga}
is congruent to~$0$ or~$3$ modulo~$4$,
so that condition fails.
Since all parts are at most $4$,
condition~\cref{itm:adjacent-cpet-together} also fails
since $\mu_k\ge N-a-(a-2)=5$.
Hence $a\equiv 2\pmod4$.

If $3\mid N$,
then $3\mid a$.
The type~$3^{N/3}$ satisfies neither
condition~\cref{itm:adjacent-cpet-separated}
nor condition~\cref{itm:adjacent-cpet-together}:
neither $a+1$ nor $a+2$ is divisible by~$3$,
and every part is smaller than~$5$.
Hence $3\nmid N$.
If $5\mid N$,
then $a\equiv 1\pmod5$.
For the type~$5^{N/5}$,
condition~\cref{itm:adjacent-cpet-separated} fails
because neither $a+1$ nor $a+2$ is divisible by~$5$.
In condition~\cref{itm:adjacent-cpet-together},
the distinguished block must have size~$5$,
and both upper bounds are attained.
It follows that both~$a$ and $a-2$ are multiples of~$5$,
which is impossible.
Thus $5\nmid N$.
Hence $\gcd(N,15)=1$.

Conversely,
suppose that the two conditions hold.
Then $a\ge 6$ is even.
Let $\mu\vdash N$.
The type $\mu=1^N$ is realized by the singleton partition.
Otherwise,
merge all parts~$1$ into one part of size at least~$2$.
Once a connected partition of the resulting type is constructed,
recover the singleton blocks by successively removing leaves
from the tree induced by the merged block;
the remaining block stays connected.
It therefore suffices to consider partitions~$\mu$ with all parts at least~$2$.

Set aside the largest part~$\mu_1$.
Starting with two empty groups,
place each remaining part in the group with the smaller current sum,
breaking ties arbitrarily.
Since each part is at most~$\mu_1$,
the absolute difference between the two sums remains at most~$\mu_1$.
Denote the final sums by~$x\le y$.
Then $x+y=N-\mu_1$ and $y-x\le \mu_1$.
It follows that $2y\le N=2a+3$ and $y\le a+1$.
If $y=a+1$,
then condition~\cref{itm:adjacent-cpet-separated} of
\cref{lem:CPET.mu.Ga} applies.
Otherwise $y\le a$.
If $x\le a-2$,
then the groups of sums~$y$ and~$x$ together with the part~$\mu_1$
satisfy condition~\cref{itm:adjacent-cpet-together}.
It remains to consider the pairs $(x,y)$ such that
\[
a-1\le x\le y\le a,
\qquad\text{i.e.,}\qquad
(x,y)\in\{(a,a),\ (a-1,\,a),\ (a-1,\,a-1)\}.
\]

If $x=y=a$,
then $\mu_1=N-x-y=3$.
Since $3\nmid N$,
we find $\mu_\ell=2$.
Moving~$\mu_\ell$ to the other group gives a sum of~$a+2$,
so condition~\cref{itm:adjacent-cpet-separated} applies.
If $(x,y)=(a-1,\,a)$,
then $\mu_1=4$.
Since $a\equiv 2\pmod4$,
the group of sum~$a$ contains a part~$2$ or~$3$.
Moving this part to the other group gives a sum of~$a+1$ or~$a+2$,
and condition~\cref{itm:adjacent-cpet-separated} applies.
If $x=y=a-1$,
then $\mu_1=5$.
If either group contains a part~$2$ or~$3$,
then moving it to the other group gives a sum of~$a+1$ or~$a+2$,
and condition~\cref{itm:adjacent-cpet-separated} applies.
Since $5\nmid N$,
we find $\mu_\ell=4$.
Since $a-1$ is odd,
each group contains a part~$5$.
Exchanging~$\mu_\ell$ with a part~$5$ in the other group yields
two sums~$a$ and~$a-2$,
and condition~\cref{itm:adjacent-cpet-together} applies.
\end{proof}

We now construct the aforementioned CPET caterpillars.

\begin{corollary}
\label{cor:infinite.CPET}
The caterpillars $T(3p-3;\epsilon\,\vert\,(3p-4);2)$ for primes~$p\ge 7$
and $T(a;(1)\,\vert\,\epsilon;a-2)$ for integers~$a\ge 6$
with $a\equiv 2\pmod4$ and $\gcd(2a+3,15)=1$
form two infinite families of non-spider CPET caterpillars that are not $e$-positive.
\end{corollary}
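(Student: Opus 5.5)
The corollary has three parts for each family: non-$e$-positivity, CPET, and being non-spider with $\Delta=4$. Non-$e$-positivity is immediate from \cref{thm:e-.caterpillar}, since both families have a degree-$4$ center. Neither is a spider: the first has a trivalent vertex $v_1$ besides the center $v_\ast$, and $G_a$ has the trivalent neighbor $u_1$. Both families are infinite, by the infinitude of primes for the first. For the second, note that $a\equiv 2\pmod 4$ and $2a+3\not\equiv 0\pmod 3,5$ cut out a union of residue classes modulo $60$. One such class is $a\equiv 2\pmod{60}$, which gives $2a+3\equiv 7\pmod{60}$. So it remains only to prove CPET.

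For the second family, CPET is exactly the converse direction of \cref{prop:CPET.Ga}. For $a\ge 6$ with the stated congruences, that proposition gives that $G_a=T(a;(1)\,\vert\,\epsilon;a-2)$ is CPET.

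For the first family $T_p=T(3p-3;\epsilon\,\vert\,(3p-4);2)$, I would mimic \cref{lem:CPET.mu.Ga}. The order is $N=(3p-3)+(3p-3)+2+3=6p-1$. Reduce to types $\mu$ with all parts at least $2$ by the leaf-merging argument in the proof of \cref{prop:CPET.Ga}. Then every block is a union of consecutive internal leaf stars, so a type is realized iff the word $w(T)=1^{3p-3}\,3\,1^{3p-5}\,2\,1^2$ can be cut into consecutive factors of sizes some ordering of $\mu$.

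I would split by how the center and the trivalent vertex $v_1$ are grouped.
\begin{enumerate}
\item If $v_\ast$ and $v_1$ lie in a common block of size $q$, the condition is like \cref{itm:adjacent-cpet-together}: the remaining parts split into sums at most $3p-3$ and at most $2$, subject to the block size $q\ge 3p-2+3+\dots$, whose exact value must be read off the word.
\item If they lie in different blocks, the condition is that a subset of parts sums into a window determined by the positions of the letters $3$ and $2$. One needs a prefix sum avoiding the forbidden interiors $3p-2,3p-1$ and, near the right end, $6p-4$.
\end{enumerate}

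The characterization then amounts to finding, for each $\mu$, a subset-sum hitting one of a few residues modulo small numbers near $3p$. Use the greedy two-group balancing from the proof of \cref{prop:CPET.Ga}, followed by local exchanges of small parts $2,3,4,5$, handling the leftover balanced cases separately.

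The primality of $p$, together with $N=6p-1$, is what kills the uniform types. For a type $q^{N/q}$ with $q\mid 6p-1$, the obstruction would need $q$ to divide both $3p-3$ or $3p-2$ and $6p-1$. Since $\gcd(3p-3,6p-1)\mid 5$ and $\gcd(3p-2,6p-1)\mid 3$, only $q\in\{3,5\}$ could matter. Then $3\nmid 6p-1$, and $5\mid 6p-1$ means $p\equiv 1\pmod 5$, which must be handled by the together-case.

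\textbf{Main obstacle.} I expect the hardest step to be this CPET verification for the first family, specifically the exhaustive exchange argument in the balanced cases. The paper attributes this family to \citet{CGMPW06}, so first I would try to cite their construction directly: they exhibit arbitrarily vertex decomposable caterpillars with a degree-$4$ vertex, and I would match their parameters to $T(3p-3;\epsilon\,\vert\,(3p-4);2)$. I would fall back on the case analysis above only if the parameters do not match.
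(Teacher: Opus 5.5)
Your proposal matches the paper's proof: non-$e$-positivity from \cref{thm:e-.caterpillar}, CPET of $T(a;(1)\,\vert\,\epsilon;a-2)$ from the converse direction of \cref{prop:CPET.Ga}, CPET of the first family by citing \citet[Prop.~9 and Thm.~10]{CGMPW06}, and infinitude via a residue class modulo~$60$. The citation is all the paper uses, so your fallback is not needed; the paper takes $a=60q+10$, and your class $a\equiv 2\pmod{60}$ also works once $a\ge 62$.

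As a side remark on that unused fallback: your gcd check uses $3p-3$ instead of the forbidden prefix sums $3p-2$, $3p-1$, $6p-4$ that you had identified. In fact $\gcd(3p-1,6p-1)=1$, while $\gcd(3p-2,6p-1)$ and $\gcd(6p-4,6p-1)$ divide $3$ and $3\nmid 6p-1$. Hence no uniform type $q^{N/q}$ is ever obstructed, and the uniform types do not explain the primality hypothesis.
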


\begin{proof}
The two caterpillars are not $e$-positive by \cref{thm:e-.caterpillar}.
\citet[Prop.~9 and Thm.~10]{CGMPW06} proved that the first one is CPET,
and \cref{prop:CPET.Ga} shows that~$T(a;(1)\,\vert\,\epsilon;a-2)$ is CPET.
To see the infinitude of the second family,
the choice $a=60q+10$ satisfies both conditions for every integer $q\ge 0$.
\end{proof}

The caterpillars in \cref{cor:infinite.CPET} are shown in \cref{fig:Tp}.
\begin{figure}[H]
\centering
\begin{tikzpicture}[
x=1.0cm, y=0.84cm,
every node/.style={font=\small}]
\draw[graph edge] (0,0)--(1.7,0);
\draw[graph edge] (2.3,0)--(4,0)--(5.7,0);
\draw[graph edge] (6.3,0)--(8,0)--(9,0)--(10,0);
\draw[graph edge] (3.58,0.92)--(4,0)--(4.42,0.92);
\draw[graph edge] (8,0)--(8,0.92);
\foreach \p in {(0,0),(4,0),(8,0),(9,0),(10,0),(3.58,0.92),(4.42,0.92),(8,0.92)}
\node[graph vertex] at \p {};
\node[inner sep=1.2pt] at (2,0) {$\cdots$};
\node[inner sep=1.2pt] at (6,0) {$\cdots$};
\draw[segment brace]
(0.06,-0.62)--node[below=4pt] {$3p-3$} (3.94,-0.62);
\draw[segment brace]
(4.06,-0.62)--node[below=4pt] {$3p-4$} (7.94,-0.62);
\draw[segment brace]
(8.06,-0.62)--node[below=4pt] {$2$} (9.94,-0.62);
\begin{scope}[yshift=-2.3cm]
\draw[graph edge] (0,0)--(1.7,0);
\draw[graph edge] (2.3,0)--(4,0)--(5,0)--(7.2,0);
\draw[graph edge] (7.8,0)--(10,0);
\draw[graph edge] (4,0)--(4,0.92);
\draw[graph edge] (4.58,0.92)--(5,0)--(5.42,0.92);
\foreach \p in {(0,0),(4,0),(5,0),(10,0),(4,0.92),(4.58,0.92),(5.42,0.92)}
\node[graph vertex] at \p {};
\node[inner sep=1.2pt] at (2,0) {$\cdots$};
\node[inner sep=1.2pt] at (7.5,0) {$\cdots$};
\draw[segment brace]
(0.06,-0.62)--node[below=4pt] {$a$} (3.94,-0.62);
\draw[segment brace]
(4.06,-0.62)--node[below=4pt] {$1$} (4.94,-0.62);
\draw[segment brace]
(5.06,-0.62)--node[below=4pt] {$a-2$} (9.94,-0.62);
\end{scope}
\end{tikzpicture}
\caption{The two caterpillars in \cref{cor:infinite.CPET}.
The labels below the horizontal paths indicate the segment lengths.}
\label{fig:Tp}
\end{figure}

\section{Explicit examples and a criterion for caterpillars with $\Delta=3$}
\label{sec:Li}

Denote by
$C(\alpha)=C(\alpha_1,\dots,\alpha_d)$ the caterpillar
whose nonleaf vertices form the path~$v_1\dotsm v_d$,
with~$\alpha_i$ leaves adjacent to~$v_i$.
\citet[Thm.~3.5]{Li26X} established the following criterion for $e$-positivity of caterpillars.

\begin{theorem}[\citeauthor{Li26X}]
\label{thm:Li.caterpillar}
For any $e$-positive caterpillar $C(\alpha)=C(\alpha_1,\ldots,\alpha_d)$, 
\begin{multline*}
\sum_{i=1}^d\binom{\alpha_i}{2}
+\sum_{i=1}^d\alpha_i\sum_{j\ne i}\abs{j-i}
+\sum_{1\le i<j\le d}\brk1{\alpha_i\alpha_j(j-i+1)+j-i-1}
\\
\ge\sum_{k=\lceil\frac{\abs{\alpha}+d}{2}\rceil}^{\abs{\alpha}+d-2}
k(\abs{\alpha}+d-k)
+\frac{(\abs{\alpha}-d)(\abs{\alpha}+d-1)}{2}.
\end{multline*}
\end{theorem}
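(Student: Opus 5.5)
The plan is to follow the acyclic-orientation route suggested by the description of \citet{Li26X} in the introduction. Write $X_{C(\alpha)}=\sum_\lambda c_\lambda e_\lambda$ and $n=\abs{\alpha}+d$, the number of vertices. By Stanley's sink theorem \citep{Sta95}, $\sum_{\ell(\lambda)=2}c_\lambda$ equals the number $\mathrm{ao}_2$ of acyclic orientations of $C(\alpha)$ with exactly two sinks. The proof then has two halves: show that the left-hand side of \cref{thm:Li.caterpillar} equals $\mathrm{ao}_2$, and show that $e$-positivity forces $\mathrm{ao}_2$ to be at least the right-hand side.

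For the first half I would count two-sink acyclic orientations of a tree directly. Fix the two sinks $x\ne y$; they must be nonadjacent. On the unique $x$--$y$ path there is a unique internal vertex (the ``top'') from which the path is oriented down toward $x$ and toward $y$. Since $C(\alpha)$ is a tree, every edge off this path is then forced to point toward the path, because no further sink is allowed. So $\mathrm{ao}_2$ is the number of pairs $\{x,y\}$ weighted by the number of internal vertices of the $x$--$y$ path, which is $\operatorname{dist}(x,y)-1$. I would then split the pairs by type:
\begin{itemize}
\item two leaves at the same $v_i$, giving $\binom{\alpha_i}2$;
\item a leaf at $v_i$ and a spine vertex $v_j$ with $j\ne i$, giving the term $\alpha_i\abs{j-i}$ (the diagonal case $j=i$ is adjacent and contributes nothing);
\item leaves at $v_i$ and $v_j$ with $i<j$, giving $\alpha_i\alpha_j(j-i+1)$;
\item two spine vertices, giving $j-i-1$.
\end{itemize}
This reproduces the left-hand side exactly, so the first half is routine bookkeeping.

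For the second half I need lower bounds on the two-part coefficients $c_{(n-k,k)}$, with $\lceil n/2\rceil\le k\le n-1$, that hold under $e$-positivity. I would compare the $p$-expansion $X_T=\sum_{S\subseteq E}(-1)^{\abs S}p_{\lambda(S)}$ with the $e$-expansion, expanding $e_\lambda$ in power sums via $e_j=\sum_{\mu\vdash j}\varepsilon_\mu z_\mu^{-1}p_\mu$. For a tree on $n$ vertices:
\begin{itemize}
\item the coefficient of $p_{(n)}$ in $X_T$ is $(-1)^{n-1}$;
\item the coefficient of $p_{(n-k,k)}$ is $(-1)^{n-2}$ times the number of edges whose removal leaves components of sizes $k$ and $n-k$;
\item every $c_\lambda$ with $\ell(\lambda)\ge3$ feeds into these coefficients only with a definite sign, so $e$-positivity turns the resulting linear identities into inequalities.
\end{itemize}
Solving the triangular system should give, for each $k$ with $\lceil n/2\rceil\le k\le n-2$, a bound of the shape $c_{(k,n-k)}\ge k(n-k)$, after absorbing the edge-split counts. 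The coefficient $c_{(n-1,1)}$ should be handled separately and account for $\frac{(\abs\alpha-d)(n-1)}2$, using that a caterpillar has $\abs{\alpha}$ leaves when $d\ge2$, with the degree sequence entering through the $p_{(n-1,1)}$ coefficient. Summing these bounds over $k$ gives the right-hand side.

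The main obstacle is this second half: pinning down the exact linear relations and sign pattern that produce the specific quantities $k(n-k)$ and $\frac{(\abs\alpha-d)(n-1)}2$, rather than some weaker bound. If the $p$-basis inversion becomes unwieldy, I would fall back on the Stanley--Stembridge style identity for the sum of coefficients $c_\lambda$ with a fixed largest part, and on connected-partition counts as in \cref{prop:Wolfgang}, to bound each $c_{(n-k,k)}$ from below.
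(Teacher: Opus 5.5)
Your first half is correct. By Stanley's sink theorem the left-hand side equals $\sum_{\ell(\lambda)=2}c_\lambda$. Your count of two-sink orientations of a tree, weighting each pair $\{x,y\}$ by $\operatorname{dist}(x,y)-1$, reproduces the four sums exactly. The paper quotes this theorem from Li without proof and describes Li's argument as a two-sink orientation count, so this half matches.

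The gap is in the second half. The per-$k$ bound you expect, $c_{(k,n-k)}\ge k(n-k)$ for $\lceil n/2\rceil\le k\le n-2$, is false for $e$-positive caterpillars. For $P_4=C(1,1)$ one has $X_{P_4}=4e_4+2e_{31}+2e_{22}$, so $c_{(2,2)}=2<4$. More generally, $c_{(k,k)}=k^2-k$ for $P_{2k}$.

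Carrying out your own $p$-basis comparison shows there is no inequality to extract at this stage. Any $e_\mu$ with $\ell(\mu)\ge 3$ contributes nothing to a $p_\nu$ with $\ell(\nu)\le 2$, so there is no sign argument to make. One gets $c_{(n)}=n$, and the $p_{(k,n-k)}$-coefficients give the exact identities
\[
c_{(k,n-k)}=k(n-k)N_{(k,n-k)}-n\quad(k>n/2),
\qquad
c_{(k,k)}=k^2N_{(k,k)}-k\quad(n=2k).
\]
Here $N_{(k,n-k)}$ is the number of edges whose removal leaves components of sizes $k$ and $n-k$. So $e$-positivity gives only $N_{(k,n-k)}\ge 1$, i.e.\ $c_{(k,n-k)}\ge k(n-k)-n$, and $N_{(k,n-k)}=1$ does occur.

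Likewise, $c_{(n-1,1)}=(n-1)\abs{\alpha}-n$ is not the source of $\frac{(\abs{\alpha}-d)(n-1)}{2}$. It exceeds that quantity by $n(n-3)/2$. This is exactly the total of the corrections $-n$ (or $-n/2$ when $n=2k$) in the two-part coefficients with $k\le n-2$.

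The fix is to pool these corrections rather than bound term by term.
\begin{itemize}
\item Summing the identities gives $\sum_{\ell(\lambda)=2}c_\lambda=\sum_{e}n_1(e)n_2(e)-\binom{n}{2}$, where $n_1(e),n_2(e)$ are the two component sizes. You can also read this off directly from your distance count, without the sink theorem.
\item The $\abs{\alpha}$ pendant edges contribute $(n-1)\abs{\alpha}$. Since $n=\abs{\alpha}+d$, we have $(n-1)\abs{\alpha}-\binom{n}{2}=\frac{(\abs{\alpha}-d)(n-1)}{2}$.
\item Each split $(k,n-k)$ with $2\le n-k\le k$ is realized by at least one non-pendant edge, because $c_{(k,n-k)}\ge 0$ forces $N_{(k,n-k)}\ge 1$ (equivalently, by \cref{prop:Wolfgang}). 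Distinct $k$ use distinct edges, so these edges contribute at least $\sum_{k=\lceil n/2\rceil}^{n-2}k(n-k)$.
\end{itemize}
In particular, the inequality follows from the existence of connected partitions of every two-part type. Your fallback to Stanley--Stembridge-type identities is therefore unnecessary.
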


An example that passes the inequality in \cref{thm:Li.caterpillar} is the CPET
caterpillar
\[
T_2=T(4;(3)\,\vert\,\epsilon;6)
=C(1,0,0,1,0,0,2,0,0,0,0,1),
\]
shown in \cref{fig:Li.examples}; see also~\citet[Fig.~4.1]{Li26X}.

\begin{figure}[ht]
\centering
\begin{tikzpicture}[x=0.6cm, y=0.8cm]
\draw[graph edge] (0,0)--(13,0);
\draw[graph edge] (4,0)--(4,0.9)
(6.5,0.9)--(7,0)--(7.5,0.9);
\foreach \i in {0,...,13}
\node[graph vertex] at (\i,0) {};
\foreach \p in {(4,0.9),(6.5,0.9),(7.5,0.9)}
\node[graph vertex] at \p {};
\node[below=4pt] at (4,0) {$u_1$};
\node[below=4pt] at (7,0) {$v_\ast$};
\end{tikzpicture}
\caption{The CPET caterpillar~$T_2$
which satisfies the inequality in \cref{thm:Li.caterpillar}.}
\label{fig:Li.examples}
\end{figure}

We use~$T_2$ to illustrate the specialization--cone method.
By \cref{lem:cone.nonadj},
the seed row~$\mathbf e_2^\top$ belongs to~$\cone{F_\varphi}$,
which is invariant under~$D_0=A^2$ and~$D_1=M_1 A^2$.
For the two sides of~$T_2$,
the transfer operations give
\[
r_{4,(3)}
=\mathbf e_2^\top D_0 D_1
=(143,46,71,38)F_\varphi/196
\qquad\text{and}\qquad
r_{6,\epsilon}
=\mathbf e_2^\top D_0^2
=(193,549,335,200)F_\varphi/784.
\]
Thus both side evaluation rows belong to~$\cone{F_\varphi}$.
Let~$\lambda$ and~$\mu$ denote the respective positive coefficient rows above.
By \cref{lem:transfer,lem:cone.nonadj},
\[
\varphi(X_{T_2})
=r_{4,(3)}W_\varphi r_{6,\epsilon}^\top
=\lambda(F_\varphi W_\varphi F_\varphi^\top)\mu^\top<0.
\]
Since $\varphi(e_j)\ge 0$ for every~$j\ge 1$,
this proves that~$T_2$ is not $e$-positive.

For the adjacent case in \cref{prop:adj},
take the smallest CPET member of the family in \cref{prop:CPET.Ga},
namely
\[
G_{10}=T(10;(1)\,\vert\,\epsilon;8)
=C(1,0^8,1,2,0^6,1),
\]
shown in \cref{fig:Li.adjacent}.
\cref{thm:Li.caterpillar} does not work for it since the inequality in reads $1449\ge 847$.

\begin{figure}[ht]
\centering
\begin{tikzpicture}[x=0.6cm, y=0.8cm]
\draw[graph edge] (0,0)--(19,0);
\draw[graph edge] (10,0)--(10,0.9)
(10.6,0.9)--(11,0)--(11.4,0.9);
\foreach \i in {0,...,19}
\node[graph vertex] at (\i,0) {};
\foreach \p in {(10,0.9),(10.6,0.9),(11.4,0.9)}
\node[graph vertex] at \p {};
\node[below=4pt] at (10,0) {$u_1$};
\node[below=4pt] at (11,0) {$v_\ast$};
\end{tikzpicture}
\caption{The CPET caterpillar~$G_{10}$
which satisfies the inequality in \cref{thm:Li.caterpillar}.}
\label{fig:Li.adjacent}
\end{figure}

We use~$G_{10}$ to illustrate the specialization--cone method.
The side factors are $\beta=1^{10}2$ and $\gamma=1^8$.
A connected partition of the type~$3^7 2$ used in the proof of \cref{prop:adj}
is given by
\[
w(G_{10})
=\underbrace{111\mid111\mid111\mid12}_{\beta}
\mid3\mid
\underbrace{111\mid111\mid11}_{\gamma}.
\]
In particular,
$\beta$ consists of four blocks of size~$3$.
By \cref{lem:coneH.adj},
$\delta\in\cone{H}$ and $\cone{H}A^2\subseteq\cone{H}$.
Thus there is a nonzero nonnegative row~$\nu$ such that
\[
r_{8,\epsilon}
=\delta A^2
=\mathbf e_8^\top
=\nu H
\qquad\text{and}\qquad
r_{10,(1)}
=r_{8,\epsilon}A^2M_1
=\nu HA^2M_1.
\]
Since~$HA^2M_1$ is a row block of~$J$,
we have $r_{10,(1)}\in\cone{J}$
and $HW_\psi(HA^2M_1)^\top<0$
by \cref{lem:coneH.adj}.
By \cref{lem:transfer} and the symmetry of~$W_\psi$,
\[
\psi(X_{G_{10}})
=r_{8,\epsilon}W_\psi r_{10,(1)}^\top
=\nu\brk[s]1{HW_\psi(HA^2M_1)^\top}\nu^\top<0.
\]
Explicit computation gives $\psi(X_{G_{10}})<-2$ indeed. 
Since $\psi(e_j)\ge 0$ for every~$j\ge 1$,
this proves that~$G_{10}$ is not $e$-positive.

\cref{thm:e-.caterpillar} reduces
\cref{thm:Li.caterpillar} to the following criterion.

\begin{corollary}
\label{cor:Delta=3.caterpillar}
Let $C(\alpha_1,\dots,\alpha_d)$ be an $e$-positive caterpillar
on~$n$ vertices with $d\ge 2$ nonleaf vertices, where
$\alpha_1,\alpha_d\in\{1,2\}$,
and $\alpha_i\in\{0,1\}$
for $2\le i\le d-1$.
Let~$v_{i_1},\dots,v_{i_r}$ be its trivalent vertices,
listed in increasing order of their indices,
where~$r\ge 0$.
Then $n=d+r+2$,
and
\[
\sum_{j=1}^r(i_j+j)(n-i_j-j)
\le
\dfrac{(n-1)(n-3)(n+4)}{12}
-\dfrac{n^2}{8}\mathbf 1(\text{$n$ is even}).
\]
\end{corollary}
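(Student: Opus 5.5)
By \cref{thm:e-.caterpillar}, an $e$-positive caterpillar has $\Delta\le 3$. So the plan is to take the inequality of \cref{thm:Li.caterpillar} and rewrite both sides explicitly under the restrictions $\alpha_1,\alpha_d\in\{1,2\}$ and $\alpha_i\in\{0,1\}$ for $1<i<d$.

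\textbf{Counting $n$.} A vertex $v_i$ is trivalent exactly when $\alpha_i=2$ for $i\in\{1,d\}$, and exactly when $\alpha_i=1$ for $1<i<d$. Writing $\alpha_i=\epsilon_i+t_i$, with $\epsilon_i=\mathbf 1(i\in\{1,d\})$ and $t_i=\mathbf 1(v_i\text{ trivalent})$, gives $\abs{\alpha}=r+2$ and hence $n=d+r+2$. (If $d=2$ the two endpoints are distinct, so there is no ambiguity.)

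\textbf{Rewriting the inequality.} I would substitute $\alpha_i=\epsilon_i+t_i$ into the left-hand side and expand. The pieces behave as follows.
\begin{itemize}
\item $\sum\binom{\alpha_i}{2}$ counts the trivalent endpoints.
\item The linear term $\sum_i\alpha_i\sum_{j\ne i}\abs{j-i}$ splits into the endpoint part $2\cdot\binom d2$ plus $\sum_j\bigl(\binom{i_j}{2}+\binom{d-i_j+1}{2}\bigr)$.
\item The double sum $\sum_{i<j}(j-i-1)$ is a pure polynomial in $d$.
\item The cross term $\sum_{i<j}\alpha_i\alpha_j(j-i+1)$ splits into three parts: an endpoint--endpoint part $d$; endpoint--trivalent parts, which are linear in each $i_j$; and a trivalent--trivalent part $\sum_{j<k}(i_k-i_j+1)$.
\end{itemize}
The key identity for the trivalent--trivalent part is
\[
\sum_{j<k}(i_k-i_j)=\sum_j (2j-r-1)\,i_j .
\]
This is what turns the quadratic pairwise interaction into terms that are linear in $i_j$ with coefficients depending on $j$. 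Combined with the quadratic terms $\binom{i_j}{2}+\binom{d-i_j+1}{2}$, it should assemble into $-\sum_j(i_j+j)(n-i_j-j)$ plus a polynomial in $n$ and $r$. Here $(i_j+j)(n-i_j-j)$ is naturally the product of the two component sizes obtained by cutting the spine edge just after $v_{i_j}$'s pendant leaf, which is a useful sanity check on the bookkeeping. The trivalent endpoints (where $\epsilon_i$ and $t_i$ are both $1$) need a separate check that they also fit this formula with $i_1=1$ or $i_r=d$.

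\textbf{The right-hand side.} Put $m=\abs{\alpha}+d=n$. Then $\sum_{k=\lceil m/2\rceil}^{m-2}k(m-k)$ is summed by splitting on the parity of $n$, and this is exactly where the correction $\frac{n^2}{8}\mathbf 1(n\text{ even})$ comes from: the middle term $k=n/2$ is counted once rather than paired. The second term is $(r+2-d)(n-1)/2$, which I would rewrite using $d=n-r-2$.

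Finally, I would move $\sum_j(i_j+j)(n-i_j-j)$ to one side and verify that all remaining terms collapse to $\frac{(n-1)(n-3)(n+4)}{12}$ after substituting $d=n-r-2$. I expect every dependence on $r$ other than through $n$ to cancel.

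\textbf{Main obstacle.} The main difficulty is the bookkeeping of the cross terms, and especially the endpoint cases $\alpha_1=2$ or $\alpha_d=2$. I would control this by first proving the identity for $r=0$ (a path with two pendant leaves, i.e.\ $P_n$-like) and for $r=1$ by direct computation. I would then show that inserting one additional trivalent vertex changes both sides by the same amount, namely $(i_j+j)(n-i_j-j)$ plus the appropriate change in $n$.
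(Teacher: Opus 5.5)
Your proposal is correct and follows the paper's route: the paper simply combines \cref{thm:e-.caterpillar} with \cref{thm:Li.caterpillar} and a routine computation. Your bookkeeping does close up: with $n=d+r+2$, the left side minus the right side of Li's inequality equals $\frac{(n-1)(n-3)(n+4)}{12}-\frac{n^2}{8}\mathbf 1(n\text{ even})-\sum_{j}(i_j+j)(n-i_j-j)$, the residual $r$-dependence cancels, and the trivalent endpoints are absorbed by $\sum_i\binom{\alpha_i}{2}$ as you anticipate. The only slip is your heuristic reading of $(i_j+j)(n-i_j-j)$ as an edge-cut product, which is off by one (deleting the edge $v_{i_j}v_{i_j+1}$ leaves $i_j+j+1$ vertices on the side of~$v_1$), but nothing in the argument uses it.
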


\begin{proof}
The result follows from \cref{thm:e-.caterpillar,thm:Li.caterpillar}
and a routine algebraic computation.
\end{proof}

\Cref{cor:Delta=3.caterpillar} is still a restriction when $\Delta=3$.
For example, the double broom $C(2,2)$ has $n=6$, $r=2$, and $(i_1,i_2)=(1,2)$,
but the two sides of the inequality above satisfy $2(6-2)+4(6-4)>8$.

\section*{Disclosure of artificial intelligence use}
\label{sec:AI}

We used successive OpenAI models,
including GPT-5.5,
GPT-5.6,
and GPT-6 Astra,
as well as OpenAI Codex, 
for mathematical exploration,
computation,
and manuscript preparation.

GPT-5.5 developed a specialization method establishing non-$e$-positivity
for some caterpillars with at most three trivalent vertices
in the nonadjacent case.
This initiated the specialization approach to
\cref{thm:e-.caterpillar}.
GPT-5.6 then completed the nonadjacent case.
It raised the idea of the specialization--cone method, 
and supplied explicit specializations,
cone generators,
and finite certificates.
The model later developed a proof of \cref{prop:adj}
using four specializations.
GPT-6 Astra developed the coloring proof of \cref{lem:transfer},
found the specialization~$\varphi$ and the cone~$\cone{F_\varphi}$ in \cref{sec:nonadj},
and successively simplified the proof of \cref{prop:adj}.
The coloring proof replaced the earlier argument using rooted pendant extensions.
An earlier version took the last entry of the second row of~$F_\varphi$ to be~$0$, 
causing that proof of \cref{prop:nonadj} twice long.
An intermediate proof of \cref{prop:adj} used a single specialization
with larger finite support and a four-cone certificate.
The current two-cone certificate reduces the number 
of strict entrywise matrix inequalities to be verified
from~$927,624$ to~$420$
and the number of exact row identities from~$6,192$ to~$30$.

Codex wrote and revised the self-contained verifier~\texttt{verify.py},
which checks the certificate in \cref{lem:coneH.adj}
using exact rational arithmetic.
It also assisted with the organization of the paper,
reference checking,
wording,
notation,
and \LaTeX\ source preparation.

The authors initiated the project,
derived the structural reduction in \cref{prop:T.abcd},
proposed studying $e$-coefficients indexed by partitions with parts~$2$ and~$3$,
and directed the division into the nonadjacent and adjacent cases.
They guided the mathematical exploration such like \cref{cor:infinite.CPET}, 
and simplification throughout,
independently verified every result,
and checked every deduction.
They take full responsibility for the accuracy and content of this paper.

\bibliography{csf-caterpillar}

@article{BF06,
  author =        {Barth, D. and Fournier, H.},
  journal =       {Discrete Math.},
  number =        {5},
  pages =         {469--477},
  title =         {A degree bound on decomposable trees},
  volume =        {306},
  year =          {2006},
}

@article{Bir67IC,
  author =        {Garrett Birkhoff},
  journal =       {Amer. Math. Monthly},
  number =        {3},
  pages =         {274--276},
  title =         {Linear transformations with invariant cones},
  volume =        {74},
  year =          {1967},
  doi =           {10.2307/2316020},
}

@article{BC18,
  author =        {Brosnan, Patrick and Chow, {\relax T.Y}},
  journal =       {Adv. Math.},
  pages =         {955--1001},
  title =         {Unit interval orders and the dot action on the
                   cohomology of regular semisimple {H}essenberg
                   varieties},
  volume =        {329},
  year =          {2018},
}

@article{CC24,
  author =        {Chan, William and Crew, Logan},
  journal =       {J. Graph Theory},
  number =        {4},
  pages =         {633--651},
  title =         {A graph polynomial from chromatic symmetric
                   functions},
  volume =        {105},
  year =          {2024},
}

@article{CHW26,
  author =        {Chen, {\relax L} and He, {\relax Y.T} and
                   Wang, {\relax D.G.L}},
  journal =       {Discrete Math.},
  pages =         {114723},
  title =         {Clocks are $e$-positive},
  volume =        {349},
  number =        {1},
  year =          {2026},
}

@article{CGMPW06,
  author =        {S. Cichacz and A. G{\"o}rlich and A. Marczyk and
                   J. Przyby{\l}o and M. Wo{\'z}niak},
  journal =       {Discuss. Math. Graph Theory},
  number =        {2},
  pages =         {291--305},
  title =         {Arbitrarily vertex decomposable caterpillars with
                   four or five leaves},
  volume =        {26},
  year =          {2006},
}

@article{CHS07,
  author =        {David Coudert and Florian Huc and
                   Jean-S{\'e}bastien Sereni},
  journal =       {J. Graph Theory},
  number =        {1},
  pages =         {27--41},
  title =         {Pathwidth of outerplanar graphs},
  volume =        {55},
  year =          {2007},
}

@article{DFv20,
  author =        {Samantha Dahlberg and Ang\`ele Foley and
                   Stephanie {van Willigenburg}},
  journal =       {J. Eur. Math. Soc. (JEMS)},
  number =        {8},
  pages =         {2673--2696},
  publisher =     {European Mathematical Society Publishing House},
  title =         {Resolving {S}tanley's $e$-positivity of
                   claw-contractible-free graphs},
  volume =        {22},
  year =          {2020},
}

@article{DSv20,
  author =        {Dahlberg, {\relax S} and She, {\relax A} and
                   {van Willigenburg}, {\relax S}},
  journal =       {Electron. J. Combin.},
  number =        {1},
  pages =         {\#P1.2},
  title =         {Schur and $e$-Positivity of Trees and Cut Vertices},
  volume =        {27},
  year =          {2020},
}

@article{FHM19,
  author =        {Foley, {\relax A.M} and Ho{\`a}ng, {\relax C.T} and
                   Merkel, {\relax O.D}},
  journal =       {Electron. J. Combin.},
  number =        {3},
  pages =         {\#P3.51},
  title =         {Classes of graphs with {$e$}-positive chromatic
                   symmetric function},
  volume =        {26},
  year =          {2019},
}

@article{GNRS24,
  author =        {Ganesan, A. and Narayanan, N. and Rao, {\relax B.V.R} and
                   Sawant, {\relax S.S}},
  journal =       {Discrete Math.},
  note =          {8 pp.},
  number =        {11},
  pages =         {114162},
  title =         {Proper {$q$}-caterpillars are distinguished by their
                   chromatic symmetric functions},
  volume =        {347},
  year =          {2024},
}

@article{Gas96,
  author =        {Gasharov, Vesselin},
  booktitle =     {Proceedings of the 6th {C}onference on {F}ormal
                   {P}ower {S}eries and {A}lgebraic {C}ombinatorics
                   ({N}ew {B}runswick, {NJ}, 1994)},
  journal =       {Discrete Math.},
  number =        {1--3},
  pages =         {193--197},
  title =         {Incomparability graphs of {$(3+1)$}-free posets are
                   {$s$}-positive},
  volume =        {157},
  year =          {1996},
}

@article{Hik24X,
  author =        {Hikita, {\relax T}},
  journal =       {J. Amer. Math. Soc.},
  note =          {Early View},
  title =         {A proof of the {Stanley--Stembridge} conjecture},
  year =          {2026},
  url =           {https://doi.org/10.1090/jams/1090},
}

@book{HJ13B,
  author =        {Horn, {\relax R.A} and Johnson, {\relax C.R}},
  edition =       {2nd},
  publisher =     {Camb. Univ. Press, Cambridge},
  title =         {Matrix Analysis},
  year =          {2013},
}

@article{HW08,
  author =        {M. Hor{\v{n}}{\'a}k and M. Wo{\'z}niak},
  journal =       {Discrete Math.},
  number =        {7},
  pages =         {1268--1281},
  title =         {On arbitrarily vertex decomposable trees},
  volume =        {308},
  year =          {2008},
}

@article{HJLOY21,
  author =        {B.-H. Hwang and W.-S. Jung and K.-J. Lee and J. Oh and
                   S.-H. Yu},
  journal =       {J. Combin. Theory Ser. B},
  pages =         {52--75},
  title =         {Acyclic orientation polynomials and the sink theorem
                   for chromatic symmetric functions},
  volume =        {149},
  year =          {2021},
}

@article{KS25,
  author =        {Keevash, P. and Staden, K.},
  journal =       {J. Eur. Math. Soc. (JEMS)},
  number =        {5},
  pages =         {1769--1826},
  title =         {{Ringel}'s tree packing conjecture in quasirandom
                   graphs},
  volume =        {27},
  year =          {2025},
}

@misc{Li26X,
  author =        {Li, {\relax E.Y.H}},
  howpublished =  {arXiv:2503.09484v2},
  title =         {A quantitative way to {$e$}-positivity of trees},
  year =          {2026},
}

@article{LLYZ25,
  author =        {Li, {\relax E.Y.H} and Li, {\relax G.M.X} and
                   Yang, {\relax A.L.B} and Zhang, {\relax Z.X}},
  journal =       {S\'{e}m. Lothar. Combin.},
  pages =         {Art. \#154, 11 pp.},
  title =         {Strongly nice property and {Schur} positivity of
                   graphs},
  volume =        {93B},
  year =          {2025},
}

@article{LQYZ25,
  author =        {Li, {\relax G.M.X} and Qiu, Dun and
                   Yang, {\relax A.L.B} and Zhang, Zhong-Xue},
  journal =       {SIAM J. Discrete Math.},
  number =        {4},
  pages =         {2250--2267},
  title =         {Stanley's conjecture on the {Schur} positivity of
                   distributive lattices},
  volume =        {39},
  year =          {2025},
}

@article{LS19,
  author =        {Martin Loebl and Jean-S{\'{e}}bastien Sereni},
  journal =       {Annales de l'Institut Henri Poincar\'e D},
  number =        {3},
  pages =         {357--384},
  title =         {Isomorphism of weighted trees and {S}tanley's
                   isomorphism conjecture for caterpillars},
  volume =        {6},
  year =          {2019},
}

@book{Lot97B,
  author =        {Lothaire, M.},
  edition =       {2nd},
  publisher =     {Camb. Univ. Press, Cambridge},
  series =        {Cambridge Mathematical Library},
  title =         {Combinatorics on words},
  year =          {1997},
}

@article{MMW08,
  author =        {Martin, {\relax J.L} and Morin, Matthew and
                   Wagner, {\relax J.D}},
  journal =       {J. Combin. Theory Ser. A},
  number =        {2},
  pages =         {237--253},
  title =         {On distinguishing trees by their chromatic symmetric
                   functions},
  volume =        {115},
  year =          {2008},
}

@article{SW16,
  author =        {Shareshian, {\relax J} and Wachs, {\relax M.L}},
  journal =       {Adv. Math.},
  pages =         {497--551},
  title =         {Chromatic quasisymmetric functions},
  volume =        {295},
  year =          {2016},
}

@article{Sta95,
  author =        {Stanley, {\relax R.P}},
  journal =       {Adv. Math.},
  number =        {1},
  pages =         {166--194},
  title =         {A symmetric function generalization of the chromatic
                   polynomial of a graph},
  volume =        {111},
  year =          {1995},
}

@article{Sta98,
  author =        {Stanley, {\relax R.P}},
  journal =       {Discrete Math.},
  number =        {1-3},
  pages =         {267--286},
  title =         {Graph colorings and related symmetric functions:
                   ideas and applications: a description of results,
                   interesting applications, \& notable open problems},
  volume =        {193},
  year =          {1998},
}

@article{SS93,
  author =        {Stanley, {\relax R.P} and Stembridge, {\relax J.R}},
  journal =       {J. Combin. Theory Ser. A},
  number =        {2},
  pages =         {261--279},
  title =         {On immanants of {J}acobi-{T}rudi matrices and
                   permutations with restricted position},
  volume =        {62},
  year =          {1993},
}

@article{Tom26,
  author =        {Tom, {\relax F}},
  journal =       {Adv. in Appl. Math.},
  pages =         {103044},
  title =         {Graphs missing a connected partition},
  volume =        {175},
  year =          {2026},
}

@article{TV26-vglue,
  author =        {Tom, {\relax F} and Vailaya, {\relax A}},
  journal =       {Electron. J. Combin.},
  number =        {2},
  pages =         {\#P2.9},
  title =         {The chromatic symmetric function of graphs glued at a
                   single vertex},
  volume =        {33},
  year =          {2026},
}

@article{WW20,
  author =        {Wang, {\relax D.G.L} and Wang, {\relax M.M.Y}},
  journal =       {Discrete Appl. Math.},
  pages =         {621--630},
  title =         {A combinatorial formula for the {S}chur coefficients
                   of chromatic symmetric functions},
  volume =        {285},
  year =          {2020},
}

@article{WW23-DAM,
  author =        {Wang, {\relax D.G.L} and Wang, {\relax M.M.Y}},
  journal =       {Discrete Appl. Math.},
  pages =         {226--240},
  title =         {The $e$-positivity and {Schur} positivity of some
                   spiders and broom trees},
  volume =        {325},
  year =          {2023},
}

@article{WW23-JAC,
  author =        {Wang, {\relax D.G.L} and Wang, {\relax M.M.Y}},
  journal =       {J. Algebraic Combin.},
  number =        {2},
  pages =         {495--514},
  title =         {The $e$-positivity of two classes of cycle-chord
                   graphs},
  volume =        {57},
  year =          {2023},
}

@article{WZ26,
  author =        {Wang, {\relax D.G.L} and Zhang, {\relax K}},
  journal =       {Discrete Appl. Math.},
  pages =         {1--18},
  title =         {Exact thresholds for {Schur} positivity of the
                   lattices $\mathbf m\times\mathbf 2$ and $\mathbf
                   m\times\mathbf 3$},
  volume =        {395},
  year =          {2026},
}

@phdthesis{Wol97D,
  author =        {Wolfgang, III, {\relax H.L}},
  school =        {Massachusetts Institute of Technology},
  title =         {Two Interactions between Combinatorics and
                   Representation Theory: Monomial Immanants and
                   Hochschild Cohomology},
  year =          {1997},
}

@article{Zhe22,
  author =        {Zheng, {\relax K}},
  journal =       {J. Combin. Theory Ser. A},
  pages =         {105608},
  title =         {On the $e$-positivity of trees and spiders},
  volume =        {189},
  year =          {2022},
}

\end{document}